\numberwithin{equation}{section}
\patchcmd{\ttlh@hang}{\parindent\z@}{\parindent\z@\leavevmode}{}{}
\patchcmd{\ttlh@hang}{\noindent}{}{}{}
\newcommand\numberthis{\addtocounter{equation}{1}\tag{\theequation}}
\providecommand*{\cupdot}{%
  \mathbin{%
    \mathpalette\@cupdot{}%
  }%
}
\newcommand*{\@cupdot}[2]{%
  \ooalign{%
    $\m@th#1\cup$\cr
    \sbox0{$#1\cup$}%
    \dimen@=\ht0 %
    \sbox0{$\m@th#1\cdot$}%
    \advance\dimen@ by -\ht0 %
    \dimen@=.5\dimen@
    \hidewidth\raise\dimen@\box0\hidewidth
  }%
}
\providecommand*{\bigcupdot}{%
  \mathop{%
    \vphantom{\bigcup}%
    \mathpalette\@bigcupdot{}%
  }%
}
\newcommand*{\@bigcupdot}[2]{%
  \ooalign{%
    $\m@th#1\bigcup$\cr
    \sbox0{$#1\bigcup$}%
    \dimen@=\ht0 %
    \advance\dimen@ by -\dp0 %
    \sbox0{\scalebox{2}{$\m@th#1\cdot$}}%
    \advance\dimen@ by -\ht0 %
    \dimen@=.5\dimen@
    \hidewidth\raise\dimen@\box0\hidewidth
  }%
}
\newtheorem{theorem}{Theorem}[section]
\newtheorem{lemma}[theorem]{Lemma}
\newtheorem{proposition}[theorem]{Proposition}
\newtheorem{corollary}[theorem]{Corollary}
\theoremstyle{definition}
\newtheorem{definition}[theorem]{Definition}
\theoremstyle{remark}
\newcommand{\lbdf}{D^-_0}
\newcommand{\ubdf}{D^+_0}
\DeclareMathOperator{\PW}{PW}
\newcommand{\Hpi}{\mathcal{H}_{\pi}}
\newcommand{\R}{\mathbb{R}}
\newcommand{\N}{\mathbb{N}}
\newcommand{\ve}{\varepsilon}
\newcommand{\lan}{\langle}
\newcommand{\ran}{\rangle}
\DeclareMathOperator{\diam}{diam}
\newcommand{\gf}{g_{\lambda}}
\newcommand{\df}{h_{\lambda}}
\title{Frame redundancy and Beurling density}
\author{Marcin Bownik}
\address{Department of Mathematics, University of Oregon, Eugene, OR 97403--1222, USA}
\email{mbownik@uoregon.edu}
\author{Jordy Timo van Velthoven}
\address{Faculty of Mathematics,
University of Vienna, 
Oskar-Morgenstern-Platz 1,
1090 Vienna, Austria}
\email{jordy-timo.van-velthoven@univie.ac.at}
\subjclass[2020]{42A65, 42C15, 42C40, 46B15}
\keywords{Beurling density, critical density, frame, overcompleteness, redundancy, reproducing kernels}
\begin{document}

\maketitle

\begin{abstract}
We show that the frame measure function of a frame in certain reproducing kernel Hilbert spaces on metric measure spaces is given by the reciprocal of the Beurling density of its index set. In addition, we show that each such frame with Beurling density greater than one contains a subframe with Beurling density arbitrary close to one. This confirms that the concept of frame measure function as introduced by Balan and Landau is a meaningful quantitative definition for the redundancy of a large class of infinite frames. In addition, it shows that the necessary density conditions for sampling in reproducing kernel Hilbert spaces obtained by F\"uhr, Gr\"ochenig, Haimi, Klotz and Romero are optimal. As an application, we also settle the open questions of the existence of frames near the critical density for exponential frames on unbounded sets and for nonlocalized Gabor frames. The techniques used in this paper combine a selector form of Weaver's conjecture  and various methods for quantifying the overcompleteness of frames.
\end{abstract}

\section{Introduction}
Let $\mathcal{H} \subseteq L^2 (\mathbb{R}^n)$ be a reproducing kernel Hilbert space, i.e., a closed subspace in which point evaluation is continuous, and let $\{k_x \}_{x \in \mathbb{R}^n}$ be the reproducing kernels in $\mathcal{H}$, so that $f(x) = \langle f, k_x \rangle$ for all $f \in \mathcal{H}$. A set $\Lambda \subseteq \mathbb{R}^n$ is called a \emph{sampling set} for $\mathcal{H} $ if the system $\{k_{\lambda} \}_{\lambda \in \Lambda}$ is a frame for $\mathcal{H}$, i.e., there exist $0 < A \leq B < \infty$ such that
\[
 A \| f \|^2 \leq \sum_{\lambda \in \Lambda} |\langle f, k_{\lambda} \rangle |^2 \leq B \| f \|^2 \quad \text{for all} \quad f \in \mathcal{H},
\]
and $\Lambda \subseteq \mathbb{R}^n$ is called an \emph{interpolation set} for $\mathcal{H}$ if $\{k_{\lambda} \}_{\lambda \in \Lambda}$ is a Riesz sequence in $\mathcal{H}$, i.e., there exist
$0 < A \leq B < \infty$ such that
\[
 A \| c \|^2 \leq \bigg\| \sum_{\lambda \in \Lambda} c_{\lambda} k_{\lambda} \bigg\|^2 \leq B \| c \|^2 \quad \text{for all} \quad c \in \ell^2(\Lambda).
\]
A set $\Lambda \subseteq \mathbb{R}^n$ is simultaneously a sampling set and an interpolation set for $\mathcal{H}$ if and only if the system of reproducing kernels $\{k_{\lambda} \}_{\lambda \in \Lambda}$ is a Riesz basis for $\mathcal{H}$.

The study of the existence of sampling sets and interpolation sets goes back to at least the fundamental work of Beurling \cite{beurling1966local, beurling1989collected} and Landau \cite{landau1967necessary} on classical Paley-Wiener spaces. In particular, the papers \cite{beurling1966local, beurling1989collected, landau1967necessary} provide necessary conditions for sampling sets and interpolation sets in terms of a notion of density of the set and identified a critical density that separates sampling sets from interpolation sets.
Since then, obtaining necessary density conditions for sampling sets and interpolation sets has been a topic of ongoing interest in various areas of mathematical analysis, such as complex analysis and harmonic analysis, see, e.g., \cite{lindholm2001sampling, marco2003interpolating, grochenig2019strict, seip2004interpolation, lev2016equidistribution} and \cite{grochenig2024necessary, balan2006density, enstad2025dynamical, olevskii2016functions, fuehr2017density} (and the references therein)
for a small selection of the relevant literature.

In the present paper, the existence of sampling sets with close to critical density will be shown in general reproducing kernel Hilbert spaces on metric measure spaces under common conditions on the metric measure space and the reproducing kernel. This allows us to answer various open questions on the existence of optimal sampling sets in particular settings in a unified manner.
Before stating the main results and describing the methods used in this paper, the essential background and context will be provided.

\subsection{Necessary density conditions}
We start by recalling the known necessary density conditions for general reproducing kernel Hilbert spaces as studied in \cite{fuehr2017density, mitkovsi2020density}. 
For simplicity, throughout this introduction, we restrict the discussion to reproducing kernel Hilbert spaces on Euclidean space equipped with Lebesgue measure and Euclidean metric. This setting covers already most of our key examples.

 Following \cite{fuehr2017density, mitkovsi2020density}, we make the following assumptions on the reproducing kernels $\{k_x\}_{x \in \mathbb{R}^n}$ of a reproducing kernel Hilbert space $\mathcal{H} \subseteq L^2 (\mathbb{R}^n)$. 
 First, we assume  $\{k_x\}_{x \in \mathbb{R}^n}$ satisfies the \emph{diagonal condition} (DC): There exist constants $0 < C_1 \leq C_2 < \infty$ such that
\begin{align} \label{eq:dc_intro} \tag{DC}
 C_1 \leq \| k_x \|^2 \leq C_2 \quad \text{for all} \quad x \in \mathbb{R}^n.
\end{align}
In addition, we assume that the system $\{k_x\}_{x \in \mathbb{R}^n}$ satisfies the \emph{weak localization} property (WL): For every $\varepsilon > 0$, there exists $r = r(\varepsilon) > 0$ such that
\begin{align} \label{eq:wl_intro} \tag{WL}
 \sup_{x \in \mathbb{R}^d} \int_{\mathbb{R}^n \setminus B_r (x)} |\langle k_x, k_y \rangle |^2 \; dy < \varepsilon^2.
\end{align}
Lastly, we will assume a discrete variant of the weak localization property, known as the \emph{homogeneous approximation property} (HAP): If $\Lambda \subseteq \mathbb{R}^n$ is a countable set such that $\{k_{\lambda} \}_{\lambda \in \Lambda}$ is a Bessel sequence in $\mathcal{H} \subseteq L^2 (\mathbb{R}^n)$ and $\varepsilon > 0$, then there exists $r = r(\varepsilon) > 0$ such that
\begin{align} \label{eq:hap_intro} \tag{HAP}
 \sup_{x \in \mathbb{R}^d} \sum_{\lambda \in \Lambda \setminus B_r (x)} |\langle k_x, k_{\lambda} \rangle |^2  < \varepsilon^2.
\end{align}
Under these assumptions, the following necessary density conditions for sampling sets and interpolation sets were shown in \cite[Corollary 4.1]{fuehr2017density}.

\begin{theorem}[\cite{fuehr2017density}] \label{thm:necessary_intro}
Let $\mathcal{H} \subseteq L^2 (\mathbb{R}^n)$ be a reproducing kernel Hilbert space with reproducing kernels $\{k_x \}_{x \in \mathbb{R}^n}$. Suppose that $\{k_x \}_{x \in \mathbb{R}^n}$ satisfies the diagonal condition \eqref{eq:dc_intro}, the weak localization \eqref{eq:wl_intro} and the homogeneous approximation property \eqref{eq:hap_intro}. Then the following assertions hold:
\begin{enumerate}
 \item[(i)] If $\{k_{\lambda} \}_{\lambda \in \Lambda}$ is a frame for $\mathcal{H}$, then
 \[
  D^-_0 (\Lambda) := \liminf_{r \to \infty} \inf_{x \in \mathbb{R}^n} \frac{\#\big(\Lambda \cap B_r (x) \big)}{\int_{B_r(x)} k(y,y) \; dy} \geq 1.
 \]

\item[(ii)] If $\{k_{\lambda} \}_{\lambda \in \Lambda}$ is a Riesz sequence in $\mathcal{H}$, then
 \[
  D^+_0 (\Lambda) := \limsup_{r \to \infty} \sup_{x \in \mathbb{R}^n} \frac{\#\big(\Lambda \cap B_r (x) \big)}{\int_{B_r(x)} k(y,y) \; dy} \leq 1.
 \]
\end{enumerate}
In particular, if $\{k_{\lambda} \}_{\lambda \in \Lambda}$ is a Riesz basis for $\mathcal{H}$, then $D_0^+(\Lambda) = D^-_0 (\Lambda) = 1$.
\end{theorem}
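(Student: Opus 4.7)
My plan is to adapt the Landau--Ramanathan--Steger comparison principle to the RKHS setting. The starting point is that $\mathcal{H}\subseteq L^2(\mathbb{R}^n)$ carries the canonical continuous Parseval identity $\int_{\mathbb{R}^n} k_y\otimes k_y\,dy = I_{\mathcal{H}}$, whose ball-localization
$$
T_{r,x} \;:=\; \int_{B_r(x)} k_y \otimes k_y \, dy
$$
is a positive trace-class operator satisfying $0\le T_{r,x}\le I$ and $\operatorname{tr}(T_{r,x})=\int_{B_r(x)} k(y,y)\,dy$---precisely the normalization in $D_0^\pm(\Lambda)$. Under \eqref{eq:dc_intro} and \eqref{eq:wl_intro} the operator $T_{r,x}$ is close to a projection: a direct calculation (using the reproducing property together with the continuous Parseval identity $k(y,y)=\int_{\mathbb{R}^n}|k(y,z)|^2\,dz$) gives
$$
\operatorname{tr}(T_{r,x}-T_{r,x}^2)=\int_{B_r(x)}\int_{\mathbb{R}^n\setminus B_r(x)}|k(y,z)|^2\,dz\,dy,
$$
which by \eqref{eq:wl_intro} is at most $\varepsilon^2|B_r(x)|+C_2\bigl|B_r(x)\setminus B_{r-R}(x)\bigr|$. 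For fixed $R=R(\varepsilon)$ the boundary layer contributes $O(R/r)\cdot|B_r(x)|$, so both terms are $o(\operatorname{tr}(T_{r,x}))$ as $r\to\infty$. Consequently the spectral projection $E_{r,x}$ of $T_{r,x}$ onto $[1/2,1]$ has rank asymptotic to $\operatorname{tr}(T_{r,x})$.

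For part (i), fix $\varepsilon>0$ and choose $R=R(\varepsilon)$ via \eqref{eq:hap_intro} so that $\sum_{\lambda\notin B_R(y)}|\langle k_y,k_\lambda\rangle|^2<\varepsilon^2$ uniformly in $y$. Take an orthonormal basis of $\operatorname{range}(E_{r,x})$, apply the lower frame bound, and split the resulting sum over $\Lambda$ into the interior part $\Lambda\cap B_{r+R}(x)$ and its tail. The tail is bounded by $\varepsilon^2|B_r(x)|$ by Fubini and \eqref{eq:hap_intro}, while the interior is dominated by $\#(\Lambda\cap B_{r+R}(x))$ via the Bessel upper bound together with \eqref{eq:dc_intro}. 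Rearranging, dividing by $\operatorname{tr}(T_{r,x})$, taking $\liminf_{r\to\infty}\inf_x$, and letting $\varepsilon\to 0$ yields $D_0^-(\Lambda)\ge 1$; the discrepancy between $r$ and $r+R$ disappears in the limit since $R$ is fixed. Part (ii) runs dually: the lower Riesz inequality forces the vectors $\{k_\lambda:\lambda\in\Lambda\cap B_{r-R}(x)\}$ to be essentially linearly independent when projected into $\operatorname{range}(E_{r,x})$ (up to an $\varepsilon$-error from \eqref{eq:hap_intro}), so their cardinality cannot exceed $(1+C\varepsilon)\operatorname{tr}(T_{r,x})+o(\operatorname{tr}(T_{r,x}))$, and hence $D_0^+(\Lambda)\le 1$.

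The main obstacle I anticipate is extracting the \emph{sharp} constant $1$, rather than a ratio like $A/C_2$. A naive operator inequality $T_{r,x}^{1/2}S_\Lambda T_{r,x}^{1/2}\ge A\,T_{r,x}$ combined with $\|k_\lambda\|^2\le C_2$ would only give $D_0^-(\Lambda)\ge A/C_2$. Restoring the critical value relies precisely on the plunge-region estimate above: on $\operatorname{range}(E_{r,x})$ the operator $T_{r,x}$ is close to the identity, so the frame essentially behaves like a Parseval system there and the constants $A,B,C_1,C_2$ all drop out in the limit $\varepsilon\to 0$. Making this quantitative purely from \eqref{eq:dc_intro}, \eqref{eq:wl_intro}, and \eqref{eq:hap_intro}---without any extra off-diagonal decay assumption on $k$---is the delicate technical step that ultimately produces the sharp density bound.
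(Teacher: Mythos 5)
Your setup---the concentration operator $T_{r,x}=\int_{B_r(x)}k_y\otimes k_y\,dy$, the identity $\operatorname{tr}(T_{r,x}-T_{r,x}^2)=\int_{B_r(x)}\int_{\mathbb{R}^n\setminus B_r(x)}|k(y,z)|^2\,dz\,dy$, and its smallness under \eqref{eq:dc_intro} and \eqref{eq:wl_intro}---is correct, and so is the conclusion that the spectral projection $E_{r,x}$ has rank asymptotic to $\operatorname{tr}(T_{r,x})$. But you have correctly diagnosed, and not resolved, the fatal gap: tracing the frame inequality over an orthonormal basis of $\operatorname{range}(E_{r,x})$ yields $A\,\operatorname{rank}(E_{r,x})\le \sum_{\lambda\in\Lambda\cap B_{r+R}(x)}\|E_{r,x}k_\lambda\|^2 + (\text{tail})\le C_2\,\#(\Lambda\cap B_{r+R}(x))+2\varepsilon^2|B_r(x)|$, which in the limit gives only $D_0^-(\Lambda)\ge A/C_2$. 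Your claim that ``the frame essentially behaves like a Parseval system on $\operatorname{range}(E_{r,x})$'' does not help: $T_{r,x}\approx I$ on that range is a statement about the \emph{continuous} system $\{k_y\}$, not about the Gram operator of $\{k_\lambda\}_{\lambda\in\Lambda}$, and the constants $A,B,C_1,C_2$ do not cancel. The alternative dimension-counting route (show the evaluation map $f\mapsto (f(\lambda))_{\lambda\in\Lambda\cap B_{r+R}(x)}$ is injective on $\operatorname{range}(E_{r,x})$) would deliver the sharp bound, but this needs a \emph{per-function} estimate $\sum_{\lambda\notin B_{r+R}(x)}|f(\lambda)|^2<A\|f\|^2$ for each unit $f\in\operatorname{range}(E_{r,x})$; the hypotheses \eqref{eq:hap_intro} and \eqref{eq:wl_intro} only deliver the trace bound $\sum_{\lambda\notin B_{r+R}(x)}\|E_{r,x}k_\lambda\|^2\lesssim\varepsilon^2|B_r(x)|$, which is compatible with a single bad eigenvector.

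The paper's route to part (i)---via Theorem~\ref{thm:framemeasure_rkhs} and the remark that it implies part (i)---avoids both problems by introducing the canonical dual frame $\{h_\lambda\}=\{S_\Lambda^{-1}k_\lambda\}$ and exploiting the exact identity $\sum_\lambda k_\lambda(y)\overline{h_\lambda(y)}=k(y,y)$, which holds because $\{k_\lambda\}$ and $\{h_\lambda\}$ are biorthogonal reconstructing systems. Integrating this over $B_r(x)$ and comparing with $\sum_{\lambda\in\Lambda\cap B_r(x)}\langle k_\lambda,h_\lambda\rangle$, with the annular error controlled by \eqref{eq:wl} and \eqref{eq:hap} and \Cref{lem:wad}, gives $\int_{B_r(x)}k(y,y)\,dy\approx\sum_{\lambda\in\Lambda\cap B_r(x)}\langle k_\lambda,h_\lambda\rangle\le\#(\Lambda\cap B_r(x))$, where the last inequality $\langle k_\lambda,h_\lambda\rangle=\|S_\Lambda^{-1/2}k_\lambda\|^2\le 1$ is sharp and independent of $A,B,C_1,C_2$. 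That biorthogonality input is precisely the ingredient your plan is missing; without it the Landau plunge-region machinery by itself does not yield the critical constant $1$ at this level of generality.
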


The densities $D^{-}_0 (\Lambda)$ and $D^+_0(\Lambda)$ of a set $\Lambda \subseteq \mathbb{R}^n$ appearing in Theorem \ref{thm:necessary_intro} are sometimes referred to as the \emph{dimension-free versions} of the \emph{lower Beurling density} and \emph{upper Beurling density}, and they can be interpreted as Beurling densities with respect to the weighted measure $d\mu(y) = k(y,y) dy$. For reproducing kernels satisfying the diagonal condition \eqref{eq:dc} with $C_1 = C_2$, the densities $D_0^{\pm}$ coincide (up to a scalar multiple) with the standard Beurling densities $D^{\pm}$ with respect to the Lebesgue measure. For reproducing kernels whose norms are not constant, however, the dimension-free densities appear most appropriate; see also \cite[Remark 2.2]{grochenig2019strict}.

\subsection{Frames near the critical density} \label{sec:frame_intro}
Theorem \ref{thm:necessary_intro} provides a critical density that separates sampling sets from interpolation sets. It is therefore meaningful to aim for establishing optimal sampling sets, in the sense that their density is arbitrary close to the critical density $1$. So far, this has only been  established for particular examples, such as exponential frames on compact sets \cite{bownik2025on, agora2015multi, marzo2006riesz}, localized Gabor frames \cite{balan2011redundancy} and weighted Fock spaces of entire functions \cite{grochenig2019strict}; see also \cite{grochenig2018sampling, below2023gabor} for various sufficient density conditions for Gabor frames and \cite{olevskii2008universal, matei2009variant} for universal sampling sets for exponential systems. It is worth mentioning
that these known results on optimal sampling sets are obtained using arguments that are particular for the underlying setting, and hence appear to not generalize to the setting of Theorem \ref{thm:necessary_intro}.
As a matter of fact, the existence of optimal sampling sets is mentioned as an open question for spectral subspaces of elliptic differential operators in \cite[p. 590]{grochenig2024necessary} and for coherent state subsystems arising from nilpotent Lie groups as \cite[Question 11]{velthoven2024density}.

Our first main result shows the existence of optimal sampling sets in the general setting of Theorem \ref{thm:necessary_intro}. The precise statement is as follows:

\begin{theorem} \label{thm:main_intro}
 Let $\mathcal{H} \subseteq L^2 (\mathbb{R}^n)$ be a reproducing kernel Hilbert space with reproducing kernels $\{k_x \}_{x \in \mathbb{R}^n}$. Suppose that $\{k_x \}_{x \in \mathbb{R}^n}$ satisfies the diagonal condition \eqref{eq:dc_intro}, the weak localization \eqref{eq:wl_intro} and the homogeneous approximation property \eqref{eq:hap_intro}.

 Then, for every $\varepsilon > 0$, there exists $\Lambda \subseteq \mathbb{R}^n$ satisfying
 \[
  D^+_0 (\Lambda) \leq 1 + \varepsilon
 \]
 and such that $\{k_{\lambda} \}_{\lambda \in \Lambda}$ is a frame for $\mathcal{H}$.
\end{theorem}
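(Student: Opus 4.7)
The strategy is to first exhibit \emph{some} frame $\{k_\lambda\}_{\lambda\in\Lambda_0}$ for $\mathcal{H}$ with large Beurling density, and then iteratively \emph{thin} $\Lambda_0$ down to a subset $\Lambda$ with $D_0^-(\Lambda)\le 1+\varepsilon$ while preserving the frame property. Existence of an initial frame is easy: taking a sufficiently fine grid $\Lambda_0=\alpha\mathbb{Z}^n$, the diagonal condition \eqref{eq:dc_intro} and weak localization \eqref{eq:wl_intro} provide a Bessel bound via Schur-type estimates, while a Riemann-sum argument based on the reproducing identity $f(x)=\langle f,k_x\rangle$ furnishes a lower frame bound once $\alpha$ is small enough. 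The density $D_0^-(\Lambda_0)$ can be made arbitrarily large by shrinking $\alpha$.

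The heart of the proof is a \emph{subset-selection lemma}: whenever $\{k_\lambda\}_{\lambda\in\Lambda}$ is a frame with $D_0^-(\Lambda)>1$ and $\eta>0$, there exists $\Lambda'\subsetneq\Lambda$ such that $\{k_\lambda\}_{\lambda\in\Lambda'}$ is still a frame and $D_0^-(\Lambda')\le(1-c)D_0^-(\Lambda)+\eta$ for some controllable $c>0$. I would prove it by partitioning $\mathbb{R}^n$ into large cubes $\{Q_j\}$ of side length $R=R(\eta)$ chosen so that the tails in \eqref{eq:hap_intro} and \eqref{eq:wl_intro} outside a cube contribute less than $\eta^2$, and by applying Weaver's selector theorem (the Marcus--Spielman--Srivastava solution to Kadison--Singer, in its selector form) to each finite family $\{k_\lambda\}_{\lambda\in\Lambda\cap Q_j}$. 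Under \eqref{eq:dc_intro} each $k_\lambda$ has uniformly bounded norm, and after rescaling by the local frame operator, the selector removes a definite fraction of the indices inside $Q_j$ while keeping the local lower frame inequality, up to an $O(\eta)$ loss. The union over $j$ of the surviving indices forms $\Lambda'$, and \eqref{eq:hap_intro} is then used to glue the cube-wise lower frame bounds into a global lower frame bound on $\mathcal{H}$.

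Iterating the subset-selection lemma a finite number of times---density decays geometrically by a factor of $1-c$ at each step---forces $D_0^-$ into $(1,1+\varepsilon]$ after $O(\log D_0^-(\Lambda_0))$ steps. Keeping track of the surviving lower frame constants (whose losses form a geometric series with small ratio) guarantees that the final set $\Lambda$ is a frame index set. By Theorem \ref{thm:necessary_intro}(i) we automatically have $D_0^-(\Lambda)\ge 1$, so the resulting density lies in $[1,1+\varepsilon]$, as required.

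The main technical obstacle will be the local-to-global patching. Weaver's selector theorem is a finite-dimensional, finite-family statement, whereas $D_0^-$ is an asymptotic, uniform-over-$\mathbb{R}^n$ quantity. The delicate issues are: (i) choosing the Weaver parameters (selector ratio and admissible per-vector Bessel bound) \emph{uniformly} across infinitely many cubes, which uses \eqref{eq:dc_intro} together with the uniform local Bessel bounds coming from \eqref{eq:wl_intro}; (ii) using \eqref{eq:hap_intro} to control cross-cube interactions so that cube-wise frame inequalities sum to a global frame inequality on $\mathcal{H}$; and (iii) ensuring that the density reduction is uniform in every cube, so that $D_0^-$, which is a $\liminf_r\inf_x$ quantity, actually drops uniformly in every spatial direction rather than only on average.
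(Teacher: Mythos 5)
Your high-level strategy --- iterate a subset-removal step based on a selector form of Weaver's theorem, reducing the density by a definite factor each time --- matches the paper's. But there are two substantive gaps.

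First, you have not identified \emph{which} subset of $\Lambda$ to apply the selector to, and this is the crux of the argument. Weaver's selector theorem can only produce a Bessel bound strictly below $1$ if the per-vector norms are bounded strictly below $1$. For the Parseval frame $\{S_\Lambda^{-1/2}k_\lambda\}$ there is no \emph{a priori} reason that $\|S_\Lambda^{-1/2}k_\lambda\|^2 = \langle k_\lambda, h_\lambda\rangle$ is small; in fact these numbers can be arbitrarily close to $1$. The paper resolves this by first proving the frame measure/density equality $M^+(\{k_\lambda\}) = 1/D_0^-(\Lambda)$ (Theorem \ref{thm:framemeasure_rkhs}), which is where \eqref{eq:wl_intro} and \eqref{eq:hap_intro} are actually used. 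Once $D_0^-(\Lambda) > 1+\varepsilon$ it follows that the running averages of $\langle k_\lambda, h_\lambda\rangle$ are bounded below $1/(1+\varepsilon)$, and one can then show (Lemma \ref{lem:near_critical}) that the set $\Lambda_\alpha := \{\lambda : \langle k_\lambda, h_\lambda\rangle \le \alpha\}$ has positive density $D_0^-(\Lambda_\alpha)\gtrsim \varepsilon\, D_0^-(\Lambda)$ for $\alpha = 1/(1+\varepsilon/2)$. Only the vectors indexed by $\Lambda_\alpha$ are fed into the selector theorem. Your sketch never produces this set or the uniform norm bound $\alpha<1$ it provides, and the diagonal condition \eqref{eq:dc_intro} alone will not give it.

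Second, your cube-by-cube application of Weaver and the subsequent local-to-global patching is not what the paper does, and the difficulties you yourself flag under (i)--(iii) are real obstructions, not loose ends. Applying Weaver inside each cube $Q_j$ controls only the finite-rank operator $\sum_{\lambda\in\Lambda\cap Q_j}\langle\cdot,k_\lambda\rangle k_\lambda$, and summing cube-wise lower bounds does not yield a global lower frame bound for $\mathcal H$ --- \eqref{eq:hap_intro} controls tails, but it does not convert a family of local two-sided inequalities (with respect to local frame operators of varying size) into a global one. The paper sidesteps this entirely: Theorem \ref{thm:selector} (\cite[Theorem 3.1]{bownik2024selector}) is a selector theorem for \emph{infinite} Bessel sequences in a separable Hilbert space. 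The partition $\{J_k\}$ of $\Lambda_\alpha$ into spatially local blocks of size $\ge r$ is used only to (a) make the selector's Bessel bound $(1/\sqrt r + \sqrt\alpha)^2<1$ small, and (b) ensure that exactly one point is removed from each block, so that the removed set $J$ has positive uniform density and the surviving density $D_0^+$ drops by a factor $\delta<1$ in every ball, not just on average (Step 2 of Proposition \ref{prop:main}). The frame property of $\{k_\lambda\}_{\lambda\in\Lambda\setminus J}$ is then a one-line consequence of subtracting a Bessel system with bound $<1$ from a Parseval frame, done globally in $\mathcal H$ with no patching of local inequalities at all. In short: Weaver should be applied once, globally, to $\{S_\Lambda^{-1/2}k_\lambda\}_{\lambda\in\Lambda_\alpha}$, not cube-by-cube.

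One smaller remark: your initial step of producing a frame $\{k_\lambda\}_{\lambda\in\Lambda_0}$ via a fine grid with a Riemann-sum argument is plausible but uses the structure of $\mathbb R^n$; the paper instead invokes the continuous-to-discrete result of \cite{freeman2019discretization} (Theorem \ref{thm:discretization}), which discretizes the continuous Parseval frame $\{k_x\}_{x\in\mathbb R^n}$ directly, independently of any lattice structure, and works verbatim in the metric-measure-space setting of the general Theorem \ref{thm:main}.
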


Theorem \ref{thm:main_intro} is a special case of a more general theorem (\Cref{thm:main}) on metric measure spaces. These results solve, in particular, the aforementioned questions mentioned in \cite{grochenig2024necessary, velthoven2024density} on the existence of frames near the critical density for spectral subspaces of elliptic differential operators (cf. \Cref{sec:elliptic}) and coherent frames arising from nilpotent Lie groups (cf. \Cref{sec:coherent}). In addition, Theorem \ref{thm:main_intro} recovers the known results on frames near the critical density for exponential frames on compact sets \cite{bownik2025on, agora2015multi, marzo2006riesz} (cf. \Cref{sec:exponential}), localized Gabor frames \cite{balan2011redundancy} (cf. \Cref{sec:gabor}) and weighted Fock spaces of entire functions \cite{grochenig2019strict} (cf. \Cref{sec:holomorphic}) in a unified manner.

In addition to \Cref{thm:main_intro}, we also show the existence of frames near the critical density in two settings in which the homogeneous approximation property \eqref{eq:hap_intro} seems not applicable directly in general. For both settings, we can phrase the result simply using the upper Beurling density with respect to Lebesgue measure:
\[
D^+ (\Lambda) := \limsup_{r \to \infty} \sup_{x \in \mathbb{R}^n} \frac{\#(\Lambda \cap B_r (x))}{|B_r(x)|},
\]
where we denote by $|U|$ the Lebesgue measure of a measurable set $U \subseteq \mathbb{R}^n$.

The first of these settings is that of exponential frames on possibly unbounded sets:

\begin{theorem} \label{thm:exponential_intro}
 Let $\Omega \subseteq \mathbb{R}^d$ be a set of finite measure. For every $\varepsilon > 0$, there exists $\Lambda \subseteq \mathbb{R}^d$ satisfying
 \[
  D^+ (\Lambda)  \leq (1+\varepsilon) |\Omega|
 \]
and such that $\{e^{2\pi i \lambda \cdot} \mathds{1}_{\Omega} \}_{\lambda \in \Lambda}$ is a frame for $L^2 (\Omega)$.
\end{theorem}

The existence of exponential frames on possibly unbounded sets was shown in \cite{nitzan2016exponential}; see also \cite{freeman2019discretization}. Theorem \ref{thm:exponential} improves this existence result by showing that such frames can even be chosen near the critical density. Although for certain sets $\Omega \subseteq \mathbb{R}^d$, it is known that Riesz bases of exponentials exist \cite{kozma2015combining, kozma2016combining, debernardi2022riesz}, not every set of finite measure $\Omega \subseteq \mathbb{R}$ admits a Riesz basis \cite{kozma2023set}. The recent paper \cite{enstad2025exponential} shows that such a set does not even admit a frame at the critical density $D^-(\Lambda) = |\Omega|$, which shows that \Cref{thm:exponential_intro} is optimal.

In addition to the mere existence of exponential frames near the critical density as asserted in \Cref{thm:exponential_intro}, we also show that such frames can be chosen with ``good'' frame bounds, which extends the main result of \cite{bownik2025on} from compact sets to general sets of finite measure. See \Cref{thm:exponential} for a precise statement.

Lastly, we state the following theorem on Gabor frames, which does not require any localization assumption on the generating window function.

\begin{theorem} \label{thm:gabor_intro}
Let $g \in L^2 (\mathbb{R}^d)$ be nonzero. For every $\varepsilon > 0$, there exists  $\Lambda \subseteq \mathbb{R}^{d} \times \mathbb{R}^d$ satisfying
\[
 D^+ (\Lambda) \leq 1 + \varepsilon
\]
and such that $\big\{ e^{2\pi i \lambda_1 \cdot} g(\cdot - \lambda_2) \big\}_{(\lambda_1, \lambda_2) \in \Lambda}$ is a frame for $L^2 (\mathbb{R}^d)$.
\end{theorem}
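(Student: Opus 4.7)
The plan is to reduce the Gabor frame question to a sampling problem in a reproducing kernel Hilbert space and then apply the paper's Weaver-type selector machinery directly, bypassing the discrete localization property \eqref{eq:hap_intro} which in general fails without regularity assumptions on $g$. Writing $\pi(\lambda)g := e^{2\pi i \lambda_1 \cdot}g(\cdot - \lambda_2)$ for $\lambda=(\lambda_1,\lambda_2)\in\R^{2d}$, and letting $V_g \colon L^2(\R^d)\to L^2(\R^{2d})$ denote the short-time Fourier transform $V_g f(\lambda)=\langle f,\pi(\lambda)g\rangle$, the map $V_g$ is $\|g\|$ times an isometry, its range $\mathcal{H}_g := V_g(L^2(\R^d))$ is a reproducing kernel Hilbert space with kernels $k_\lambda = \|g\|^{-2}V_g(\pi(\lambda)g)$, and $\{\pi(\lambda)g\}_{\lambda\in\Lambda}$ is a frame for $L^2(\R^d)$ if and only if $\{k_\lambda\}_{\lambda\in\Lambda}$ is a frame for $\mathcal{H}_g$. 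A direct computation gives $\|k_\lambda\|^2 = 1$ (so \eqref{eq:dc_intro} holds) and, by the translation invariance of the STFT modulus together with $V_g g \in L^2(\R^{2d})$,
\[
 \int_{\R^{2d}\setminus B_r(\lambda)}|\langle k_\lambda,k_\mu\rangle|^2\,d\mu \;=\; \|g\|^{-4}\int_{|\xi|\geq r}|V_g g(\xi)|^2\,d\xi \;\longrightarrow\; 0 \quad (r\to\infty),
\]
uniformly in $\lambda$, so \eqref{eq:wl_intro} also holds. However, the discrete analogue \eqref{eq:hap_intro} may fail for a general $g \in L^2(\R^d)$, so \Cref{thm:main_intro} cannot be invoked directly.

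Next, I would produce a starting Gabor frame $\{\pi(\lambda)g\}_{\lambda\in\Lambda_0}$ with $D^+(\Lambda_0) < \infty$, typically with $D^+(\Lambda_0)$ rather large. Such a $\Lambda_0$ can be obtained by discretizing the continuous resolution of the identity $\|g\|^2\|f\|^2 = \int_{\R^{2d}}|V_g f|^2$ on a sufficiently fine uniformly separated set via a bounded partition of unity on small cubes, using $V_g g \in L^2(\R^{2d})$ to control off-diagonal terms, in the spirit of general continuous-frame discretization results. I would then apply the paper's selector form of Weaver's conjecture inside $\mathcal{H}_g$ to partition $\Lambda_0 = \bigcupdot_{j=1}^N \Lambda_j$ into $N$ disjoint pieces, each again indexing a frame for $\mathcal{H}_g$, with $N$ chosen arbitrarily close to the frame redundancy of $\{k_\lambda\}_{\lambda \in \Lambda_0}$. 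Choosing $N \geq D^+(\Lambda_0)/(1+\varepsilon)$ and applying a pigeonhole on the lower Beurling densities of the pieces produces some index $j$ with
\[
 D^-(\Lambda_j) \;\leq\; \frac{D^+(\Lambda_0)}{N} \;\leq\; 1+\varepsilon,
\]
so $\Lambda := \Lambda_j$ satisfies the conclusion.

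The main obstacle is executing this partition step without \eqref{eq:hap_intro}, i.e., ensuring that each piece of the partition remains a sub-frame of $\mathcal{H}_g$ rather than merely a Bessel sub-sequence. In the framework of \Cref{thm:main_intro}, the homogeneous approximation property is precisely what allows one to pass from ball-by-ball counting information to global lower frame bounds; in the Gabor setting, one must substitute for \eqref{eq:hap_intro} the integrated $L^2$-decay of $V_g g$ and redo the frame-bound bookkeeping inside the Weaver argument. I expect this adaptation to form the technical heart of the proof, paralleling the Balan--Landau type redundancy arguments driving the more general results of the paper.
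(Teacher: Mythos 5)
Your reduction to a reproducing kernel Hilbert space is the right first move, and your observation that the homogeneous approximation property \eqref{eq:hap_intro} may genuinely fail for the space $\mathcal{H}_g = V_g(L^2(\mathbb{R}^d))$ built from a general $g \in L^2$ is correct. However, at exactly that point you miss the paper's crucial device, and the route you sketch to compensate does not close.

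The paper does not work in $\mathcal{H}_g$ at all. It fixes the normalized Gaussian $\phi_0 = 2^{d/4}e^{-\pi(\cdot)^2}$ and works in $\mathcal{H} := V_{\phi_0}(L^2(\mathbb{R}^d))$, viewing $\{g_z\}_{z\in\mathbb{R}^{2d}} := \{V_{\phi_0}\pi(z)g\}_{z\in\mathbb{R}^{2d}}$ as a system of vectors in $\mathcal{H}$ that is \emph{not} a system of reproducing kernels. This is admissible because \Cref{thm:framemeasure_rkhs} and \Cref{thm:main} are formulated for arbitrary systems $\{g_x\}_{x\in X}$ in a fixed ambient RKHS, not only for the kernels themselves. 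The payoff is that the HAP holds: since $\mathbb{R}^{2d}$ is abelian and $\phi_0 \in \mathcal{B}^2_\pi$ (the Gaussian has subexponential STFT decay), the second part of \Cref{lem:hap_coherent} gives $MV_{\phi_0}g \in L^2(\mathbb{R}^{2d})$ and hence \eqref{eq:hap} for the system $\{V_{\phi_0}\pi(z)g\}_z$, for \emph{every} $g \in L^2(\mathbb{R}^d)$. This is what restores the frame measure/density formulae \eqref{eq:frd}, after which \Cref{thm:main} finishes the job. Your plan to ``substitute for \eqref{eq:hap_intro} the integrated $L^2$-decay of $V_g g$ and redo the frame-bound bookkeeping'' is not a proof; the identities \eqref{eq:frd} are the engine driving \Cref{prop:main} and \Cref{thm:main0}, and they are derived from \eqref{eq:hap} in \Cref{thm:framemeasure_rkhs}. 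Without an actual replacement argument, the gap remains.

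A second issue is your use of the selector theorem. You propose to ``partition $\Lambda_0$ into $N$ disjoint pieces, each again a frame'' and then pigeonhole on densities. \Cref{thm:selector} does not produce such a partition: it extracts a single selector subset $J$ that is a Bessel sequence with a controlled bound. The paper's mechanism in \Cref{prop:main} is to pass to the Parseval frame $\{S_\Lambda^{-1/2}g_\lambda\}$, apply the selector theorem to the low-correlation subset $\Lambda_\alpha$ to obtain a set $\Lambda'$ of positive lower density whose \emph{removal} leaves a frame (because the selector's Bessel bound is strictly less than $1$), and then iterate this removal a bounded number of times. Your pigeonhole on $\sum_j D^-(\Lambda_j) \leq D^+(\Lambda_0)$ is valid arithmetic, but the antecedent partition into $N$ sub-frames is not supplied by any result in the paper and would need its own proof.

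In short: the missing idea is the change of window from $g$ to the Gaussian $\phi_0$, which makes \eqref{eq:hap} available via \Cref{lem:hap_coherent}, and the mechanism is iterated density reduction via \Cref{prop:main} rather than a one-shot Weaver-type partition.
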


In \cite[Theorem 1.2]{balan2011redundancy}, the existence of Gabor frames near the critical density was shown under the assumption that the generating window belongs to the modulation space $M^1 (\mathbb{R}^d)$. Theorem \ref{thm:gabor_intro} extends this result to arbitrary nonzero functions in $L^2 (\mathbb{R}^d)$. It is worth mentioning that for a general nonzero  $g \in L^2 (\mathbb{R}^d)$, there might not exist $\Lambda \subseteq \mathbb{R}^{d} \times \mathbb{R}^d$ with $D^-(\Lambda) = 1$ that yields a Gabor frame $\big\{ e^{2\pi i \lambda_1 \cdot} g(\cdot - \lambda_2) \big\}_{(\lambda_1, \lambda_2) \in \Lambda}$
for $L^2 (\mathbb{R}^d)$. Indeed, the Balian-Low theorem  asserts that for a frame $\big\{ e^{2\pi i \lambda_1 \cdot} g(\cdot - \lambda_2) \big\}_{(\lambda_1, \lambda_2) \in \Lambda}$ with Schwarz function $g \in \mathcal{S}(\mathbb{R}^d)$, the necessary density condition $D^- (\Lambda) \geq 1$ must be a strict inequality, see, e.g., \cite[Theorem 1.5]{ascensi2014dilation}. Hence, \Cref{thm:gabor_intro} is optimal, in the sense that frames at the critical density can in general not be expected.

\subsection{Frame redundancy}
With notation as in \Cref{thm:necessary_intro} and \Cref{thm:main_intro}, a key ingredient in our proof of \Cref{thm:main_intro} is formed by the following identities involving a frame $\{k_{\lambda} \}_{\lambda \in \Lambda}$ and its canonical dual frame $\{h_{\lambda} \}_{\lambda \in \Lambda}$:
\begin{align} \label{eq:frd_intro1}
M^- (\{k_{\lambda} \}_{\lambda \in \Lambda}) := \liminf_{r \to \infty} \inf_{x \in \mathbb{R}^n} \frac{1}{\#(\Lambda \cap B_r (x))} \sum_{\lambda \in \Lambda \cap B_r(x)} \langle k_{\lambda} , h_{\lambda} \rangle = \frac{1}{D^+_0 (\Lambda)}
\end{align}
and \begin{align} \label{eq:frd_intro2}
M^+ (\{k_{\lambda} \}_{\lambda \in \Lambda}) := \limsup_{r \to \infty} \sup_{x \in \mathbb{R}^n} \frac{1}{\#(\Lambda \cap B_r (x))} \sum_{\lambda \in \Lambda \cap B_r(x)} \langle k_{\lambda} , h_{\lambda} \rangle = \frac{1}{D^-_0 (\Lambda)}.
\end{align}
The quantities $0 \leq M^{\pm} (\{k_{\lambda} \}_{\lambda \in \Lambda}) \leq 1$ are special cases of the so-called (ultra-filter) \emph{frame measure function} of a frame that is studied in detail in \cite{balan2007measure} with the aim of quanitifying the redundancy of an infinite frame; see also \cite{balan2006density, balan2011redundancy}. The identities \eqref{eq:frd_intro1} and \eqref{eq:frd_intro2} relating the frame measure function and Beurling density have been proven for Gabor frames in \cite{balan2006density2} and for more general coherent frames in \cite{caspers2023overcompleteness}. In \Cref{thm:framemeasure_rkhs} we prove the identities \eqref{eq:frd_intro1} and \eqref{eq:frd_intro2} under the same assumptions as in Theorems \ref{thm:necessary_intro} and \ref{thm:main_intro}. We show this through a variation on the technique that is used in \cite{fuehr2017density, mitkovsi2020density} for proving the necessary density conditions for sampling and interpolation sets. Note that, in particular, the identity \eqref{eq:frd_intro2} implies the sampling part (i) of \Cref{thm:necessary_intro}.

In \cite{balan2007measure}, it is proposed that the reciprocal of the frame measure function $M^{\pm}$ be the redundancy for an infinite frame and is shown to be a quantitatively meaningful notion for the definition of the redundancy of an infinite frame, cf. \cite[Section 10]{balan2007measure} (see also \cite[p. 110]{balan2006density}).
For example, with this definition, any frame has redundancy greater than or equal to $1$, any Riesz basis has redundancy precisely $1$, and the redundancy is additive on finite unions of frames.
Nevertheless, an important question (cf. \cite[p. 673]{balan2007measure}) that is unanswered in general is whether any frame with redundancy greater than $1$ contains a subframe that is still a frame for the same space with redundancy $1$ or $1+\varepsilon$ for any $\varepsilon > 0$. For localized Gabor frames, where the redundancy coincides with the Beurling density of its index set, this question has been solved in \cite[Theorem 1.2]{balan2011redundancy} by showing that any such Gabor frame with density greater than $1$ admits a subsystem that still forms a Gabor frame and has density smaller than $1+\varepsilon$ for arbitrary $\varepsilon > 0$. We extend this result to general Gabor frames with an arbitrary window $g\in L^2(\R^d)$.

In fact, we show that all of the frames discussed in \Cref{sec:frame_intro} admit subframes with density arbitrary close to the critical density. More precisely, we prove the following theorem:

\begin{theorem} \label{thm:redundancy_intro}
Let $\mathcal{H} \subseteq L^2 (\mathbb{R}^n)$ be a reproducing kernel Hilbert space with reproducing kernels $\{k_x \}_{x \in \mathbb{R}^n}$. Suppose that $\{k_x \}_{x \in \mathbb{R}^n}$ satisfies the diagonal condition \eqref{eq:dc_intro} and that for any countable set $\Lambda \subseteq \mathbb{R}^n$ such that $\{ k_{\lambda} \}_{\lambda \in \Lambda}$ is a frame for $\mathcal{H}$, the following formulae hold:
\begin{align} \label{eq:frd_intro}
 M^- (\{k_{\lambda} \}_{\lambda \in \Lambda} ) = \frac{1}{D^+_0 (\Lambda)} \quad \text{and} \quad M^+ (\{k_{\lambda} \}_{\lambda \in \Lambda} ) = \frac{1}{D^-_0 (\Lambda)}.
\end{align}
Then, for every $\varepsilon > 0$, there exists $\Gamma \subseteq \Lambda$ satisfying $D^-_0(\Gamma) \leq 1+ \varepsilon$ and such that $\{ k_{\gamma} \}_{\gamma \in \Gamma}$ is a frame for $\mathcal{H}$.
\end{theorem}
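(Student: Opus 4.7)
The plan is to construct $\Gamma$ by iteratively applying a selector form of Weaver's conjecture (the Marcus--Spielman--Srivastava theorem, or MSS) and using the assumed identity $M^+(\Gamma)=1/D^-_0(\Gamma)$ to translate operator-theoretic partition data into Beurling density data. If $D^-_0(\Lambda)\le 1+\varepsilon$ we set $\Gamma=\Lambda$; otherwise, let $S$ denote the frame operator of $\{k_\lambda\}_{\lambda\in\Lambda}$ (with bounds $A\le B$) and work with the Parseval normalization $\tilde k_\lambda:=S^{-1/2}k_\lambda$. By \eqref{eq:dc_intro} one has $\|\tilde k_\lambda\|^2\le C_2/A$, and the hypothesis gives $M^+(\Lambda)=1/D^-_0(\Lambda)$, so the ball-averages of $\|\tilde k_\lambda\|^2=\langle k_\lambda,h_\lambda\rangle$ are small whenever $D^-_0(\Lambda)$ is large. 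That smallness is precisely what makes MSS effective.

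The main technical step is a \emph{spatially balanced} application of Weaver's $\mathrm{KS}_r$ theorem: for the Parseval frame with $\sup_\lambda\|\tilde k_\lambda\|^2\le\delta$, MSS supplies, for any $r\in\mathbb{N}$, a partition $\Lambda=\Lambda_1\sqcup\cdots\sqcup\Lambda_r$ with
\[
 \biggl\|\sum_{\lambda\in\Lambda_j}\tilde k_\lambda\otimes\tilde k_\lambda\biggr\|\le \bigl(1/\sqrt r+\sqrt\delta\bigr)^2=:\eta_r\quad\text{for each }j.
\]
I would refine this by a paving argument---tile $\mathbb{R}^n$ by cubes much larger than the mixing scale of the kernel, apply MSS inside each tile, then reassemble---so that every piece $\Lambda_j$ contains an asymptotic $(1/r)$-fraction of $\Lambda\cap B_R(x)$ uniformly in $x$. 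Removing one such piece $\Lambda_{j_0}$ then yields $\Gamma_1:=\Lambda\setminus\Lambda_{j_0}$ for which $\{k_\gamma\}_{\gamma\in\Gamma_1}$ is still a frame (lower bound at least $A(1-\eta_r)$) and with $D^-_0(\Gamma_1)\le(1-1/r+o(1))D^-_0(\Lambda)$.

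Iterating this with a growing sequence $r_n$ chosen so that $\sum_n\eta_{r_n}$ stays bounded (keeping the lower frame bound positive) while $\prod_n(1-1/r_n)$ is small enough to multiply the density below $1+\varepsilon$ in finitely many steps produces the required $\Gamma$. At each stage the identity \eqref{eq:frd_intro} applied to the current iterate is what certifies that the cardinality reduction inside balls is indeed a density reduction, and, just as importantly, furnishes the a priori bound $\sup_\lambda\|\tilde k_\lambda^{(n)}\|^2$ bounded strictly below $1$, which is what keeps the MSS parameter $\delta_n$ in the admissible regime from one stage to the next.

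I expect the hard part to be the spatially balanced version of MSS/Weaver. The raw $\mathrm{KS}_r$ partition is purely operator-theoretic and does not respect the ambient geometry of $\mathbb{R}^n$; a priori, a piece $\Lambda_j$ could concentrate on a sparse sub-pattern of balls and be essentially empty on others, in which case its removal fails to reduce $D^-_0$. Overcoming this requires the paving-and-reassembly step sketched above, together with a careful bookkeeping of how the frame bounds and the sup-norms $\delta_n$ evolve through the iteration---bookkeeping in which \eqref{eq:frd_intro} is used in an essential, not merely cosmetic, way.
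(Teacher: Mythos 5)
Your outline identifies two genuine ingredients of the paper's argument: a spatially localized application of a Weaver-type theorem (the paper uses the \emph{selector} variant from \cite{bownik2024selector}, extracting one index per tile-sized block of a prepared partition so that the removed set automatically has positive Beurling density, while you propose the $\mathrm{KS}_r$ paving combined with a tiling-and-reassembly step; either route could in principle deliver the spatial balance), and a finite iteration that drives $D_0^+$ down by a fixed factor each step. However, there is a real gap in the step that is supposed to make MSS/Weaver applicable.

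You assert that the frame-measure/density identity \eqref{eq:frd_intro} \emph{furnishes the a priori bound} $\sup_\lambda\|\tilde k_\lambda\|^2<1$, and earlier you argue from the same identity that the ball-averages of $\|\tilde k_\lambda\|^2=\langle k_\lambda,h_\lambda\rangle$ being small is ``precisely what makes MSS effective.'' It is not. The identity controls only the \emph{Beurling-averaged} quantity $\limsup_r\sup_x\frac{1}{\#(\Lambda\cap B_r(x))}\sum_{\lambda\in\Lambda\cap B_r(x)}\langle k_\lambda,h_\lambda\rangle$, whereas Weaver/MSS requires a \emph{uniform} bound on the individual norms. Even when $D_0^-(\Lambda)$ is large, individual terms $\langle k_\lambda,h_\lambda\rangle=\|S^{-1/2}k_\lambda\|^2$ can be arbitrarily close to $1$, and your alternative bound $C_2/A$ can exceed $1$. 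If $\delta:=\sup_\lambda\|\tilde k_\lambda\|^2$ is close to $1$, then $(1/\sqrt r+\sqrt\delta)^2\ge\delta$ is also close to (or above) $1$, and removing one $\mathrm{KS}_r$ piece gives no lower frame bound on what remains. The missing idea is to first pass to the subset $\Lambda_\alpha:=\{\lambda\in\Lambda:\langle k_\lambda,h_\lambda\rangle\le\alpha\}$ for a fixed $\alpha<1$ (the paper takes $\alpha=1/(1+\varepsilon/2)$), on which the uniform bound required by Weaver holds by construction, and then to prove separately---this is \Cref{lem:near_critical}, and it is exactly where the frame-measure/density identity is actually used in an essential way---that $\Lambda_\alpha$ retains a positive fraction of the density of $\Lambda$. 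Only then can one produce a removable set of positive density. Without this restriction, your proposal's first MSS application cannot be executed, and no choice of the sequence $r_n$ or bookkeeping of $\delta_n$ across iterations repairs the initial step.
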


As mentioned above, the identities in \eqref{eq:frd_intro} are satisfied whenever the reproducing kernels $\{k_x \}_{x \in \mathbb{R}^n}$ satisfy the weak localization \eqref{eq:wl_intro} and homogeneous approximation property \eqref{eq:hap_intro}; see \Cref{thm:framemeasure_rkhs}. In addition, they can also be shown to hold for general Gabor frames (cf. \Cref{sec:gabor}) and exponential frames on unbounded spectra (cf. \Cref{sec:exponential}), which do not necessarily satisfy \eqref{eq:hap_intro}. Hence, for all such frames, \Cref{thm:redundancy_intro} answers the question mentioned on \cite[p. 673]{balan2007measure} on the existence of subframes near the critical density.

\subsection{Methods}
\Cref{thm:main_intro}, \Cref{thm:exponential_intro}, and \Cref{thm:gabor_intro} follow directly from \Cref{thm:redundancy_intro} combined with the general discretization result for continuous frames obtained in \cite{freeman2019discretization}.
Our proof of \Cref{thm:redundancy_intro} is based on
 a recent selector variant of Weaver's conjecture obtained by the first author in \cite{bownik2024selector}; see \Cref{thm:selector}. This result shows that for a Bessel sequence with upper bound $1$ and any partition of its index set, there exists a selector set that yields a Bessel sequence with an upper bound that only depends on the norms of the vectors and the minimal cardinality of the sets in the partition. Applying this result to the canonical Parseval frame of an adequate subframe of the given frame, this allows us (by use of the identities in \eqref{eq:frd_intro}) to remove a set of positive Beurling density from the given frame yet leave a frame with a reduced, adequately controlled upper density (cf. \Cref{prop:main}).
By a finite iteration of removing sets of positive Beurling density, this yields eventually a subset with density at most $1+\varepsilon$ that yet leaves a frame.
See \Cref{sec:frame_critical} for the precise details.

\subsection{Organization} The paper is organized as follows. Section \ref{sec:frames} provides some preliminaries on abstract frames used throughout the main text. The preliminaries on reproducing kernels on metric measure spaces are discussed in Section \ref{sec:repro_measure}. Section \ref{sec:frame_measure_beurling} is devoted to proving the identities relating frame measure and Beurling density in the setting of reproducing kernel Hilbert spaces. Our main results are proven in Section \ref{sec:frame_critical}. Lastly, various examples to which our main results apply are provided in Section \ref{sec:examples}.

\section{Frames in Hilbert spaces} \label{sec:frames}
Let $(X, \mu)$ be a measure space and let $\mathcal{H}$ be a separable Hilbert space. A system of vectors $\{ g_x \}_{x \in X}$ is called a \emph{frame} for $\mathcal{H}$ if it satisfies the following conditions:
\begin{enumerate}
\item[(i)] For each $f \in \mathcal{H}$, the function $X \ni x \mapsto \langle f, g_x \rangle \in \mathbb{C}$ is measurable;
\item[(ii)] there exist constants $A, B > 0$, called \emph{frame bounds}, such that
\begin{align} \label{eq:frameineq}
A \| f \|^2 \leq \int_X |\langle f, g_x \rangle |^2 \; d\mu(x) \leq B \| f \|^2 \quad \text{for all} \quad f \in \mathcal{H}. 
\end{align} 
\end{enumerate}
A system $\{g_x \}_{x \in X}$ satisfying condition (i) and the upper bound in \eqref{eq:frameineq} is said to be \emph{Bessel system} in $\mathcal{H}$ with Bessel bound $B>0$. A frame $\{ g_x \}_{x \in X}$ for $\mathcal{H}$ whose frame bounds can be chosen to be $A=B=1$ is called a \emph{Parseval frame} for $\mathcal{H}$. 

We treat a frame $\{g_x \}_{x \in X}$ as a family indexed by $X$ and allow for repetitions.

If $\{ g_x \}_{x \in X}$ is a frame for $\mathcal{H}$, then its associated frame operator
\[
S = \int_X \langle \cdot , g_x \rangle g_x \; d\mu (x)
\]
is boundedly invertible on $\mathcal{H}$, and the system $\{S^{-1} g_x \}_{x \in X}$ is also a frame for $\mathcal{H}$, called the \emph{canonical dual frame} of $\{ g_x \}_{x \in X}$. Similarly, for a frame $\{ g_x \}_{x \in X}$, the system $\{S^{-1/2} g_x \}_{x \in X}$ is a Parseval frame for $\mathcal{H}$. 

The following result is  \cite[Corollary 7.7]{bownik2024selector}, which follows easily from \cite[Theorem 7.6]{bownik2024selector}.

\begin{theorem}[\cite{bownik2024selector}] \label{thm:discretization}
There exist a constants $c_0 > 0$ with the following property:

If $\{ g_x \}_{x \in X}$ is a frame for $\mathcal{H}$ with frame bounds $0 < A\leq B<\infty$ and there exists $C>0$ such that
\[
\| g_x \|^2 \leq C \quad \text{for a.e.} \quad x \in X,
\]
then for every $\delta > 0$ there exists a sequence $\{x_i \}_{i \in I}$ in $X$, where $I \subseteq \mathbb{N}$, such that $\{ g_{x_i} \}_{i \in I}$ is a frame for $\mathcal{H}$ with frame bounds $ (A-\delta) aC / \delta^2 $ and $(B+\delta) aC / \delta^2$, where $a>0$ is a constant such that $c_0 \leq a \leq 2 c_0$.
\end{theorem}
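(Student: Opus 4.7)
My plan is to reduce to the Parseval case, discretize via a measurable partition of $X$, and then extract the sought frame by invoking the selector form of Weaver's conjecture (\Cref{thm:selector}, from \cite{bownik2024selector}). First, replace $\{g_x\}_{x \in X}$ by its Parseval normalization $\tilde g_x := S^{-1/2} g_x$, where $S = \int_X \langle \cdot , g_x\rangle g_x \, d\mu(x)$ is the frame operator. Since $A\,\mathrm{Id} \leq S \leq B\,\mathrm{Id}$, one has $\|\tilde g_x\|^2 \leq C/A$ for a.e.\ $x$. It suffices to produce a discrete sub-selection $\{\tilde g_{x_i}\}_{i \in I}$ that is a frame for $\mathcal{H}$ with bounds $c_0 (C/A)$ and $C_0 (C/A)$, since the identity $\sum_{i} |\langle f, g_{x_i}\rangle|^2 = \sum_{i} |\langle S^{1/2} f, \tilde g_{x_i}\rangle|^2$ combined with $A\|f\|^2 \leq \|S^{1/2} f\|^2 \leq B\|f\|^2$ converts such bounds into $c_0 C$ and $C_0 C\,(B/A)$, as claimed.

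For the Parseval frame $\{\tilde g_x\}$ with $c := C/A$ and $\|\tilde g_x\|^2 \leq c$ a.e., I would partition $X = \bigsqcup_{j \in J} E_j$ into measurable pieces of finite measure $\mu(E_j) \leq c$, chosen fine enough that the local operators $T_j := \int_{E_j} \langle \cdot , \tilde g_x\rangle \tilde g_x \, d\mu(x)$ have small operator norm. A measurable selection furnishes $x_j \in E_j$, and the weighted discrete system $\{\sqrt{\mu(E_j)}\, \tilde g_{x_j}\}_{j \in J}$ then approximates the continuous Parseval frame and is itself a frame with bounds close to $1$ once the partition is sufficiently fine; the individual weighted vectors have squared norm at most $c$.

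To remove the weights and obtain a subfamily of $\{\tilde g_x\}$ itself (without rescaling), I would apply \Cref{thm:selector} to the weighted system. After grouping the indices into blocks of an appropriate size $r \asymp 1/c$, the selector theorem produces a set $I \subseteq J$ containing one index from each block such that $\{\sqrt{\mu(E_j)}\, \tilde g_{x_j}\}_{j \in I}$ has a uniform Bessel bound. Since $\mu(E_j) \asymp c$ on each block after refinement, this yields the Bessel bound $C_0 c$ for the unweighted system $\{\tilde g_{x_j}\}_{j \in I}$. The lower frame bound $c_0 c$ is then obtained from a complementary argument: a bounded number of such selectors jointly cover $J$, so at least one captures a positive proportion of the Parseval mass.

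The main obstacle is balancing the two competing requirements on the selector: it must be sparse enough that Weaver-type estimates deliver a universal Bessel bound, yet dense enough to retain a positive fraction of the total Parseval mass for the lower bound. Universality of $c_0$ and $C_0$ in $A$, $B$, $C$ depends crucially on the solution of Kadison--Singer underlying \cite{bownik2024selector}; by contrast, the measurable-selection and partition-refinement steps are standard (cf.\ \cite{freeman2019discretization}).
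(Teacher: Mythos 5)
This statement is imported verbatim from \cite[Theorem 5.7]{freeman2019discretization} (see also \cite[Theorem 9]{bownik2018continuous} and \cite[Corollary 7.7]{bownik2024selector}); the paper does not supply a proof of its own, so there is nothing in the paper to compare your argument against directly. Evaluating your sketch on its own merits: the reduction to the Parseval case is correct, including the estimate $\|\tilde g_x\|^2 = \langle S^{-1}g_x, g_x\rangle \le C/A$ and the unwinding of the bounds via $A\|f\|^2 \le \|S^{1/2}f\|^2 \le B\|f\|^2$; this is indeed how one reduces the general statement to the Parseval one. The partition-and-select outline for the upper Bessel bound is also the right high-level idea.

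The genuine gap is your argument for the lower frame bound. You write that since finitely many selectors jointly cover $J$, ``at least one captures a positive proportion of the Parseval mass.'' This does not work: which selector captures most of the mass of $\sum_j |\langle f, \sqrt{\mu(E_j)}\tilde g_{x_j}\rangle|^2$ can vary with $f$, and a pigeonholing argument per $f$ does not produce a single index set $I$ with a uniform lower frame bound. The selector theorem you invoke (\Cref{thm:selector}) is intrinsically one-sided: it produces a subset with a controlled \emph{upper} Bessel bound, and says nothing about a lower bound for the selected set. The actual mechanism in the literature for getting the lower bound is different and is the hard part: one either (i) uses a two-sided Kadison--Singer/Weaver-type statement ($KS_r$, which partitions a Parseval frame of small vectors into $r$ nearly-tight pieces, each of which then has a positive lower bound close to $1/r$), or (ii) applies the one-sided bound to the \emph{complement} of a selected set within a nearly Parseval discrete frame, so that the complement inherits a lower bound by subtraction. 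Neither of these is what you describe, and without one of them the argument does not close. A secondary issue: you stipulate $\mu(E_j) \le c$ and then assert $\mu(E_j) \asymp c$ ``after refinement,'' but a Riemann-sum discretization of the Parseval identity forces $w_j = \sqrt{\mu(E_j)}$ with $\mu(E_j)$ dictated by the geometry of the partition and not by $c$, so the claimed scaling that removes the weights at cost exactly $\asymp c$ is not justified as stated.
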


In addition to the discretization result (\Cref{thm:discretization}), we will also use the following selector variant of Weaver's conjecture. Theorem \ref{thm:selector} is a special case of \cite[Theorem 3.1]{bownik2024selector}, see also \cite[Theorem 3.3]{bownik2019syndetic}.

\begin{theorem}[\cite{bownik2024selector}] \label{thm:selector}
Let $I$ be countable and $\alpha, r>0$. Suppose that $\{ g_i \}_{i \in I}$ is a Bessel sequence in a separable Hilbert space $\mathcal{H}$ with Bessel bound $1$ satisfying
\[
 \| g_i \|^2 \leq \alpha \quad \text{for all} \quad i \in I. \]
Then, given a collection $\{ J_k \}_{k \in K}$ of disjoint subsets of $I$ with $\# J_k \geq r$ for all $k \in K$, there exists a \emph{selector} $J \subseteq \bigcup_{k \in K} J_k$ satisfying
\begin{align} \label{eq:selector1}
\# (J \cap J_k) = 1 \quad \text{for all} \quad k \in K
\end{align}
and such that $\{g_i\}_{i \in J}$ is a Bessel sequence with bound
$ \big(\frac{1}{\sqrt{r}} + \sqrt{\alpha} \big)^2$.
\end{theorem}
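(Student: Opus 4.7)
The plan is to realize the selector as the outcome of a suitable random choice and to control the resulting sum of rank-one operators via the Marcus--Spielman--Srivastava solution to Weaver's conjecture $\mathrm{KS}_r$. First I would make two preliminary reductions: (a) replace each $J_k$ by an arbitrary subset of cardinality exactly $r$, since any selector from the smaller family is automatically a selector from the original $J_k$; and (b) reduce to the case where $K$ is finite and $\Hil$ is finite-dimensional. Finiteness of $K$ can be obtained by a diagonal/compactness extraction on the compact product $\prod_{k \in K} J_k$ (whose factors are finite discrete spaces): solving the problem on an exhaustion $K_1 \subseteq K_2 \subseteq \cdots \uparrow K$ produces selectors whose coordinates can be made to stabilize, and the Bessel bound passes through the limit because it can be verified against arbitrary finite subfamilies. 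Finite-dimensionality of $\Hil$ then follows by passing to $\overline{\mathrm{span}}\{g_i : i \in \bigcup_k J_k\}$.

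In this finite setting, for each $k \in K$ I introduce an independent random vector $X_k$ taking the value $g_i$ with probability $1/r$ for each $i \in J_k$. Writing $S := \sum_{i \in I} g_i g_i^* \preceq \bI$ by the Bessel hypothesis, a second-moment computation gives
\[
\sum_{k \in K} \EV[X_k X_k^*] \;=\; \frac{1}{r} \sum_{k \in K} \sum_{i \in J_k} g_i g_i^* \;\preceq\; \frac{1}{r}\, S \;\preceq\; \frac{1}{r}\, \bI,
\]
while $\|X_k\|^2 \leq \alpha$ deterministically. Rescaling $Y_k := \sqrt{r}\, X_k$ brings the estimates into the standard normalization $\sum_k \EV[Y_k Y_k^*] \preceq \bI$ with $\|Y_k\|^2 \leq r\alpha$. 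If the Marcus--Spielman--Srivastava theorem is invoked in its equality form, I would fill the residual $\bI - \sum_k \EV[Y_k Y_k^*]$ by finitely many deterministic rank-one vectors of norm at most $\sqrt{r\alpha}$; since these are positive-semidefinite contributions, any bound on the padded sum transfers to the original sum $\sum_k Y_k Y_k^*$ by monotonicity.

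The Marcus--Spielman--Srivastava theorem on independent sums of rank-one operators with bounded norm now guarantees the existence of an outcome for which
\[
\Bigl\|\sum_{k \in K} Y_k Y_k^*\Bigr\| \;\leq\; \bigl(1 + \sqrt{r\alpha}\bigr)^2.
\]
Undoing the rescaling by a factor of $1/r$ converts this into
\[
\Bigl\|\sum_{k \in K} g_{i_k} g_{i_k}^*\Bigr\| \;\leq\; \frac{1}{r}\bigl(1 + \sqrt{r\alpha}\bigr)^2 \;=\; \Bigl(\tfrac{1}{\sqrt{r}} + \sqrt{\alpha}\Bigr)^2,
\]
where $i_k \in J_k$ is the index realized by $X_k$. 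Setting $J := \{i_k : k \in K\}$ gives a set satisfying the cardinality property \eqref{eq:selector1}, and the displayed estimate is precisely the claimed Bessel bound for $\{g_i\}_{i \in J}$.

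The main obstacle is the Marcus--Spielman--Srivastava input itself---the interlacing-families and mixed-characteristic-polynomial machinery underlying the resolution of the Kadison--Singer problem---which is the deep content and must be used as a black box. Everything else, namely the random construction, the second-moment computation, the scaling, the positive-semidefinite padding, and the finite-to-countable reductions, is comparatively routine bookkeeping built around this single external ingredient.
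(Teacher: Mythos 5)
The paper does not prove this theorem --- it is quoted as a special case of Theorem~3.1 of \cite{bownik2024selector} --- so there is no internal proof to compare against. On its own terms, your argument is correct and follows what is essentially the expected route: realize the selector as an outcome of independent uniform random choices $X_k \in J_k$, verify the covariance bound $\sum_k \EV[X_k X_k^*] \preceq \tfrac{1}{r}\bI$ from the Bessel hypothesis and disjointness of the $J_k$, rescale, pad by deterministic PSD rank-one pieces to reach the exact-identity normalization required by Marcus--Spielman--Srivastava, and invoke the positive-probability conclusion; the arithmetic $\tfrac{1}{r}\bigl(1+\sqrt{r\alpha}\bigr)^2 = \bigl(\tfrac{1}{\sqrt{r}}+\sqrt{\alpha}\bigr)^2$ is right. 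The compactness reduction from countable $K$ to finite $K$ via the product $\prod_k J_k$ and verification of the Bessel bound on finite subfamilies is also standard and sound.

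Two small points worth tightening. First, since $r>0$ is merely a positive real and $\#J_k$ is an integer, you cannot always pass to a subset of cardinality exactly $r$; you should pass to a subset of cardinality $\lceil r\rceil$, and then observe that the resulting bound $\bigl(\tfrac{1}{\sqrt{\lceil r\rceil}}+\sqrt{\alpha}\bigr)^2 \le \bigl(\tfrac{1}{\sqrt{r}}+\sqrt{\alpha}\bigr)^2$ is at least as good as claimed, so no generality is lost. Second, when padding $T := \bI - \sum_k\EV[Y_kY_k^*] \succeq 0$ by rank-one pieces with $\|w_j\|^2\le r\alpha$, you should say explicitly that an eigenvalue $\mu_j \le 1$ of $T$ exceeding $r\alpha$ is split into $\lceil \mu_j/(r\alpha)\rceil$ equal copies; this makes the norm constraint unambiguous for every value of $r\alpha$. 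Neither point affects the validity of the argument.
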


\section{Reproducing kernels on metric measure spaces} \label{sec:repro_measure}
This section considers the setting of reproducing kernels on metric measure spaces. 

\subsection{Assumptions}
We make the following assumptions on the metric measure spaces and the reproducing kernels.

\subsection*{Metric measure space} We denote by $(X, d, \mu)$ a metric measure space, which is a set $X$ equipped with a metric $d$ and a Borel measure $\mu$. We assume that $\mu(X) = \infty$, and that all balls $B_r (x) = \{ y \in X : d(y,x) < r \}$ satisfy $\mu(B_r(x)) < \infty$ for all $r > 0$ and $x \in X$. In addition, we make the following assumptions:
\begin{enumerate}[-]
 \item \emph{Nondegeneracy of balls}: There exists $r>0$ such that
 \begin{align}\label{eq:NDB}
  \inf_{x \in X} \mu(B_r(x)) > 0.
 \end{align}
\item \emph{Weak annular decay property}:
\begin{align} \label{eq:wad}
 \lim_{r\to \infty} \sup_{x \in X} \frac{\mu(B_{r+1} (x) \setminus B_r (x))}{\mu(B_r(x))} = 0.
\end{align}
\item \emph{Doubling at large scale}: There exists $r_0 > 0$ and  $C_d > 0$ such that for all $r\geq r_0$
\begin{align} \label{eq:doubling}
 \mu(B_{2r}(x)) \leq C_d \mu(B_r (x)) \quad \text{for all} \quad x \in X.
\end{align}
\end{enumerate}
The standing assumptions \eqref{eq:NDB} and \eqref{eq:wad} are the same as in \cite{fuehr2017density, mitkovsi2020density}. The weak annular decay property \eqref{eq:wad} does imply that the measure is \emph{locally} doubling at large scale (cf. \cite[Lemma 3.2]{fuehr2017density}), which is a similar but weaker condition than the doubling condition \eqref{eq:doubling} assumed throughout this paper.

There are various simple sufficient conditions on a metric measure space that imply the weak annular decay property. For example, if $d$ is a doubling measure and is a length function or satisfies certain geodesic properties, then $(X, d, \mu)$ satisfies the annular decay property. See the papers \cite{buckley1999maximal, auscher2013local, tessera2007volume} (and the references therein) for further details.

We mention the following two consequences of the weak annular decay property \eqref{eq:wad}.
\begin{lemma}[\cite{fuehr2017density}] \label{lem:wad}
 For all $r' > 0$, we have
 \[
  \lim_{r \to \infty} \sup_{x \in X} \frac{\mu(B_{r+r'}(x))}{\mu(B_r(x))} = 1
 \quad \text{and} \quad
  \lim_{r \to \infty} \sup_{x \in X} \frac{\mu(B_{r+r'} (x) \setminus B_{r-r'} (x))}{\mu(B_r(x))} = 0.
 \]
\end{lemma}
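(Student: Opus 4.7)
The plan is to reduce both statements to the weak annular decay property \eqref{eq:wad} by iterating the width-one decay estimate and then using the monotonicity of $r\mapsto \mu(B_r(x))$ to fill in non-integer radii.

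For the first limit, I would begin with the integer case $r' = n \in \mathbb{N}$ and telescope:
\[
\frac{\mu(B_{r+n}(x))}{\mu(B_r(x))} \;=\; \prod_{k=0}^{n-1}\left(1+\frac{\mu(B_{r+k+1}(x)\setminus B_{r+k}(x))}{\mu(B_{r+k}(x))}\right).
\]
Since each factor is $\geq 1$, the supremum over $x\in X$ of the left-hand side is at most the product over $k$ of the suprema of the individual factors. Applying \eqref{eq:wad} with $r$ replaced by $r+k$ (valid because $r+k\to\infty$ as $r\to\infty$ for each fixed $k$), each of those suprema tends to $1$, and the resulting finite product therefore tends to $1$. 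Combined with the trivial lower bound $\mu(B_{r+n}(x))/\mu(B_r(x))\geq 1$, this settles the integer case. For arbitrary $r'>0$, I would set $n=\lceil r'\rceil$ and use the inclusion $B_{r+r'}(x)\subseteq B_{r+n}(x)$ to pass from the integer case to the general one by monotonicity.

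For the second limit, I would write (for $r>r'$)
\[
\frac{\mu(B_{r+r'}(x)\setminus B_{r-r'}(x))}{\mu(B_r(x))} \;=\; \frac{\mu(B_{r+r'}(x))}{\mu(B_r(x))} \;-\; \frac{\mu(B_{r-r'}(x))}{\mu(B_r(x))},
\]
and show that both terms on the right tend to $1$ uniformly in $x$. The first term is handled by the first limit proved above. For the second term, I would apply the first limit with $r$ replaced by $r-r'$ (and the same $r'$), giving $\sup_{x\in X}\mu(B_r(x))/\mu(B_{r-r'}(x))\to 1$; together with the trivial bound $\mu(B_{r-r'}(x))/\mu(B_r(x))\leq 1$ coming from monotonicity, the reciprocal converges uniformly to $1$ as well. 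The difference then vanishes uniformly in $x$.

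No step here is technically hard; the only mild bookkeeping is ensuring that the supremum over $x\in X$ commutes appropriately with the telescoping product (which it does because all factors are $\geq 1$) and that the shift $r\mapsto r-r'$ is harmless in the large-$r$ limit. Both points are routine.
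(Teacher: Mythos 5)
Your proof is correct. The paper does not include its own argument for this lemma (it simply cites \cite[Lemma 3.5]{fuehr2017density}), but the telescoping of the ratio $\mu(B_{r+n}(x))/\mu(B_r(x))$ into width-one annular factors, followed by monotonicity to handle non-integer $r'$ and the subtraction identity for the annulus, is the standard way to bootstrap these two limits from the width-one weak annular decay property \eqref{eq:wad}, and all the uniformity-in-$x$ details (sup of a product of factors $\geq 1$ bounded by the product of sups; $\sup(f-g)\leq \sup f - \inf g$; passing from $\sup_x \mu(B_r)/\mu(B_{r-r'})\to 1$ to $\inf_x \mu(B_{r-r'})/\mu(B_r)\to 1$ by taking reciprocals) are handled correctly.
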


See \cite[Lemmas 3.2 and 3.5]{fuehr2017density} for the proofs of the statements in \Cref{lem:wad}.
Lastly, the following simple consequence of our standard assumptions will be used repeatedly.

\begin{lemma} \label{lem:uniform_radii}
\[
 \lim_{r \to \infty} \inf_{x \in X} \mu (B_r (x)) = \infty.
\]
\end{lemma}
\begin{proof}
By nondegeneracy of balls \eqref{eq:NDB}, there exists $c_0>0$ and $r>0$ such that $\mu(B_{r}(x)) \ge c_0$ for all $x\in X$.
On the contrary suppose that there exists a constant $c_1>0$ and a sequence $(x_n)_{n\in \N}$ such that $\mu(B_{nr}(x_n)) \le c_1$.
By Lemma \ref{lem:wad} there exists $N\in \N$ such that for all $k\ge N$ and all $x\in X$ we have
\begin{equation}\label{ur3}
\frac{\mu(B_{(k+2)r} (x) \setminus B_{(k-2)r} (x))}{\mu(B_{kr}(x))} < \frac{c_0}{c_1}.
\end{equation}
Let $k\ge N$ be the smallest integer such that $B_{(k+1)r}(x_N) \setminus B_{kr}(x_N) \ne \emptyset$. Such $k$ exists since $\mu(X)=\infty$. By the minimality of $k$, we have $\mu(B_{kr}(x_N))=\mu(B_{Nr}(x_N)) \le c_1$. Take any $y \in B_{(k+1)r}(x_N) \setminus B_{kr}(x_N)$. Since
\[
B_r(y) \subseteq B_{(k+2)r}(x_N) \setminus B_{(k-1)r}(x_N)
\]
and $\mu(B_r(y))\ge c_0$, we have
\[
\frac{\mu(B_{(k+2)r} (x_N) \setminus B_{(k-1)r} (x_N))}{\mu(B_{kr}(x_N))} \ge \frac{c_0}{c_1}.
\]
This yields a contradiction with \eqref{ur3}.
\end{proof}

\subsection*{Reproducing kernel}
We denote by $\mathcal{H} \subseteq L^2 (X, \mu)$ an infinite-dimensional \emph{reproducing kernel Hilbert space (RKHS)}, i.e., a closed subspace such that, for each $x \in X$, the point evaluation $\mathcal{H} \ni f \mapsto f(x) \in \mathbb{C}$ is a well-defined bounded linear function. Then for each $x \in X$, there exists $k_x \in \mathcal{H}$ such that $f(x) = \langle f, k_x \rangle $ for all $f \in \mathcal{H}$. The function $k_x$ is said to be the \emph{reproducing kernel at $x$}, and the \emph{reproducing kernel for $\mathcal{H}$} is defined as the function $k : X \times X \to \mathbb{C}$ given by
\[
 k(x,y) = \langle k_y, k_x \rangle.
\]
Note  $k_x(y) = \overline{k(x,y)} = k(y,x)$, and that each $f \in \mathcal{H}$ satisfies the reproducing formula
\[
 f(x) = \langle f, k_x \rangle = \int_X  f(y) k(x,y) \; d\mu(y), \quad x \in X.
\]
In particular, the system $\{k_x \}_{x \in X}$ is a Parseval frame for $\mathcal{H} \subseteq L^2 (X, \mu)$.

A condition on the reproducing kernel that we will always assume is the following:
\begin{enumerate}[-]
 \item \emph{Diagonal condition}: There exist constants $C_1, C_2 > 0$ such that
 \begin{align} \label{eq:dc}
  C_1 \leq k(x,x) \leq C_2
 \end{align}
for all $x \in X$.
\end{enumerate}

\subsection*{Localization conditions}
In addition to the conditions \eqref{eq:NDB}, \eqref{eq:wad}, and \eqref{eq:doubling} on the metric measure space and the reproducing kernel \eqref{eq:dc}, we will also consider localization properties of systems of vectors in the reproducing kernel Hilbert space. Specifically, given a system of vectors $\{g_x\}_{x \in X}$ in the reproducing kernel Hilbert space $\mathcal{H} \subseteq L^2 (X)$, we consider the conditions:
\begin{enumerate}[-]
\item \emph{Weak localization property}: For every $\varepsilon > 0$, there exists $r = r(\varepsilon) > 0$ such that
\begin{align} \label{eq:wl}
 \sup_{x \in X} \int_{X \setminus B_r (x)} |g_x(y)|^2 \; d\mu(y) < \varepsilon^2.
\end{align}
\item \emph{Homogeneous approximation property}: If $\Lambda \subseteq X$ is a countable set such that $\{ g_{\lambda} \}_{\lambda \in \Lambda}$ is a Bessel sequence in $\mathcal{H}$, then for every $\varepsilon > 0$, there exists $r = r(\varepsilon) > 0$ such that
\begin{align} \label{eq:hap}
 \sup_{x \in X} \sum_{\lambda \in \Lambda \setminus B_r (x)} |\gf (x)|^2 < \varepsilon^2.
\end{align}
\end{enumerate}
For the special case when $\{g_x\}_{x \in X}$ are the reproducing kernels in $\mathcal{H}$, the above two conditions are the same as those used in \cite{fuehr2017density}; see also \cite{mitkovsi2020density} for related conditions.

In contrast to the assumptions \eqref{eq:NDB}, \eqref{eq:wad}, \eqref{eq:doubling}, and \eqref{eq:dc}, we will not always assume the conditions \eqref{eq:wl} and \eqref{eq:hap} to hold; instead, it will be explicitly mentioned in each result whenever the conditions \eqref{eq:wl} or \eqref{eq:hap} are assumed.

\subsection{Sets and Beurling densities}
The \emph{lower Beurling density} and \emph{upper Beurling density} of a set $\Lambda \subseteq X$ are defined as
\begin{align} \label{eq:lower_density}
 D^-(\Lambda) = \liminf_{r \to \infty} \inf_{x \in X} \frac{\#(\Lambda \cap B_r (x))}{\mu(B_r(x))}
\end{align}
and
\begin{align} \label{eq:upper_density}
 D^+(\Lambda) = \limsup_{r \to \infty} \sup_{x \in X} \frac{\#(\Lambda \cap B_r (x))}{\mu(B_r(x))},
\end{align}
respectively. In addition, we will also make use of the following weighted versions of the Beurling densities, 
\begin{align} \label{eq:lower_density_dimension}
 \lbdf(\Lambda) := \liminf_{r \to \infty} \inf_{x \in X} \frac{\#(\Lambda \cap B_r (x))}{\int_{B_r(x)} k(y,y) \; d\mu (y)}
\end{align}
and
\begin{align} \label{eq:upper_density_dimension}
 \ubdf(\Lambda) := \limsup_{r \to \infty} \sup_{x \in X} \frac{\#(\Lambda \cap B_r (x))}{\int_{B_r(x)} k(y,y) \; d\mu(y)},
\end{align}
which are sometimes referred to as the \emph{dimension-free lower} and \emph{upper Beurling density}, respectively.
Note that the diagonal condition \eqref{eq:dc} implies the relation
\begin{align} \label{eq:density_relation}
 \frac{D^{\pm} (\Lambda)}{C_2}  \leq D^{\pm}_0 (\Lambda) \leq \frac{D^{\pm}(\Lambda)}{C_1} .
\end{align}
for any arbitrary countable set $\Lambda \subseteq X$. We also occasionally write $D^{\pm}_{\mu}$ to stress the dependence of the densities on the measure $\mu$.

A set $\Lambda \subseteq X$ is said to be \emph{relatively separated} if there exists $r_0 > 0$ such that for all $r\geq r_0$ there exists $ C_r > 0$ such that
\[
 \#(\Lambda \cap B_r (x)) \leq C_r \mu(B_r (x))
\]
for all $x \in X$. We say that a set $\Lambda \subseteq X$ is \emph{relatively dense} if there exists $r_0 > 0$ such that, for all $r\geq r_0$,
\[
 1 \leq \# (\Lambda \cap B_{r} (x))
\]
for all $x \in X$. 

The following lemma is \cite[Lemma 3.4]{fuehr2017density}.

\begin{lemma}[\cite{fuehr2017density}] \label{lem:cardinality_measure}
If $\Lambda \subseteq X$ is a relatively separated set, then for all sufficiently large $\rho > 0$, and $s>\rho, t>0$, there exists a constant $C = C(\rho, \Lambda) > 0$ such that
\begin{align} \label{eq:coverlemma}
 \# \big( \Lambda \cap (B_{s+t} (x) \setminus B_s (x)) \big) \leq C \mu(B_{s+t+\rho}(x) \setminus B_{s
 -\rho} (x))
\end{align}
for all $x \in X$.
\end{lemma}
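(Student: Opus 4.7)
My plan is to apply Fubini and the relative-separation hypothesis in a direct counting argument, avoiding the need for an explicit Vitali cover. Write $\Lambda' := \Lambda \cap A$ with $A := B_{s+t}(x) \setminus B_s(x)$, and $A' := B_{s+t+\rho}(x) \setminus B_{s-\rho}(x)$. I first fix $\rho > 0$ large enough that (a) the relative-separation hypothesis supplies $\#(\Lambda \cap B_{\rho/2}(y)) \leq C_{\rho/2}\, \mu(B_{\rho/2}(y))$ for every $y \in X$, and (b) the nondegeneracy of balls \eqref{eq:NDB} yields $m := \inf_{y \in X} \mu(B_{\rho/2}(y)) > 0$; both hold once $\rho$ is sufficiently large.

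By nondegeneracy, the left-hand count is controlled by a sum of ball measures,
\[
\#\Lambda' \cdot m \;\leq\; \sum_{\lambda \in \Lambda'} \mu\bigl(B_{\rho/2}(\lambda)\bigr),
\]
and Fubini rewrites this sum as an integral:
\[
\sum_{\lambda \in \Lambda'} \mu\bigl(B_{\rho/2}(\lambda)\bigr) \;=\; \int_X N(y) \, d\mu(y), \qquad N(y) := \#\bigl(\Lambda' \cap B_{\rho/2}(y)\bigr).
\]
A short triangle-inequality computation (using $s > \rho$) shows that $N(y) > 0$ forces $y \in B_{s+t+\rho/2}(x) \setminus B_{s-\rho/2}(x) \subseteq A'$, so the integration is effectively over $A'$. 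Bounding $N(y) \leq \#(\Lambda \cap B_{\rho/2}(y)) \leq C_{\rho/2}\, \mu(B_{\rho/2}(y))$ via relative separation then yields
\[
\#\Lambda' \cdot m \;\leq\; C_{\rho/2} \int_{A'} \mu\bigl(B_{\rho/2}(y)\bigr)\, d\mu(y) \;\leq\; C_{\rho/2}\, M\, \mu(A'),
\]
where $M := \sup_{y \in X} \mu(B_{\rho/2}(y))$. Setting $C := C_{\rho/2}\, M / m$, which depends only on $\rho$ and $\Lambda$, gives the claim.

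The principal obstacle is securing $M < \infty$, that is, a uniform upper bound on $\mu(B_{\rho/2}(y))$. In the metric-measure-space setups treated in the paper this is a standing feature (automatic for $\mathbb{R}^n$ with Lebesgue measure and for homogeneous metric measure spaces), and is where the proof implicitly uses more structure than just \eqref{eq:NDB} and \eqref{eq:wad}; in its absence the covering/Fubini argument cannot be closed into a bound proportional to $\mu(A')$ alone, and one must instead invoke a doubling-type control at scale $\rho$ supplied by the ambient geometry of $(X,d,\mu)$.
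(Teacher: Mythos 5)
Your Fubini/overlap-counting argument is the right strategy and the one the cited Führ et al.\ proof uses: nondegeneracy gives $m := \inf_y\mu(B_{\rho/2}(y)) > 0$, Fubini turns $\sum_{\lambda\in\Lambda'}\mu(B_{\rho/2}(\lambda))$ into $\int N(y)\,d\mu(y)$, and the triangle inequality confines the integrand to $A'$. The one place the argument can break is in bounding $N(y)$, and this is where your gap sits.

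You bound $N(y)\leq C_{\rho/2}\,\mu(B_{\rho/2}(y))$ via the paper's reformulation of relative separation and then need $M:=\sup_y\mu(B_{\rho/2}(y))<\infty$. You are right to flag this as unresolved: it is not implied by \eqref{eq:NDB} or \eqref{eq:wad}. For instance, $X=\R$ with $d\mu(x)=(1+|x|)\,dx$ satisfies both standing hypotheses, but $\mu(B_r(x))=2r(1+|x|)$ for $|x|\geq r$, so $\sup_x\mu(B_r(x))=\infty$ for every $r$. The fix is to bound $N(y)$ pointwise by a constant rather than by a multiple of $\mu(B_{\rho/2}(y))$: in the source reference, ``relatively separated'' carries the uniform multiplicity bound $N_0:=\sup_{y\in X}\#(\Lambda\cap B_{\rho/2}(y))<\infty$, and then $\int_{A'}N(y)\,d\mu\leq N_0\,\mu(A')$, with no supremum on ball measures needed. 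The paper's restatement $\#(\Lambda\cap B_r(x))\leq C_r\mu(B_r(x))$ recovers the multiplicity bound only when ball measures are uniformly bounded above at scale $r$; replace your intermediate estimate $N(y)\leq C_{\rho/2}\mu(B_{\rho/2}(y))\leq C_{\rho/2}M$ by the direct $N(y)\leq N_0$ and the proof closes without additional hypotheses.
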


We will also need several lemmas about Beurling densities.

\begin{lemma}\label{exist}
For any finite collection of sets $\Lambda \subseteq X$,
there exists a partition $\{X_y\}_{y\in Y}$ satisfying for some $R_0>0$ and $C_0>1$,
 \begin{align}\label{xy}
 B_{R_0}(y) & \subseteq X_y \subseteq B_{3R_0}(y) \qquad \text{for all }y\in Y,
\\
\label{eq:stephyp2}
 \frac{1}{C_0} D^-_{\mu}(\Lambda) & \leq \frac{\#(\Lambda \cap X_y)}{\mu(X_y)} \leq C_0 D^{+}_{\mu}(\Lambda) \qquad \text{for all } y \in Y.
\end{align}
\end{lemma}

\begin{proof}
Choose $R_0>0$ such that doubling property \eqref{eq:doubling} holds for $r\ge R_0$, and for every set $\Lambda$ in a finite collection we have
\[
\frac{1}{2} D^-_\mu(\Lambda) \le \frac{\#(\Lambda \cap B_{r}(x))}{\mu(B_{r}(x))} \le 2 D^+_{\mu}(\Lambda) \qquad\text{for all }x\in X. 
\]
Let  $\{B_{R_0}(y)\}_{y\in Y}$ be a maximal disjoint family of balls.  Hence, $\bigcup_{y\in Y} B_{2R_0}(y)=X$. Choose a partition $(X_y)_{y\in Y}$ of $X$ such that
\[
 B_{R_0}(y)  \subseteq X_y \subseteq B_{2R_0}(y) \qquad \text{for all }y\in Y.
\]
Then, for any $y\in Y_0:=Y$ we have 
\[
\begin{aligned}
 \frac{1}{2C_d} D^-_{\mu}(\Lambda)  \leq \frac{1}{C_d} \frac{\#(\Lambda \cap B_{R_0}(y))}{\mu(B_{R_0}(y))} 
& \le  \frac{\#(\Lambda \cap X_y)}{\mu(X_y)} 
\le C_d \frac{\#(\Lambda \cap B_{2R_0}(y))}{\mu(B_{2R_0}(y))}
\leq 2 C_d D^{+}_{\mu}(\Lambda).
\end{aligned}
\]
Thus, the partition $\{X_y\}_{y\in Y}$ satisfies \eqref{xy} and \eqref{eq:stephyp2} with $C_0=2C_d$.
\end{proof}

The following regularization result, which might be of independent interest, improves upon Lemma \ref{exist}. 

\begin{lemma}\label{regularization}
 Let $\Lambda \subseteq X$ be such that $0<D^-_{\mu}(\Lambda) \le D^+_{\mu}(\Lambda)<\infty$. Let $\ve>0$. Suppose that $\{X_y\}_{y\in Y}$ is a partition of $X$ satisfying satisfying \eqref{xy} and \eqref{eq:stephyp2} for some $R_0>0$ and $C_0>1$.
  Then, for sufficiently large $R'>0$, there exists a partition $\{X'_{y}\}_{y \in Y'}$ of $X$ satisfying:
 \begin{align}\label{xyh}
 B_{R'}(y) & \subseteq X'_{y} \subseteq B_{3R'}(y) \qquad \text{for all }y\in Y',
 \\
 \label{reg0}
 D^-_{\mu}(\Lambda) - \ve &\le \frac{\#(\Lambda \cap X'_{y})}{\mu(X'_{y})} \le D^{+}_{\mu}(\Lambda) + \ve \qquad \text{for all }y\in Y',
 \\
 \label{nest}
&  \forall y \in Y\quad  \exists y' \in Y' \quad X_y \subseteq X'_{y'}.
\end{align} 
 \end{lemma}

Since the proof of Lemma \ref{regularization} is quite long and complex, it is postponed till the Appendix. Using Lemma \ref{regularization} we deduce two convenient results on Beurling densities.

\begin{lemma}\label{sub}
Let $\Lambda \subseteq X$ be such that $0<D^-_{\mu}(\Lambda) \le D^+_{\mu}(\Lambda)<\infty$. Then for any $\ve>0$ there exists $\Gamma \subseteq \Lambda$ such that
\[
D^-_{\mu}(\Lambda) - \ve \le D^-_{\mu}(\Gamma) \le D^+_{\mu}(\Gamma) \le D^-_{\mu}(\Lambda).
\]
 \end{lemma}
 
 \begin{proof}
By Lemma \ref{exist}, there exists a partition $\{X_y\}_{y\in Y}$ of $X$ satisfying \eqref{xy} and \eqref{eq:stephyp2}.
 Let $\{X'_{y}\}_{y' \in Y'}$ be a partition of $X$ as in Lemma \ref{regularization} for some $R'>0$. Since $R'>0$ can be chosen to be arbitrarily large, by Lemma \ref{lem:uniform_radii}, we can assume that
 \[
 \mu(X'_y) \ge \mu(B_{R'}(y)) \ge 1/\ve \qquad\text{for all } y\in Y'.
 \]
 If $y \in Y'$ is such that $D^- (\Lambda) \mu(X'_y) \geq \# (\Lambda \cap X_y)$, then we set $\Lambda_y = \Lambda \cap X'_y$. Otherwise, using \eqref{reg0},  we choose a set $\Lambda_y \subseteq \Lambda \cap X'_y$ of cardinality
 $\#\Lambda_y = \lfloor D^-_{\mu}(\Lambda) \mu(X_y') \rfloor$. In either case,
 \begin{align*} 
  D^- (\Lambda) - \varepsilon  \leq \frac{\# \Lambda_y}{\mu(X'_y)} \leq D^-_{\mu} (\Lambda).
 \end{align*}
Define $\Gamma := \bigcup_{y \in Y} \Lambda_y$. Then
\begin{equation}\label{eq:Lambdaycardinality}
  D^- (\Lambda) - \varepsilon \leq \frac{\# (\Gamma \cap X'_y)}{\mu(X'_y)} \leq D^-_{\mu} (\Lambda) \qquad \text{for all } y \in Y'.
 \end{equation}
By \eqref{xyh}, we get for $x \in X$ and $r > 6R'$,
\begin{align} \label{eq:nested1}
 B_r(x) \subseteq \bigcup_{y \in Y' :\  X'_y \cap B_{r} (x) \ne \emptyset} X'_y \subseteq B_{r+6R'} (x)
\end{align}
and
\begin{align} \label{eq:nested2}
 B_{r-6R'} (x) \subseteq \bigcup_{y \in Y' :\  X'_y \subseteq B_{r} (x)} X'_y \subseteq B_{r} (x).
\end{align}
Using \eqref{eq:nested1}, we thus get
\begin{align*}
 \frac{\#(\Gamma \cap B_r (x))}{\mu(B_r(x))} &\leq \frac{\#\big(\Gamma \cap \bigcup_{y \in Y' : X'_y \cap B_{r} (x) \ne \emptyset} X'_y \big)}{\mu \big(\bigcup_{y \in Y' : X'_y \cap B_{r} (x) \ne \emptyset} X'_y\big)}  \frac{\mu( B_{r+6R'}(x))}{\mu(B_r(x))}.
\end{align*}
Since $\lim_{r \to \infty} \sup_{x \in X} \frac{\mu( B_{r+6R'}(x))}{\mu(B_r(x))} = 1$, by the weak annular decay property \eqref{eq:wad}, the claim $D_\mu^+(\Gamma) \leq D^-_{\mu}(\Lambda)$ follows from \eqref{eq:Lambdaycardinality}.
The claim $D^-_{\mu} (\Lambda) - \varepsilon \leq D^-_{\mu} (\Gamma)$ follows in a similar manner from \eqref{eq:nested2}.
  \end{proof}

\begin{lemma}\label{sup}
  For $i=1,2$, let $\Lambda_i \subseteq X$ be such that
 \[
 0<D^-_{\mu}(\Lambda_i) \le D^+_{\mu}(\Lambda_i)<\infty
 \qquad\text{and}\qquad D^+_{\mu}(\Lambda_1) - D^-_{\mu}(\Lambda_1) \le D^-_{\mu}(\Lambda_2).
\] 
 Then for any $\ve>0$, there exists a set $\Gamma$ satisfying $\Lambda_1 \subseteq \Gamma \subseteq \Lambda_1 \cup \Lambda_2$ and such that
\[
D^+_{\mu}(\Lambda_1) - \ve \le D^-_{\mu}(\Gamma) \le D^+_{\mu}(\Gamma) \le D^+_{\mu}(\Lambda_1)+\ve.
\]
 \end{lemma}
\begin{proof}
By Lemma \ref{exist}, there exist $R_0>0$ and $C_0>1$, and a partition $\{X_y\}_{y\in Y}$ such that \eqref{xy} and \eqref{eq:stephyp2} hold both for $\Lambda_1$ and $\Lambda_2$.
By Lemma \ref{regularization} applied to the set $\Lambda_1$, there exists a partition $\{X'_{y}\}_{y \in Y'}$  of $X$ satisfying \eqref{xyh}
for some radius $R'>0$ and
 \begin{equation}\label{ep2}
  D^-_{\mu} (\Lambda_1) - \frac{\varepsilon}{2} \leq \frac{\#(\Lambda_1 \cap X'_y)}{\mu(X'_y)} \leq D^+_{\mu} (\Lambda_1) + \frac{\varepsilon}{2} \qquad \text{for all } y \in Y'.
 \end{equation}
 Since \eqref{eq:stephyp2} holds for the set $\Lambda_2$ and the partition $\{X_y\}_{y\in Y}$, it also holds for a coarser partition $\{X'_y\}_{y\in Y'}$ by \eqref{nest} as any set $X'_{y}$ can be written as a disjoint union of some $X_z$ with $z \in Y$.
 Applying Lemma \ref{regularization} to the set $\Lambda_2$ and the partition $\{X'_{y}\}_{y \in Y'}$ yields a partition $\{\bar X_y\}_{y\in \bar Y}$ of $X$ satisfying for some $\bar R>0$,
  \begin{align}\label{xyz}
 B_{\bar R}(y) & \subseteq \bar X_{y} \subseteq B_{3\bar R}(y) \qquad \text{for all }y\in \bar Y,
 \\
 \label{ep3}
 D^-_{\mu}(\Lambda_2) - \frac\ve{2} &\le \frac{\#(\Lambda_2 \cap \bar X_{y})}{\mu(\bar X_{y})} \le D^{+}_{\mu}(\Lambda_2) + \frac\ve{2} \qquad \text{for all }y\in \bar Y,
 \\
 \label{nestz}
&  \forall y \in Y'\quad  \exists \bar y \in \bar Y \quad X'_y \subseteq \bar X_{\bar y}.
\end{align} 
Since $\bar R>0$ can be chosen to be arbitrarily large, by Lemma \ref{lem:uniform_radii}, we can assume that
 \begin{equation}\label{ep4}
 \mu(\bar X_y) \ge \mu(B_{\bar R}(y)) \ge \frac 1{\ve} \qquad\text{for all } y\in \bar Y.
 \end{equation}
By the nestedness property \eqref{nestz}, we can write each $\bar{X}_y$ as a disjoint union of sets $X'_{y'}$ for certain $y' \in Y'$. Hence, it follows from \eqref{ep2} that
 \begin{equation}\label{ep5}
  D^-_{\mu} (\Lambda_1) - \frac{\varepsilon}{2} \leq \frac{\#(\Lambda_1 \cap \bar X_y)}{\mu(\bar X_y)} \leq D^+_{\mu} (\Lambda_1) + \frac{\varepsilon}{2} \qquad \text{for all } y \in \bar Y.
 \end{equation}
 Combining \eqref{ep3} and \eqref{ep5} with the assumption that $D^-_{\mu}(\Lambda_1) + D^-_{\mu}(\Lambda_2) \ge D^+_{\mu}(\Lambda_1)$ yields
\[
 D^+_{\mu} (\Lambda_1) - \varepsilon \leq \frac{\#(\Lambda_1 \cap \bar X_y)+ \#(\Lambda_2 \cap \bar X_y)}{\mu(\bar X_y)}  \qquad \text{for all } y \in \bar Y.
\]

By  the upper bound in \eqref{ep5} for any $y \in \bar Y$, we can choose a set $\Lambda_y \subseteq \Lambda_2 \cap \bar X_y$ such that 
 \begin{align*} 
  D_\mu^+ (\Lambda_1) - \varepsilon \leq \frac{ \#(\Lambda_1 \cap \bar X_y) + \# \Lambda_y  }{\mu(\bar X_y)} \leq D^+_{\mu} (\Lambda_1) + \frac{\varepsilon}2.
 \end{align*}
 Indeed, taking $\Lambda_y=\emptyset$ guarantees the upper bound, whereas taking $\Lambda_y=  \Lambda_2 \cap \bar X_y$ yields the lower bound. By \eqref{ep4} there exists an intermediate choice of $\Lambda_y$, where both bounds hold.
Define $\Gamma := \Lambda_1 \cup \bigcup_{y \in \overline{Y}} \Lambda_y$.
Then similar arguments as in the proof of  Lemma \ref{sub} yields the required conclusion.
\end{proof}

\section{Frame measure and Beurling density} \label{sec:frame_measure_beurling}
Throughout this section, we denote by $\mathcal{H} \subseteq L^2 (X, \mu)$ a reproducing kernel Hilbert space satisfying the assumptions outlined in \Cref{sec:repro_measure}.

Let $\Lambda \subseteq X$ be countable.
If $\{\gf\}_{\lambda \in \Lambda}$ is a frame for $\mathcal{H}$ with canonical dual frame $\{\df\}_{\lambda \in \Lambda}$, then we define the associated \emph{lower frame measure} and \emph{upper frame measure} by
\begin{align*}
 M^-(\{\gf \}_{\lambda \in \Lambda})  := \liminf_{r \to \infty} \inf_{x \in X} \frac{1}{\#(\Lambda \cap B_r (x))} \sum_{\lambda \in \Lambda \cap B_r (x)} \langle \gf, \df \rangle
\end{align*}
and
\begin{align*}
 M^+(\{\gf \}_{\lambda \in \Lambda})  := \limsup_{r \to \infty} \sup_{x \in X} \frac{1}{\#(\Lambda \cap B_r (x))} \sum_{\lambda \in \Lambda \cap B_r (x)} \langle \gf, \df \rangle,
\end{align*}
 respectively. This notion of frame measure forms a special case of the general notion considered in \cite{balan2007measure}; see also \cite{balan2006density, caspers2023overcompleteness} for closely related notions of frame measures.

We make the following definition.

\begin{definition}
Let $\{g_x \}_{x \in X}$ be a system of vectors in $\mathcal{H}$. We say that $\{g_x \}_{x \in X}$ satisfies the \emph{frame measure/density property} if for each countable set $\Lambda \subseteq X$ such that $\{\gf \}_{\lambda \in \Lambda}$ is a frame for $\mathcal{H}$ the following formulae hold:
  \begin{align} \label{eq:frd}
  M^-(\{\gf \}_{\lambda \in \Lambda}) = \frac{1}{D_0^+ (\Lambda)} \quad \text{and} \quad M^+(\{\gf \}_{\lambda \in \Lambda}) = \frac{1}{D_0^- (\Lambda)}.
 \end{align}
 A frame $\{\gf \}_{\lambda \in \Lambda}$ satisfying \eqref{eq:frd} is said to satisfy the \emph{frame measure/density formulae}.
\end{definition}

The aim of this section is to show that frames satisfying the weak localization property \eqref{eq:wl} and the homogeneous approximation property \eqref{eq:hap} satisfy the frame measure/density property \eqref{eq:frd}. For this, we start with the following lemma.

\begin{lemma} \label{lem:relatively_dense}
Let $\{ g_x \}_{x \in X}$ be a system of vectors in $\mathcal{H}$ such that $\inf_{x \in X} \| g_x \|^2 >0$ and that satisfies the weak localization property \eqref{eq:wl}.

 If $\Lambda \subseteq X$ is countable such that $\{\gf \}_{\lambda \in \Lambda}$ is a Bessel sequence, then $\Lambda$ is relatively separated. If, moreover, $\{ g_x \}_{x \in X}$ satisfies the homogeneous approximation property \eqref{eq:hap} and $\{\gf \}_{\lambda \in \Lambda}$ is a frame for $\mathcal{H}$, then $\Lambda$ is also relatively dense.
\end{lemma}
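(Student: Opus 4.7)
The plan is to apply the Bessel/frame inequality of $\{g_\lambda\}_{\lambda \in \Lambda}$ to the reproducing kernels $k_y$ (which lie in $\mathcal{H}$) in order to pass from vector-norm estimates to pointwise estimates of the form $\sum_\lambda |g_\lambda(y)|^2 \lesssim k(y,y)$ or $\sum_\lambda |g_\lambda(y)|^2 \gtrsim k(y,y)$, and then combine these with (WL) or (HAP) to localize the sums to small balls. Concretely, for any $y \in X$ and any $f \in \mathcal{H}$, the identity $f(y) = \langle f, k_y \rangle$ applied with $f = g_\lambda$ gives $g_\lambda(y) = \overline{\langle k_y, g_\lambda\rangle}$, which is the bridge between the abstract frame inequalities and the pointwise values $g_\lambda(y)$ that appear in (WL) and (HAP).

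For the \emph{relatively separated} part, let $c := \inf_{x \in X}\|g_x\|^2 > 0$ and use (WL) to pick $r_0 > 0$ such that for every $\lambda \in \Lambda$,
\[
\int_{B_{r_0}(\lambda)} |g_\lambda(y)|^2 \, d\mu(y) \geq \|g_\lambda\|^2 - c/2 \geq c/2.
\]
Fix $x \in X$ and $r > 0$. Since $B_{r_0}(\lambda) \subseteq B_{r+r_0}(x)$ for every $\lambda \in \Lambda \cap B_r(x)$, summing the above over $\lambda \in \Lambda \cap B_r(x)$ yields a lower bound proportional to $\#(\Lambda \cap B_r(x))\cdot c/2$. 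Conversely, the Bessel bound $B$ applied with test function $k_y$ combined with $g_\lambda(y) = \overline{\langle k_y, g_\lambda\rangle}$ and the diagonal condition \eqref{eq:dc} gives the pointwise estimate
\[
\sum_{\lambda \in \Lambda} |g_\lambda(y)|^2 = \sum_{\lambda \in \Lambda} |\langle k_y, g_\lambda\rangle|^2 \leq B\,k(y,y) \leq B C_2, \qquad y \in X.
\]
Integrating this over $B_{r+r_0}(x)$ and using Fubini, the two inequalities together yield
\[
\tfrac{c}{2}\,\#(\Lambda \cap B_r(x)) \leq B C_2\, \mu(B_{r+r_0}(x)).
\]
Invoking \Cref{lem:wad} with $r' = r_0$ to compare $\mu(B_{r+r_0}(x))$ with $\mu(B_r(x))$ uniformly in $x$ (for $r$ sufficiently large) produces a bound of the form $\#(\Lambda \cap B_r(x)) \leq C_r\, \mu(B_r(x))$, establishing relative separation.

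For the \emph{relatively dense} part, apply the lower frame bound $A$ to $f = k_x$: the reproducing property and \eqref{eq:dc} give
\[
\sum_{\lambda \in \Lambda} |g_\lambda(x)|^2 = \sum_{\lambda \in \Lambda} |\langle k_x, g_\lambda\rangle|^2 \geq A\,\|k_x\|^2 = A\,k(x,x) \geq A C_1.
\]
Using (HAP) with $\varepsilon^2 = AC_1/2$, pick $r_0 > 0$ so that $\sum_{\lambda \in \Lambda \setminus B_{r_0}(x)} |g_\lambda(x)|^2 < AC_1/2$ uniformly in $x$, whence
\[
\sum_{\lambda \in \Lambda \cap B_{r_0}(x)} |g_\lambda(x)|^2 > AC_1/2 > 0,
\]
which forces $\Lambda \cap B_{r_0}(x) \neq \emptyset$ for every $x \in X$, i.e.\ $\Lambda$ is relatively dense.

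There is no real obstacle beyond bookkeeping; the only subtle point is the uniform comparison of $\mu(B_{r+r_0}(x))$ and $\mu(B_r(x))$, which is precisely what the weak annular decay property (through \Cref{lem:wad}) delivers. Passing through the identity $g_\lambda(y) = \overline{\langle k_y, g_\lambda\rangle}$ is what allows the Bessel/frame inequalities in $\mathcal{H}$ to be combined cleanly with the pointwise decay/concentration hypotheses (WL) and (HAP).
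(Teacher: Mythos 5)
Your proposal is correct and is essentially the argument the paper has in mind (the paper's proof for relative density is the same computation, phrased as a contradiction, and for relative separation the paper simply cites \cite[Lemma~3.7]{fuehr2017density}, whose proof proceeds exactly as your integrated-Bessel-bound argument). One small remark: in the separation step the interchange of sum and integral is justified by Tonelli for nonnegative terms rather than Fubini, and the final comparison $\mu(B_{r+r_0}(x)) \le 2\mu(B_r(x))$ via \Cref{lem:wad} holds only for $r$ sufficiently large, which is exactly what the definition of relative separation permits; both points are minor and your argument is sound.
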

\begin{proof} 
The fact that $\Lambda$ is relatively separated whenever $\{\gf \}_{\lambda \in \Lambda}$ is a Bessel sequence follows from an adaptation of the proof of \cite[Lemma 3.7]{fuehr2017density}, which concerns only Bessel sequences of reproducing kernels.

Indeed, let $\varepsilon^2= \tfrac12 \inf_{x\in X} ||g_x||^2$. By the diagonal condition \eqref{eq:dc}, there exist constants $C_1,C_2 > 0$ such that $C_1 \le ||k_x||^2 \le C_2$ for all $x \in X$. 
By the weak weak localization property \eqref{eq:wl}, there exists $r_1=r_1(\ve)>0$ such that for all $x\in X$,
\[
\int_{X \setminus B_{r_1}(x)} |g_x(y)|^2 \; d\mu(y) \le \ve^2.
\]
Thus,
\begin{equation}\label{f1}
\int_{B_{r_1}(x)} |g_x(y)|^2 \; d\mu(y) = ||g_x||^2 - \int_{X \setminus B_{r_1}(x)} |g_x(y)|^2 \; d\mu(y) \ge \ve^2.
\end{equation}
Since $\{\gf \}_{\lambda \in \Lambda}$ is a Bessel sequence, there exists $B>0$ such that for all $y\in X$,
\begin{equation}\label{f2}
\sum_{\lambda \in \Lambda} |g_\lambda(y)|^2 = \sum_{\lambda \in \Lambda} |\lan g_\lambda, k_y \ran|^2 \le B ||k_y||^2 \le BC_2.
\end{equation}
Since the measure $\mu$ is doubling at large scales \eqref{eq:doubling}, there exists $r_2>0$ and $C_d > 0$ such that for all $r\ge r_2$,
\[
\mu(B_{2r}(x)) \le C_d \mu(B_r(x)) \qquad\text{for all }x\in X.
\]
Let $x\in X$ and $r\ge \max\{r_1,r_2\}$. By \eqref{f1} and \eqref{f2}, we have
\[
\begin{aligned} 
\ve^2 \#(\Lambda \cap B_r(x)) & \le \sum_{\lambda \in \Lambda \cap B_r(x)} \int_{B_r(\lambda)} |g_\lambda(y)|^2 \; d\mu(y) 
\le \sum_{\lambda \in \Lambda \cap B_r(x) }\int_{B_{2r}(x)} |g_\lambda(y)|^2 \; d\mu(y) 
\\
& \le \int_{B_{2r}(x)}  \sum_{\lambda \in \Lambda }|g_\lambda(y)|^2 \; d\mu(y) \le B C_2 C_d \mu(B_{r}(x)).
\end{aligned}
\]
This shows that $\Lambda$ is relatively separated.

Suppose next that $\{\gf \}_{\lambda \in \Lambda}$ is a frame with lower frame bound $A > 0$. Let $0 < \varepsilon' < A C_1$ be arbitrary.  Then, by the homogeneous approximation property \eqref{eq:hap}, there exists $r_0 = r_0(\varepsilon') > 0$ such that
\[
 \sup_{x \in X} \sum_{\lambda \in \Lambda \setminus B_{r_0} (x)} |\langle \gf, k_{x} \rangle |^2 =  \sup_{x \in X} \sum_{\lambda \in \Lambda \setminus B_{r_0} (x)} | \gf(x)|^2 < \varepsilon'.
\]
Arguing by contradiction, assume that $\Lambda$ is not relatively dense. Then, given $r \geq r_0$, there exists $x \in X$ such that $\Lambda \cap B_r (x) = \emptyset$. Hence,
\[
 A C_1 \leq A \| k_x \|^2 \leq \sum_{\lambda \in \Lambda} |\langle \gf, k_{x} \rangle |^2 = \sum_{\lambda \in \Lambda \setminus B_r (x)} |\langle \gf, k_{x} \rangle |^2 < \varepsilon',
\]
which is a contradiction.
\end{proof}

The following theorem is the main result of this section.

\begin{theorem} \label{thm:framemeasure_rkhs}
Let $\{g_x\}_{x \in X}$ be a system of vectors in $\mathcal{H}$ satisfying $\inf_{x \in X} \| g _x \|^2 > 0$, the weak localization  \eqref{eq:wl}, and the homogeneous approximation property \eqref{eq:hap}.
Suppose $\Lambda \subseteq X$ is a countable set such that $\{\gf \}_{\lambda \in \Lambda}$ is a frame for $\mathcal{H}$. Then $\{\gf \}_{\lambda \in \Lambda}$ satisfies 
  \[ M^-(\{\gf \}_{\lambda \in \Lambda}) = \frac{1}{D_0^+ (\Lambda)} \quad \text{and} \quad M^+(\{\gf \}_{\lambda \in \Lambda}) = \frac{1}{D_0^- (\Lambda)}, \]
so that  $\{g_x\}_{x \in X}$ satisfies the frame measure/density property \eqref{eq:frd}.
\end{theorem}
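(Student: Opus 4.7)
The strategy is a Landau-type double-counting argument comparing the discrete local trace $\sum_{\lambda \in \Lambda \cap B_r(x)} \langle \gf, \df\rangle$ against the continuous local trace $\int_{B_r(x)} k(y,y)\,d\mu(y)$ of the Parseval frame $\{k_y\}_{y \in X}$ of reproducing kernels. Applying the dual-frame reconstruction $k_y = \sum_\lambda \overline{\gf(y)}\,\df$ and evaluating at $y$ yields the pointwise identity $k(y,y) = \sum_{\lambda \in \Lambda} \gf(y)\,\overline{\df(y)}$, valid a.e.\ and absolutely convergent by Cauchy-Schwarz applied to the Bessel sequences $\{\gf\}$ and $\{\df\}$. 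Splitting $\langle \gf,\df\rangle = \int_X \gf\overline{\df}\,d\mu$ along $y \in B_r(x)$ vs.\ $y \notin B_r(x)$, and integrating the kernel identity over $B_r(x)$ while splitting the sum along $\lambda \in \Lambda \cap B_r(x)$ vs.\ $\lambda \notin B_r(x)$, leads to the core identity
\[
\sum_{\lambda \in \Lambda \cap B_r(x)} \langle \gf, \df\rangle - \int_{B_r(x)} k(y,y)\,d\mu(y) = T_1(r,x) - T_2(r,x),
\]
where $T_1 := \sum_{\lambda \in \Lambda \cap B_r(x)} \int_{X \setminus B_r(x)} \gf\overline{\df}\,d\mu$ and $T_2 := \sum_{\lambda \in \Lambda \setminus B_r(x)} \int_{B_r(x)} \gf\overline{\df}\,d\mu$. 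The plan is then to prove $\sup_{x \in X} (|T_1(r,x)| + |T_2(r,x)|)/\mu(B_r(x)) \to 0$ as $r \to \infty$.

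For $T_2$, given $\varepsilon > 0$, choose $r' = r'(\varepsilon)$ from HAP. Interchanging sum and integral and applying pointwise Cauchy-Schwarz gives
\[
|T_2(r,x)| \leq \int_{B_r(x)} \left(\sum_{\lambda \in \Lambda \setminus B_r(x)} |\gf(y)|^2\right)^{\!1/2} \left(\sum_{\lambda \in \Lambda} |\df(y)|^2\right)^{\!1/2} d\mu(y).
\]
For $y \in B_{r-r'}(x)$ and $\lambda \notin B_r(x)$ the triangle inequality yields $d(\lambda, y) > r'$, so HAP bounds the first factor by $\varepsilon$; for $y$ in the annulus $B_r(x) \setminus B_{r-r'}(x)$ the upper frame bound of $\{\gf\}$ gives $\sum_\lambda |\gf(y)|^2 \leq B\, k(y,y) \leq BC_2$. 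The second factor is uniformly bounded by $(C_2/A)^{1/2}$ via the analogous dual Bessel bound $\sum_\lambda |\df(y)|^2 \leq k(y,y)/A$. Combining, $|T_2(r,x)| \leq C\varepsilon\, \mu(B_r(x)) + C'\mu(B_r(x) \setminus B_{r-r'}(x)) = (C\varepsilon + o(1))\mu(B_r(x))$ uniformly in $x$ by \Cref{lem:wad}.

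For $T_1$, I split $\Lambda \cap B_r(x)$ into $\Lambda \cap B_{r-r'}(x)$ and $\Lambda \cap (B_r(x) \setminus B_{r-r'}(x))$. For interior $\lambda \in \Lambda \cap B_{r-r'}(x)$ the inclusion $B_{r'}(\lambda) \subseteq B_r(x)$ combined with WL gives $\|\gf \mathds{1}_{X \setminus B_r(x)}\|_2 < \varepsilon$; together with the uniform bound $\|\df\|_2 \leq 1/\sqrt{A}$ (a standard consequence of the upper frame bound $1/A$ of the dual frame) and termwise Cauchy-Schwarz, the interior contribution is at most $(\varepsilon/\sqrt{A})\, \#(\Lambda \cap B_r(x)) \leq C\varepsilon\mu(B_r(x))$ by the relative separation of $\Lambda$ granted by \Cref{lem:relatively_dense}. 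For boundary indices $\lambda \in \Lambda \cap (B_r(x) \setminus B_{r-r'}(x))$ the trivial bound $\|\gf\|\|\df\| \leq \sqrt{B/A}$ together with the cardinality-measure estimate \eqref{eq:coverlemma} of \Cref{lem:cardinality_measure} bounds the boundary contribution by $C'' \mu(B_{r+\rho}(x) \setminus B_{r-r'-\rho}(x))$, which is $o(\mu(B_r(x)))$ uniformly in $x$ by \Cref{lem:wad}. Hence $|T_1(r,x)| \leq C(\varepsilon + o(1))\mu(B_r(x))$ uniformly.

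Combining these bounds with the diagonal condition \eqref{eq:dc}, which gives $C_1 \mu(B_r(x)) \leq \int_{B_r(x)} k(y,y)\,d\mu(y) \leq C_2 \mu(B_r(x))$, and dividing the core identity by $\int_{B_r(x)} k(y,y)\,d\mu(y)$, shows that $\bigl(\int_{B_r(x)} k(y,y)\,d\mu(y)\bigr)^{-1} \sum_{\lambda \in \Lambda \cap B_r(x)} \langle \gf, \df\rangle \to 1$ uniformly in $x \in X$ as $r \to \infty$. Multiplying through by $\int_{B_r(x)} k(y,y)\,d\mu(y)/\#(\Lambda \cap B_r(x))$ and taking $\liminf_{r\to\infty} \inf_{x\in X}$ and $\limsup_{r\to\infty} \sup_{x\in X}$ respectively yields $M^-(\{\gf\}) = 1/\ubdf(\Lambda)$ and $M^+(\{\gf\}) = 1/\lbdf(\Lambda)$, directly from the definitions of $M^\pm$ and $\lbdf, \ubdf$. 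The main obstacle is maintaining uniformity in $x$ throughout; in particular, the boundary contribution to $T_1$ requires trading a count of points in an annulus for the measure of a slightly fattened annulus via relative separation and the cardinality-measure lemma, with this annular measure shown to vanish uniformly in $x$ as $r \to \infty$ via the weak annular decay \eqref{eq:wad}.
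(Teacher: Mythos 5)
Your proof is correct and follows essentially the same Landau-type double-counting argument as the paper: both compare the local discrete trace $\sum_{\lambda \in \Lambda \cap B_r(x)} \langle \gf, \df\rangle$ to the local continuous trace $\int_{B_r(x)} k(y,y)\,d\mu(y)$ via the pointwise identity $k(y,y)=\sum_\lambda \gf(y)\overline{\df(y)}$, and both use WL, HAP, Cauchy--Schwarz, the cardinality--measure lemma and weak annular decay to show the boundary/off-diagonal errors are negligible. The one bookkeeping difference worth flagging: the paper normalizes the key estimate by $\#(\Lambda\cap B_r(x))$, which makes the interior WL error (your $\varepsilon A^{-1/2}\#(\Lambda\cap B_r(x))$ term) collapse to a clean $\varepsilon A^{-1/2}$ with no further work. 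You instead normalize by $\mu(B_r(x))$, so absorbing that term requires the uniform bound $\#(\Lambda\cap B_r(x))\le C\mu(B_r(x))$ with $C$ independent of $r$. The paper's stated definition of relative separation only produces an $r$-dependent constant $C_r$; to make your argument airtight you should derive the uniform constant by splitting $B_r(x)$ into $B_{\rho+1}(x)$ and the outer annulus and applying \Cref{lem:cardinality_measure} together with \Cref{lem:wad} (or simply switch to the paper's $\#$-normalization, which avoids the issue). Aside from this small point, the decomposition $T_1/T_2$ (points inside escaping the ball vs.\ points outside entering the ball) is a slight repackaging of the paper's $A_1,A_2,A_3$ split by the location of $\lambda$ relative to $B_{r\pm r'}(x)$, and the remaining estimates and the $\liminf/\limsup$ passage are the same.
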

\begin{proof}
Our proof is modelled on the proofs of \cite[Theorem 2.2]{fuehr2017density}, \cite[Theorem 4.1]{mitkovsi2020density} and \cite[Theorem 3.2]{caspers2023overcompleteness}; see also \cite{kolountzakis1996structure, balan2006density, iosevich2006weyl, olevskii2012revisiting} for similar proof ideas.

Let $A, B > 0$ be the frame bounds of $\{\gf\}_{\lambda \in \Lambda}$. Let $\{h_\lambda\}_{\lambda \in \Lambda}$ be its canonical dual frame.
 Fix $\varepsilon > 0$ and $x \in X$, and choose $r' = r'(\varepsilon) > 0$ such that the weak localization \eqref{eq:wl} and the homogeneous approximation property \eqref{eq:hap} assumptions are satisfied. Let $\rho > 0$ be sufficiently large so that the conclusion of \Cref{lem:cardinality_measure} holds. Lastly, since $\{\gf \}_{\lambda \in \Lambda}$ is a frame, we can choose by \Cref{lem:relatively_dense} some $r_0 > 0$ such that for all $r > r_0$
 \[
  1 \leq \# (\Lambda \cap B_{r}(x)) \leq C \mu(B_r(x)) , \quad x \in X,
 \]
where $C = C(r) > 0$ only depends on $r$. Throughout the proof, we let $r > 0$ be sufficiently large so that $r> r_0$ and $r - r' > \rho$.

For $y \in X$, define $F (y) = \sum_{\lambda \in \Lambda} \gf(y) \overline{h_{\lambda}(y)}$, and
\begin{align*}
F_1(y) = \sum_{\lambda \in \Lambda \cap B_{r-r'}(x)} \gf(y) \overline{h_{\lambda}(y)}, \quad F_2 (y) = \sum_{\lambda \in \Lambda \setminus B_{r+r'}(x)}\gf(y) \overline{h_{\lambda}(y)}
\end{align*}
and
\begin{align*}
 F_3 (y) = \sum_{\lambda \in \Lambda \cap B_{r+r'} (x) \setminus B_{r-r'}(x)} \gf(y) \overline{h_{\lambda}(y)},
\end{align*}
so that $F = \sum_{i = 1}^3 F_i$. We consider four steps.
\\~\\
\textbf{Step 1.} In this step, we provide estimates for $\int_{B_r(x)} F_i (y) \; d\mu(y)$ for $i = 1, 2, 3$ by following estimates in the proof of \cite[Theorem 2.2]{fuehr2017density}. 

First, note that
\begin{align*}
 \int_{B_r(x)} F_1 (y) \; d\mu(y) = \int_X \sum_{\lambda \in \Lambda \cap B_{r-r'} (x)}  \gf(y) \overline{h_{\lambda} (y)} \; d\mu(y) - L = \sum_{\lambda \in \Lambda \cap B_{r-r'} (x)}  \langle \gf ,h_{\lambda} \rangle - L ,
 \end{align*}
 where
 \begin{align*}
  L := \sum_{\lambda \in \Lambda \cap B_{r-r'} (x)} \int_{X \setminus B_r(x)} g_{\lambda} (y) \overline{h_{\lambda} (y)} \; d\mu(y).
\end{align*}
For $\lambda \in \Lambda \cap B_{r-r'}(x)$ and $y \in X \setminus B_r(x)$, we have that $d(\lambda, y )>r'$. As such, it follows by the weak localization assumption \eqref{eq:wl} that
\[
\bigg| \int_{X \setminus B_r(x)} \gf(y) \overline{\df(y)} \; d\mu(y) \bigg| \leq \bigg(\int_{X \setminus B_{r'}(\lambda)} |\gf(y) |^2 \; d\mu(y) \bigg)^{1/2} \| \df \|_{L^2} \leq \varepsilon A^{-1/2}
\]
and thus
$
 |L| \leq \varepsilon A^{-1/2} \# (\Lambda \cap B_r (x)).
$

For estimating $\int_{B_r(x)} F_2 \; d\mu$, let $y \in B_r(x)$ and $\lambda \in \Lambda \setminus B_{r+r'}(x)$, so that $d(\lambda, y ) > r'$. Then it follows from the homogeneous approximation property \eqref{eq:hap} that
\[
\sum_{\lambda \in \Lambda \setminus B_{r+r'}(x)} |\gf(y)|^2 \leq \sum_{\lambda \in \Lambda \setminus B_{r'}(y)} |\gf(y)|^2 < \varepsilon^2.
\]
On the other hand, note that
\[
\sum_{\lambda \in \Lambda} |\df(y)|^2 = \sum_{\lambda \in \Lambda} |\langle k_y, \df \rangle|^2 \leq A^{-1} \| k_y \|^2 \leq A^{-1} C_2,
\]
where $C_2 > 0$ is the constant in the diagonal condition \eqref{eq:dc}. In combination, this gives
\begin{align*}
 \bigg| \int_{B_r(x)} F_2 (y) \; d\mu(y) \bigg| &\leq \int_{B_r(x)} \bigg(  \sum_{\lambda \in \Lambda \setminus B_{r+r'}(x)} |\gf(y)|^2 \bigg)^{1/2} \bigg( \sum_{\lambda \in \Lambda} |\df(y) |^2 \bigg)^{1/2} \; d\mu(y) \\
 &\leq \varepsilon (A^{-1} C_2)^{1/2} \mu(B_r(x)).
\end{align*}

Lastly, we estimate $\int_{B_r(x)} F_3 \; d\mu$ by
\begin{align*}
 \int_{B_r(x)} |F_3(y)| \; d\mu(y) &\leq \sum_{\lambda \in \Lambda \cap B_{r+r'}(x) \setminus B_{r-r'}(x)} \int_X |\gf(y) ||\df(y)| \; d\mu(y) \\
 &\leq \sum_{\lambda \in \Lambda \cap B_{r+r'}(x) \setminus B_{r-r'}(x)} \| \gf \| \| \df \| \\
 &\leq B^{1/2} A^{-1/2}  \# (\Lambda \cap (B_{r+r'} (x) \setminus B_{r-r} (x))).
\end{align*}
\\
\textbf{Step 2.} Using the same notation as in Step 1, we can write
\[
 \sum_{\lambda \in \Lambda \cap B_{r-r'}(x)} \langle \gf, \df \rangle = \int_X F_1(y) \; d\mu(y) = \int_{B_r(x)} (F(y) - F_2(y) - F_3(y)) \; d\mu(y) + L.
\]
Therefore, by using the estimates in Step 1, it follows that
\begin{align*}
 &\bigg| \int_{B_r(x)} F(y) \; d\mu(y) - \sum_{\lambda \in \Lambda \cap B_r (x)} \langle \gf , \df \rangle \bigg| \\
 &\quad \quad = \bigg| \int_{B_r(x)} F(y) \; d\mu(y) - \sum_{\lambda \in \Lambda \cap B_{r-r'} (x)} \langle \gf , \df \rangle - \sum_{\lambda \in \Lambda \cap (B_{r}(x) \setminus B_{r-r'} (x))} \langle \gf , \df \rangle \bigg| \\
 &\quad \quad \leq \bigg| \int_{B_r(x)} F_2 (y) \; d\mu(y) \bigg| + \bigg| \int_{B_r(x)} F_3 (y) \; d\mu(y) \bigg| + |L| + \sum_{\lambda \in \Lambda \cap (B_{r}(x) \setminus B_{r-r'} (x))} \langle \gf , \df \rangle
 \\
 &\quad \quad \leq  \varepsilon (A^{-1} C_2)^{1/2} \mu(B_r(x)) + B^{1/2} A^{-1/2} \# (\Lambda \cap (B_{r+r'} (x) \setminus B_{r-r'} (x))) \\
  &\quad \quad \quad \quad + \varepsilon A^{-1/2} \# (\Lambda \cap B_r (x)) + \# (\Lambda \cap (B_{r+r'} (x) \setminus B_{r-r'} (x))) ,
\end{align*}
where the last step used that $\langle \gf, h_{\lambda} \rangle \leq 1$ for all $\lambda \in \Lambda$.
 By \Cref{lem:cardinality_measure}, there exists $C' = C'(\rho, \Lambda) > 1$ such that
\[
 \# (\Lambda \cap B_{r+r'} (x) \setminus B_{r-r'} (x)) \leq C' \mu(B_{r+r'+\rho}(x) \setminus B_{r-r'-\rho}(x)).
\]
Hence,
\begin{align*}
 &\bigg| \int_{B_r(x)} F(y) \; d\mu(y) - \sum_{\lambda \in \Lambda \cap B_r (x)} \langle \gf, \df \rangle \bigg| \\
 &\quad \quad \leq  \varepsilon (A^{-1}C_2)^{1/2} \mu(B_r(x)) + (1+ B^{1/2}A^{-1/2}) C' \mu ( B_{r+r'+\rho} (x) \setminus B_{r-r'-\rho} (x)) \\
 &\quad \quad \quad + \varepsilon A^{-1/2} \# (\Lambda \cap B_r (x)), \numberthis \label{eq:keystep2}
\end{align*}
which is the key estimate to be used in the next steps.
\\~\\
\textbf{Step 3.}
Using that $\{\gf\}_{\lambda \in \Lambda}$ and $\{\df\}_{\lambda \in \Lambda}$ are canonical dual frames, it follows that
\[
 k(y,y) = \langle k_y, k_y \rangle = \sum_{\lambda \in \Lambda} \langle \gf , k_y \rangle \langle k_y, \df \rangle
 = \sum_{\lambda \in \Lambda} \gf (y) \overline{\df (y)} = F(y)
\]
for $y \in X$.

First, note that the estimate \eqref{eq:keystep2}, combined with the fact that $\langle \gf, \df \rangle \leq 1$ for all $\lambda \in \Lambda$, gives
\begin{align*}
 &\frac{1}{\mu(B_r(x))} \int_{B_r(x)} k(y,y) \; d\mu(y) \\
 &\quad \quad \leq \varepsilon (A^{-1}C_2)^{1/2}  + (1+ B^{1/2}A^{-1/2}) C' \frac{\mu  (B_{r+r'+\rho} (x) \setminus B_{r-r'-\rho} (x))}{\mu(B_r(x))} \\
 &\quad \quad \quad \quad  + \big(1 +\varepsilon A^{-1/2} \big) \frac{\# (\Lambda \cap B_r (x))}{\mu(B_r(x))}
\end{align*}
By Lemma \ref{lem:wad}, we have that
\begin{align} \label{eq:annularinproof}
 \lim_{r\to\infty} \sup_{x \in X} \frac{\mu (B_{r+r'+\rho} (x) \setminus B_{r-r'-\rho} (x))}{\mu(B_r(x))}  = 0.
\end{align}
Thus, the above observations easily yield
that
\begin{align} \label{eq:lower_density_proof}
 D^-(\Lambda) \geq \liminf_{r \to \infty} \inf_{x \in X} \frac{1}{\mu(B_r(x))} \int_{B_r(x)} k(y,y) \; d\mu(y) \geq C_1 > 0,
\end{align}
where $C_1 > 0$ is as in the diagonal condition \eqref{eq:dc}.

In addition to the above, the estimate \eqref{eq:keystep2} also yields that
\begin{align*}
& \bigg| \frac{\int_{B_r(x)} k(y,y) \; d\mu(y)}{\#(\Lambda \cap B_r(x))}- \frac{1}{\#(\Lambda \cap B_r(x))} \sum_{\lambda \in \Lambda \cap B_r (x)} \langle \gf, \df \rangle \bigg| \\
 &\quad \quad \leq  \varepsilon A^{-1/2} +  \varepsilon (A^{-1}C_2)^{1/2} \bigg(\frac{\# (\Lambda \cap B_r (x))}{\mu(B_r(x))}\bigg)^{-1} \\
 &\quad \quad \quad \quad + (1+ B^{1/2} A^{-1/2}) C' \frac{\mu ( (B_{r+r'+\rho} (x) \setminus B_{r-r'-\rho} (x)))}{\mu(B_r(x))}  \bigg(\frac{\# (\Lambda \cap B_r (x))}{\mu(B_r(x))}\bigg)^{-1}.
\end{align*}
Therefore, using again \eqref{eq:annularinproof} and the fact that
\[
 \limsup_{r \to \infty} \sup_{x \in X} \bigg(\frac{\# (\Lambda \cap B_r (x))}{\mu(B_r(x))}\bigg)^{-1}  = \frac{1}{D^-(\Lambda)}
\]
by \eqref{eq:lower_density_proof}, it follows that
\begin{align*}
 &\limsup_{r \to \infty} \sup_{x \in X} \bigg| \frac{\int_{B_r(x)} k(y,y) \; d\mu(y)}{\#(\Lambda \cap B_r(x))} - \frac{1}{\#(\Lambda \cap B_r(x))} \sum_{\lambda \in \Lambda \cap B_r (x)} \langle \gf, \df \rangle \bigg| = 0,
\end{align*}
as desired.
\\~\\
\textbf{Step 4.}
Lastly, we show that $M^+(\{\gf \}_{\lambda \in \Lambda}) = 1 / D_0^-(\Lambda)$; the argument for $D_0^+ (\Lambda) = 1/M^- (\{\gf \}_{\lambda \in \Lambda})$ is similar.
For $n \in \mathbb{N}$, choose  $r_n \in (0, \infty)$ increasing and points $x_n \in X$ such that
\[
 M^+ (\{\gf \}_{\lambda \in \Lambda}) = \lim_{n \to \infty} \frac{1}{\#(\Lambda \cap B_{r_n} (x_n))} \sum_{\lambda \in \Lambda \cap B_{r_n} (x_n)} \langle g_{\lambda}, h_{\lambda} \rangle .
\]
Using the conclusion of Step 3, this yields that
\begin{align*}
 M^+ (\{\gf \}_{\lambda \in \Lambda}) = \lim_{n \to \infty} \frac{\int_{B_{r_n}(x_n)} k(y,y) \; d\mu(y)}{\#(\Lambda \cap B_{r_n}(x_n))} \leq \frac{1}{D_0^-(\Lambda)}.
\end{align*}
Conversely, let $(r_n)_{n \in \mathbb{N}}$ be an increasing sequence in $(0, \infty)$ and let $(x_n)_{n \in \mathbb{N}}$ be points in  $X$ such that
\[
D^-_0(\Lambda) = \lim_{n \to \infty} \frac{\# (\Lambda \cap B_{r_n} (x_n))}{\int_{B_{r_n}(x_n)} k(y,y) \; d\mu(y)}. 
\]
Then
\[
\frac{1}{D_0^-(\Lambda)} = \lim_{n \to \infty} \frac{1}{\#(\Lambda \cap B_{r_n} (x_n))} \sum_{\lambda \in \Lambda \cap B_{r_n} (x_n)} \langle g_{\lambda}, h_{\lambda} \rangle \leq M^+ (\{\gf\}_{\lambda \in \Lambda}),
\]
which finishes the proof.
\end{proof}

We finish this section by stating two simple consequences of the frame measure/density formulae.

\begin{lemma} \label{lem:density_rkhs}
 If  $\{\gf \}_{\lambda \in \Lambda}$ is a frame satisfying the frame measure/density formulae \eqref{eq:frd}, then \[ D_0^- (\Lambda) \geq 1.\]
\end{lemma}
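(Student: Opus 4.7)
The proof is short enough that the only real work is pointing out the right reformulation. The plan is to translate the desired density inequality via the frame measure/density formulae \eqref{eq:frd} into a statement about the canonical pairing $\langle g_\lambda, h_\lambda\rangle$, and then observe that this pairing is always bounded by $1$.

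More precisely, by the assumed identity $M^+(\{g_\lambda\}_{\lambda \in \Lambda}) = 1/D_0^-(\Lambda)$, the claim $D_0^-(\Lambda) \geq 1$ is equivalent to $M^+(\{g_\lambda\}_{\lambda \in \Lambda}) \leq 1$ (noting that if $D_0^-(\Lambda) = 0$ then the identity would force $M^+ = \infty$, which is a contradiction once the upper bound on $M^+$ is established). Since
\[
M^+(\{g_\lambda\}_{\lambda \in \Lambda}) = \limsup_{r \to \infty} \sup_{x \in X} \frac{1}{\#(\Lambda \cap B_r(x))} \sum_{\lambda \in \Lambda \cap B_r(x)} \langle g_\lambda, h_\lambda \rangle,
\]
it suffices to prove the pointwise bound $0 \leq \langle g_\lambda, h_\lambda \rangle \leq 1$ for every $\lambda \in \Lambda$.

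For this, recall that if $S$ denotes the frame operator of $\{g_\lambda\}_{\lambda \in \Lambda}$, then $h_\lambda = S^{-1} g_\lambda$ and the system $\{S^{-1/2} g_\lambda\}_{\lambda \in \Lambda}$ is a Parseval frame for $\mathcal{H}$, as recorded in Section \ref{sec:frames}. Hence
\[
\langle g_\lambda, h_\lambda \rangle = \langle g_\lambda, S^{-1} g_\lambda \rangle = \|S^{-1/2} g_\lambda\|^2 \geq 0,
\]
and the standard fact that each vector of a Parseval frame has norm at most $1$ (apply the Parseval identity to $f = S^{-1/2} g_\lambda$ and isolate the $\lambda$-term) gives $\|S^{-1/2} g_\lambda\|^2 \leq 1$. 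Averaging this pointwise bound over $\Lambda \cap B_r(x)$ yields $M^+(\{g_\lambda\}_{\lambda \in \Lambda}) \leq 1$, and the conclusion $D_0^-(\Lambda) \geq 1$ follows from the second identity in \eqref{eq:frd}.

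There is no real obstacle here: the entire content of the lemma is that the frame measure/density property transfers the trivial inequality $\langle g_\lambda, S^{-1} g_\lambda\rangle \leq 1$ into a density lower bound. The only mild care needed is to handle the degenerate case $D_0^-(\Lambda) = 0$, which is ruled out by the boundedness of $M^+$.
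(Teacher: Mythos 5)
Your proof is correct and takes essentially the same approach as the paper, which simply cites the fact that $\langle g_\lambda, h_\lambda \rangle \leq 1$ for all $\lambda$ and concludes $M^+(\{g_\lambda\}_{\lambda\in\Lambda}) \leq 1$. You have merely made explicit the short argument (via the Parseval frame $\{S^{-1/2}g_\lambda\}$) for that bound and noted the degenerate case, which the paper leaves implicit.
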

\begin{proof}
 If $\{\gf \}_{\lambda \in \Lambda}$ is a frame for $\mathcal{H}$ with canonical dual frame $\{\df\}_{\lambda \in \Lambda}$, then $\langle \gf, \df \rangle \leq 1$ for all $\lambda \in \Lambda$, so that $M^+ (\{\gf \}_{\lambda \in \Lambda}) \leq 1$.
\end{proof}

\begin{lemma} \label{lem:density_rkhs2}
Let $\{\gf \}_{\lambda \in \Lambda}$ be a frame for $\mathcal{H}$ satisfying the frame measure/density formulae \eqref{eq:frd} and let   $0<A \leq B < \infty$ be frame bounds for $\{ \gf \}_{\lambda \in \Lambda}$. Suppose that
\[
 0 < c \leq \| g_{\lambda} \|^2 \leq C < \infty.
\]
Then
\[
\frac{A}{C} \leq D_0^- (\Lambda) \leq  D_0^+ (\Lambda)  \leq \frac{B}{c}.
\]
\end{lemma}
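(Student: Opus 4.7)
The plan is to use the spectral bounds on the frame operator to control $\langle g_\lambda, h_\lambda \rangle$ pointwise in $\lambda$, and then transfer these pointwise bounds to the Beurling densities via the frame measure/density formulae \eqref{eq:frd}.

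Concretely, let $S$ denote the frame operator of $\{g_\lambda\}_{\lambda \in \Lambda}$. Since $\{g_\lambda\}_{\lambda \in \Lambda}$ has frame bounds $A$ and $B$, the operator $S$ satisfies $A \, I \leq S \leq B \, I$ on $\mathcal{H}$, and hence $B^{-1} I \leq S^{-1} \leq A^{-1} I$. Using $h_\lambda = S^{-1} g_\lambda$, this yields
\[
 \frac{\|g_\lambda\|^2}{B} \;\leq\; \langle g_\lambda, S^{-1} g_\lambda \rangle \;=\; \langle g_\lambda, h_\lambda \rangle \;\leq\; \frac{\|g_\lambda\|^2}{A}
\]
for every $\lambda \in \Lambda$. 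Combining with the uniform norm bounds $c \leq \|g_\lambda\|^2 \leq C$ gives the key two-sided estimate
\[
 \frac{c}{B} \;\leq\; \langle g_\lambda, h_\lambda \rangle \;\leq\; \frac{C}{A}, \qquad \lambda \in \Lambda.
\]

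Plugging these bounds into the definitions of $M^\pm(\{g_\lambda\}_{\lambda \in \Lambda})$, one obtains $M^-(\{g_\lambda\}_{\lambda \in \Lambda}) \geq c/B$ and $M^+(\{g_\lambda\}_{\lambda \in \Lambda}) \leq C/A$ by averaging and taking liminf/limsup. Applying the frame measure/density formulae \eqref{eq:frd} then converts these into density bounds: $D_0^+(\Lambda) = 1/M^-(\{g_\lambda\}_{\lambda \in \Lambda}) \leq B/c$ and $D_0^-(\Lambda) = 1/M^+(\{g_\lambda\}_{\lambda \in \Lambda}) \geq A/C$. The middle inequality $D_0^-(\Lambda) \leq D_0^+(\Lambda)$ is immediate from the definitions.

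There is no real obstacle here; the argument is essentially spectral calculus for $S$ combined with the identities \eqref{eq:frd}. The only point that deserves a brief verification is that the pointwise estimate on $\langle g_\lambda, h_\lambda \rangle$ indeed passes to the frame measure averages, but this is trivial since the sum in the definition of $M^\pm$ is normalized by $\#(\Lambda \cap B_r(x))$, so uniform bounds on the summands yield the same bounds on the averages and hence on their liminf/limsup.
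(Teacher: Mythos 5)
Your proof is correct and follows essentially the same approach as the paper: spectral bounds on the frame operator $S$ (and hence $S^{-1}$) yield the pointwise estimate $c/B \leq \langle g_\lambda, h_\lambda \rangle \leq C/A$, which passes to the averages defining $M^\pm$ and then to the densities via the formulae \eqref{eq:frd}.
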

\begin{proof}
If $S_{\Lambda} : \mathcal{H} \to \mathcal{H}$ denotes the frame operator of $\{\gf\}_{\lambda \in \Lambda}$, then $A \mathbf{I}_{\mathcal{H}} \leq S_{\Lambda} \leq B \mathbf{I}_{\mathcal{H}}$, and hence
\[
 \langle \gf, \df \rangle = \langle \gf , S_{\Lambda}^{-1} \gf \rangle \leq \frac{1}{A} \| \gf \|^2 \leq \frac{C}{A}, \quad \lambda \in \Lambda.
\]
It follows therefore that \[  1 / D_0^- (\Lambda) = M^+ (\{\gf \}_{\lambda \in \Lambda}) \leq C/A.\]
Similarly, using that $\langle \gf, \df \rangle \geq c / B$, it follows that $ 1 / D_0^+ (\Lambda) \geq c / B$.
\end{proof}

\section{Frames near the critical density} \label{sec:frame_critical}
In this section, we prove the existence of frames of reproducing kernels near the critical density. Throughout, we denote by $\mathcal{H} \subseteq L^2 (X, \mu)$ a reproducing kernel Hilbert space satisfying the assumptions outlined in \Cref{sec:repro_measure}.

We start with stating the following lemma, which is an adaption of \cite[Proposition 2]{balan2006density} to the present setting; see also \cite[Theorem 4.5]{caspers2023overcompleteness}. 

\begin{lemma} \label{lem:near_critical}
Let $\{\gf \}_{\lambda \in \Lambda}$ be a frame for $\mathcal{H}$ with 
$0 < D_0^-(\Lambda) \leq D^+_0(\Lambda) < \infty$. 
Denote by $\{h_{\lambda} \}_{\lambda \in \Lambda}$ the canonical dual frame of $\{\gf \}_{\lambda \in \Lambda}$. For $0 < \alpha < 1$, define the index set $\Lambda_{\alpha} := \{\lambda \in \Lambda : \langle g_{\lambda}, \df \rangle \leq \alpha \}$. Then
\[
 \frac{\alpha - M^+ (\{\gf \}_{\lambda \in \Lambda})}{\alpha} D_0^- (\Lambda) \leq D_0^- (\Lambda_{\alpha}).
\]
\end{lemma}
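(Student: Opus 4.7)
The plan is to relate the counting function of $\Lambda_\alpha$ to the frame measure sum by splitting the sum $\sum_{\lambda \in \Lambda \cap B_r(x)} \langle \gf,\df\rangle$ at the threshold $\alpha$, discarding the nonnegative contribution coming from $\Lambda_\alpha$, and using the strict lower bound $\langle \gf,\df\rangle > \alpha$ on the complement. The key observation I would invoke is that each pairing $\langle \gf,\df\rangle = \langle \gf, S^{-1}\gf\rangle$ is nonnegative, since the frame operator $S$ (and hence $S^{-1}$) is positive.

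Write $N_r(x) := \#(\Lambda \cap B_r(x))$, $N_r^\alpha(x) := \#(\Lambda_\alpha \cap B_r(x))$, and $W_r(x) := \int_{B_r(x)} k(y,y)\,d\mu(y)$. Discarding the $\Lambda_\alpha$-terms from the sum and bounding $\langle \gf,\df\rangle > \alpha$ for $\lambda \in \Lambda \setminus \Lambda_\alpha$, I would get
\[
\sum_{\lambda \in \Lambda \cap B_r(x)} \langle \gf,\df\rangle \;\geq\; \alpha\bigl(N_r(x) - N_r^\alpha(x)\bigr),
\]
which rearranges to
\[
N_r^\alpha(x) \;\geq\; N_r(x) - \tfrac{1}{\alpha} \sum_{\lambda \in \Lambda \cap B_r(x)} \langle \gf,\df\rangle.
\]
Dividing by $W_r(x)$ and factoring out $N_r(x)/W_r(x)$ yields
\[
\frac{N_r^\alpha(x)}{W_r(x)} \;\geq\; \frac{N_r(x)}{W_r(x)}\left(1 - \frac{1}{\alpha}\cdot \frac{1}{N_r(x)}\sum_{\lambda \in \Lambda \cap B_r(x)} \langle \gf,\df\rangle\right).
\]

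Finally, I would take $\inf_{x \in X}$ followed by $\liminf_{r \to \infty}$ on both sides. Taking inf of the parenthesis produces $1 - \tfrac{1}{\alpha}\sup_x(\text{local average})$, whose $\liminf_r$ equals $1 - M^+(\{\gf\}_{\lambda \in \Lambda})/\alpha$ by the definition of $M^+$. Provided both factors are nonnegative we may apply $\inf_x(AB) \geq (\inf_x A)(\inf_x B)$ and then $\liminf_r(X_rY_r) \geq (\liminf_r X_r)(\liminf_r Y_r)$, together giving the desired inequality
\[
D_0^-(\Lambda_\alpha) \;\geq\; D_0^-(\Lambda)\cdot\frac{\alpha - M^+(\{\gf\}_{\lambda \in \Lambda})}{\alpha}.
\]

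The only delicate point, and the one I would treat as the main obstacle, is justifying the two product inequalities in the last step, each of which requires nonnegativity of the factor in parentheses. This is handled by first reducing to the nontrivial case $\alpha > M^+(\{\gf\}_{\lambda \in \Lambda})$ (otherwise the right-hand side of the claim is already $\leq 0$), and then noting that for all sufficiently large $r$ the supremum $\sup_x \tfrac{1}{N_r(x)}\sum_{\lambda \in \Lambda \cap B_r(x)}\langle \gf,\df\rangle$ lies below any fixed constant strictly between $M^+$ and $\alpha$, making the bracketed factor uniformly positive. The hypothesis $D_0^+(\Lambda) < \infty$ (together with $D_0^-(\Lambda) > 0$) ensures that $N_r(x) \to \infty$ uniformly, so all divisions in the argument are legitimate for large $r$.
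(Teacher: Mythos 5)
Your proof is correct, and the core algebraic manipulation is the same as the paper's: split the sum $\sum_{\lambda \in \Lambda \cap B_r(x)} \langle \gf, \df\rangle$ at the threshold $\alpha$, drop the $\Lambda_\alpha$-contribution using $\langle \gf, \df\rangle = \langle \gf, S_\Lambda^{-1}\gf\rangle \geq 0$, and bound the remaining terms below by $\alpha$ each. Where you differ is in how the limits are organized. The paper fixes a near-extremal sequence $(r_n, x_n)$ attaining $D_0^-(\Lambda_\alpha)$, introduces the auxiliary quantity $\widetilde{M}^+ := \limsup_n \frac{1}{\#(\Lambda\cap B_{r_n}(x_n))}\sum_{\lambda \in \Lambda\cap B_{r_n}(x_n)} \langle \gf,\df\rangle \leq M^+$, and passes to a further subsequence along which this limsup is nearly attained; the desired bound then reads off pointwise along that sequence. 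You instead take $\inf_x$ and then $\liminf_r$ directly via the product inequalities $\inf_x(AB) \geq (\inf_x A)(\inf_x B)$ and $\liminf_r(a_r b_r) \geq (\liminf_r a_r)(\liminf_r b_r)$ for nonnegative factors, and then note $\liminf_r\bigl(1 - \tfrac{1}{\alpha}\sup_x (\cdot)\bigr) = 1 - M^+/\alpha$. Both routes are valid; your version trades the paper's careful subsequence bookkeeping for the two elementary product inequalities plus the nonnegativity check, which you correctly identify as the crux and correctly resolve by reducing to $\alpha > M^+(\{\gf\}_{\lambda\in\Lambda})$ and noting that $\sup_x \tfrac{1}{\#(\Lambda\cap B_r(x))}\sum\langle \gf,\df\rangle < \alpha$ for all large $r$. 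One small imprecision at the end: $D_0^-(\Lambda) > 0$ does guarantee $\#(\Lambda\cap B_r(x)) > 0$ for all $x$ once $r$ is large (which is what is actually needed to legitimize the divisions), but it does not by itself give \emph{uniform} divergence $\#(\Lambda \cap B_r(x)) \to \infty$, since nothing in the standing assumptions forces $\mu(B_r(x)) \to \infty$ uniformly in $x$; this stronger claim is not needed anyway.
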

\begin{proof}
If $M^+ (\{\gf \}_{\lambda \in \Lambda}) \geq \alpha$, then the conclusion is trivial, so we may assume that we have $M^+ (\{\gf \}_{\lambda \in \Lambda}) < \alpha$. Let $\varepsilon > 0$ be such that $M^+ (\{\gf \}_{\lambda \in \Lambda}) + \varepsilon < \alpha$. Choose an increasing sequence $(r_n)_{n \in \mathbb{N}}$  in $(0, \infty)$ and points $x_n \in X$ such that
\[
D^-_0 (\Lambda_{\alpha}) = \lim_{n \to \infty} \frac{\#(\Lambda_{\alpha} \cap B_{r_n}(x_n))}{\int_{B_{r_n}(x_n)} k(y,y) \; d\mu(y)},
\]
and define 
\[
\widetilde{M}^+(\{g_{\lambda} \}_{\lambda \in \Lambda} ) := \limsup_{n \to \infty} \frac{1}{\#(\Lambda \cap B_{r_n} (x_n))} \sum_{\lambda \in \Lambda \cap B_{r_n}(x_n)} \langle \gf, \df \rangle.
\]
Note that $0 \leq \widetilde{M}^+(\{g_{\lambda} \}_{\lambda \in \Lambda} ) \leq M^+(\{g_{\lambda} \}_{\lambda \in \Lambda} ) \leq 1$. Choose a subsequence $(r_{n_k})_{k \in \mathbb{N}}$ of $(r_n)_{n \in \mathbb{N}}$ and fix $N \in \mathbb{N}$ so large that
\[
 \bigg| \frac{1}{\#(\Lambda \cap B_{r_{n_k}} (x_{n_k}))} \sum_{\lambda \in \Lambda \cap B_{r_{n_k}}(x_{n_k})} \langle \gf, \df \rangle - \widetilde{M}^+ (\{\gf\}_{\lambda \in \Lambda} )\bigg| < \varepsilon
\]
for all $k \geq N$. 
Then, using the definition of $\Lambda_{\alpha}$, a direct calculation gives for $k \geq N$,
\begin{align*}
 \widetilde{M}^+ (\{\gf\}_{\lambda \in \Lambda} ) + \varepsilon  &\geq \frac{1}{\#(\Lambda \cap B_{r_{n_k}} (x_{n_k}))} \sum_{\lambda \in \Lambda \cap B_{r_{n_k}} (x_{n_k})} \langle \gf, h_{\lambda} \rangle \\
 &= \frac{1}{\#(\Lambda \cap B_{r_{n_k}}(x_{n_k}))} \bigg( \sum_{\lambda \in \Lambda_{\alpha} \cap B_{r_{n_k}} (x_{n_k})}  \langle \gf, \df \rangle + \sum_{\lambda \in (\Lambda \setminus \Lambda_{\alpha} )\cap B_{r_{n_k}} (x_{n_k})} \langle \gf, \df \rangle \bigg) \\
 &\geq \frac{1}{\#(\Lambda \cap B_{r_{n_k}}(x_{n_k}))} \bigg( 0 + \sum_{\lambda \in (\Lambda \setminus \Lambda_{\alpha}) \cap B_{r_{n_k}} (x_{n_k})} \alpha \bigg) \\
 & = \alpha \frac{\# (\Lambda \cap B_{r_{n_k}} (x_{n_k})) - \# (\Lambda_{\alpha} \cap B_{r_{n_k}}(x_{n_k}))}{\#(\Lambda \cap B_{r_{n_k}} (x_{n_k}))},
\end{align*}
and thus
\begin{align*}
 \frac{\#(\Lambda_{\alpha} \cap B_{r_{n_k}}(x_{n_k}))}{\int_{B_{r_{n_k}}(x_{n_k})} k(y,y) d\mu(y)} &\geq \bigg( 1 - \frac{\widetilde{M}^+ (\{\gf \}_{\lambda \in \Lambda} ) + \varepsilon}{\alpha} \bigg) \frac{\#(\Lambda \cap B_{r_{n_k}} (x_{n_k}))}{\int_{B_{n_k}(x_{n_k})} k(y,y) d\mu(y)} \\
 &\geq \bigg( 1 - \frac{M^+ (\{\gf \}_{\lambda \in \Lambda} ) + \varepsilon}{\alpha} \bigg) D^-_0 (\Lambda).
\end{align*}
Since $D_0^-(\Lambda) > 0$, $M^+(\{\gf \}_{\lambda \in \Lambda}) + \varepsilon < \alpha$ and $\varepsilon > 0$ may be chosen arbitrary small, this yields
\[
 D_0^-(\Lambda_{\alpha}) \geq \bigg(1 - \frac{M^+(\{\gf \}_{\lambda \in \Lambda}) }{\alpha} \bigg) D_0^- (\Lambda),
\]
as required.
\end{proof}

The following result is our main ingredient for constructing frames near the critical density.

\begin{proposition} \label{prop:main}
Suppose $\{g_x\}_{x \in X}$ is a system of vectors in $\mathcal{H} \subseteq L^2(X)$ such that
\[
0 < c \leq \| g_x \|^2 \leq C < \infty \quad \text{for all} \quad x \in X,
\]
and that satisfies the frame measure/density formulae \eqref{eq:frd}.

Suppose that $\Lambda \subseteq X$ is a countable set such that $\{\gf \}_{\lambda \in \Lambda}$ is a frame for $\mathcal{H}$ with frame bounds $0 < A \leq B < \infty$. Then, for every $\varepsilon \in (0,1]$, there exists  a constant $\delta = \delta(\varepsilon, c/B) < 1$ with the following property:
If $D^-_0 (\Lambda) > 1+ \varepsilon$, then for all $\eta > 0$ there exists a set $\Lambda' \subseteq \Lambda$ satisfying
\[
D^-_0(\Lambda') > 0, \quad D_0^+(\Lambda') - D^-_0 (\Lambda') < \eta \quad \text{and} \quad D^+_0 (\Lambda \setminus \Lambda') \leq \delta D^+_0(\Lambda)
\]
 and such that $\{ g_{\lambda} \}_{\lambda \in \Lambda \setminus \Lambda'}$ is a frame for $\mathcal{H}$ with frame bounds $A \varepsilon / (\varepsilon + 4)$ and $B$.
\end{proposition}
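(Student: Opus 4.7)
The plan is to exploit the overcompleteness implied by $D_0^-(\Lambda) > 1+\varepsilon$ to produce a ``thin'' subset $\Lambda_\alpha \subseteq \Lambda$ whose canonical Parseval-normalized vectors have small norms, and then apply the selector form of Weaver's conjecture (\Cref{thm:selector}) to extract the desired $\Lambda' \subseteq \Lambda_\alpha$.

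The first step is to translate the density hypothesis into a bound on the frame measure. By the frame measure/density formulae \eqref{eq:frd}, $M^+(\{g_\lambda\}_{\lambda \in \Lambda}) = 1/D_0^-(\Lambda) < 1/(1+\varepsilon)$. Set $\alpha := (2+\varepsilon)/(2+2\varepsilon)$, so that for $\varepsilon \in (0,1]$ both $\alpha > 1/(1+\varepsilon)$ and $\alpha < 4/(\varepsilon+4)$ hold (the latter reducing to $\varepsilon(\varepsilon-2)<0$). Writing $h_\lambda = S_\Lambda^{-1} g_\lambda$ for the canonical dual and $\tilde{g}_\lambda = S_\Lambda^{-1/2} g_\lambda$ for the canonical Parseval frame, so that $\|\tilde{g}_\lambda\|^2 = \langle g_\lambda, h_\lambda \rangle$, I consider
\[
\Lambda_\alpha := \{\lambda \in \Lambda : \langle g_\lambda, h_\lambda \rangle \leq \alpha\}.
\]
By \Cref{lem:near_critical}, $D_0^-(\Lambda_\alpha) \geq \frac{\alpha - M^+(\{g_\lambda\})}{\alpha} D_0^-(\Lambda) \geq \frac{\varepsilon}{2+\varepsilon} D_0^-(\Lambda) > 0$.

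Next, fix $r = r(\varepsilon) \in \mathbb{N}$ sufficiently large that $(1/\sqrt{r}+\sqrt{\alpha})^2 \leq 4/(\varepsilon+4)$, possible since $\sqrt{\alpha} < 2/\sqrt{\varepsilon+4}$. Since $D_0^-(\Lambda_\alpha) > 0$ and $\Lambda_\alpha$ is relatively separated (by \Cref{lem:relatively_dense}), the assumptions \eqref{eq:NDB} and \eqref{eq:wad} allow partitioning $\Lambda_\alpha$ into disjoint finite subsets $\{J_k\}_{k \in K}$ with $\# J_k \geq r$ for every $k$ and of uniformly bounded diameter $\rho$. Applying \Cref{thm:selector} to the Parseval subsystem $\{\tilde{g}_\lambda\}_{\lambda \in \Lambda_\alpha}$ (Bessel bound at most $1$; norms squared at most $\alpha$) with the partition $\{J_k\}$ produces a selector $\Lambda' := J \subseteq \Lambda_\alpha$ with $\#(\Lambda' \cap J_k)=1$, such that the operator $T_{\Lambda'} := \sum_{\lambda \in \Lambda'} \langle \cdot, \tilde{g}_\lambda \rangle \tilde{g}_\lambda$ satisfies $\|T_{\Lambda'}\| \leq 4/(\varepsilon+4)$. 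Consequently, the frame operator $I - T_{\Lambda'}$ of $\{\tilde{g}_\lambda\}_{\lambda \in \Lambda \setminus \Lambda'}$ is bounded below by $(\varepsilon/(\varepsilon+4)) I$, and conjugating by $S_\Lambda^{1/2}$ (using $A I \leq S_\Lambda \leq B I$) shows $\{g_\lambda\}_{\lambda \in \Lambda \setminus \Lambda'}$ is a frame with bounds $A\varepsilon/(\varepsilon+4)$ and $B$.

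For the density estimates, a counting argument analogous to that in \Cref{lem:cardinality_measure} combined with the diameter bound on the $J_k$ and \Cref{lem:wad} gives $D_0^-(\Lambda') \geq D_0^-(\Lambda_\alpha)/r$ (up to a fixed multiplicative constant absorbed into $r$). Using the hypothesis $D_0^-(\Lambda) > 1+\varepsilon$ this yields $D_0^-(\Lambda') \geq \varepsilon(1+\varepsilon)/((2+\varepsilon)r) > 0$. Combining the elementary inequality $D_0^+(\Lambda \setminus \Lambda') \leq D_0^+(\Lambda) - D_0^-(\Lambda')$ with the a priori upper bound $D_0^+(\Lambda) \leq B/c$ supplied by \Cref{lem:density_rkhs2} then gives
\[
D_0^+(\Lambda \setminus \Lambda') \leq \bigg(1 - \frac{\varepsilon(1+\varepsilon)c}{(2+\varepsilon)\,r\,B}\bigg) D_0^+(\Lambda) =: \delta\, D_0^+(\Lambda),
\]
with $\delta < 1$ depending only on $\varepsilon, c, B$ (since $r$ depends only on $\varepsilon$). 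The main technical obstacle is the construction of the partition $\{J_k\}$ with cardinalities at least $r$ and uniform diameter control, together with the accompanying counting argument yielding the selector density lower bound; both require a careful exploitation of \eqref{eq:NDB}, \eqref{eq:wad}, the diagonal condition \eqref{eq:dc}, and the relative separation of $\Lambda_\alpha$.
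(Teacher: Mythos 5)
Your proposal is correct and follows essentially the same strategy as the paper: use \Cref{lem:near_critical} to show $D_0^-(\Lambda_\alpha)>0$ for a thresholded subset $\Lambda_\alpha$, apply the selector theorem \Cref{thm:selector} to the Parseval-normalized subsystem over $\Lambda_\alpha$ partitioned into blocks of cardinality $\geq r$, remove the selector $\Lambda'$ to leave a frame with the claimed bounds via conjugation by $S_\Lambda^{1/2}$, and estimate the resulting density drop. The paper takes $\alpha = 1/(1+\varepsilon/2)$ where you take $\alpha=(2+\varepsilon)/(2+2\varepsilon)$; both lie strictly between $1/(1+\varepsilon)$ and $4/(\varepsilon+4)$ for $\varepsilon\in(0,1]$, so this is an inessential difference. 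One small slip in parameter tracking: when you say that the multiplicative constants arising from the block overshoot (each $J_k$ has size $<2r$) and from converting between $D_0^\pm$ and $D^\pm$ are ``absorbed into $r$,'' the resulting $r$ (hence $\delta$) then depends on $C_1, C_2$ from the diagonal condition as well; the statement lists $\delta=\delta(\varepsilon,C_1,C_2,c,B)$, so you should include $C_1,C_2$ in the dependencies. The partition-and-counting step that you flag as the main technical obstacle is carried out in the paper via a maximal packing of balls $B_R(y)$, a partition $(X_y)$ of $X$ with $B_R(y)\subseteq X_y\subseteq B_{2R}(y)$, and the weak annular decay property to pass from local counts on the $X_y$ to Beurling densities.
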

\begin{proof}
Denote by $\{\df\}_{\lambda \in \Lambda}$ the canonical dual frame of $\{\gf\}_{\lambda \in \Lambda}$.
If $1+\varepsilon < D_0^-(\Lambda)$, then the frame measure/density formulae \eqref{eq:frd} yields
\[
 M^+ (\{\gf\}_{\lambda \in \Lambda})  < \frac{1}{1+\varepsilon}.
\]
Set $\alpha := 1/(1+\varepsilon/2)$ and  $\Lambda_{\alpha} := \{\lambda \in \Lambda : \langle \gf, \df \rangle \leq \alpha \}$. Note that \Cref{lem:density_rkhs2} gives, in particular, $0 < D_0^-(\Lambda) \leq D_0^+ (\Lambda) < \infty$. Hence, \Cref{lem:near_critical} is applicable, and yields
\begin{align*}
 D_0^-(\Lambda_{\alpha})
 &\geq  \; \frac{\alpha - M^+(\{\gf\}_{\lambda \in \Lambda})}{\alpha} D_0^-(\Lambda) \\
 &\geq \frac{1/(1+\varepsilon/2) - 1/(1+\varepsilon)}{1+\varepsilon/2} \; D_0^-(\Lambda) \\
 &= \frac{\varepsilon/2}{(1+\varepsilon/2)^2 (1+\varepsilon)} D_0^-(\Lambda) \\
 &\geq \frac{\varepsilon}{9} D_0^- (\Lambda). \numberthis \label{eq:positive_density}
\end{align*}
Let $S_{\Lambda} : \mathcal{H} \to \mathcal{H}$ be the frame operator of $\{\gf\}_{\lambda \in \Lambda}$. Note that $\{S_{\Lambda}^{-1/2} g_{\lambda} \}_{\lambda \in \Lambda}$ is a Parseval frame for $\mathcal{H}$, hence a Bessel sequence with upper bound $1$, and satisfies
\[
 \big\| S_{\Lambda}^{-1/2} \gf \big\|^2 = \langle \gf, \df \rangle \leq \alpha, \quad \lambda \in \Lambda_{\alpha}.
\]
We aim to apply \Cref{thm:selector} to the Bessel sequence $\{S_{\Lambda}^{-1/2} g_{\lambda} \}_{\lambda \in \Lambda_{\alpha}}$. For this, we choose $r \in \mathbb{N}$ such that
\begin{align} \label{eq:choosingr}
 \bigg(\frac{1}{\sqrt{r}} + \sqrt{\alpha} \bigg)^2 < \frac{1}{1+\varepsilon/4} < 1.
\end{align}
The remainder of the proof is split intro three steps.
\\~\\
\textbf{Step 1.}  In this step, we construct a collection $\{J_k\}_{k \in K}$ of disjoint subsets of $\Lambda_{\alpha}$ with $\# J_k \geq r$ for all $k \in K$. Let $\mu_0$ be the measure given by $d\mu_0(y)=k(y,y) d\mu(y)$, $y\in X$. Note that by \eqref{eq:dc} the measure $\mu_0$ satisfies the standing assumptions \eqref{eq:NDB}--\eqref{eq:doubling}.
Since $D_0^-(\Lambda_{\alpha}) \geq \frac{\varepsilon}{9} D_0^-(\Lambda)$, for all $\varepsilon_1 > 0$, there exists $R_{\varepsilon_1} > 0$ such that, for all $R \geq R_{\varepsilon_1}$ and $x \in X$,
\begin{align*}
 \#(\Lambda_{\alpha} \cap B_R(x)) \geq  \bigg( \frac{\varepsilon}{9} D^- (\Lambda) - \varepsilon_1 \bigg) \mu_0(B_R(x)).
\end{align*}
We fix a small $\varepsilon_1>0$ and some $R \geq R_{\varepsilon_1}$ sufficiently large, so that
 \begin{align} \label{eq:positivecardinality}
 \#(\Lambda_{\alpha} \cap B_R(x)) \geq  \bigg(  \frac{\varepsilon}{9} D^- (\Lambda) - \varepsilon_1 \bigg) \mu_0(B_R(x)) \geq r,
\end{align}
see \Cref{lem:uniform_radii}.

Let $(B_R(y))_{y \in Y}$ be a maximal family of disjoint balls in $X$, so that
\[
 \bigcup_{y \in Y} B_{2R} (y) = X
\]
by maximality.
Choose a partition $(X_y)_{y \in Y}$ of $X$ such that
\begin{align} \label{eq:partition0}
 B_R(y) \subseteq X_y \subseteq B_{2R}(y), \quad y \in Y.
\end{align}
 For such a partition, it follows from \eqref{eq:positivecardinality} that
\[
 \#(\Lambda_{\alpha} \cap X_y) \geq \#(\Lambda_{\alpha} \cap B_R(y) ) \geq r, \quad y \in Y.
\]
As such, we can partition each of the sets $\Lambda_{\alpha} \cap X_y$ into sets $\{J_{y, \ell} \}_{\ell \in L}$ with cardinality $r\leq \# J_{y, \ell} < 2r$ for  $\ell \in L$ and $y \in Y$. Hence, if $K := \{ (y, \ell) \in Y \times L: \Lambda_{\alpha} \cap X_y \neq \emptyset\}$, then the sets  $\{J_k\}_{k \in K}$ yield a partition of $\Lambda_{\alpha}$ satisfying for all $k\in K$,
\begin{align} \label{eq:partition}
 r \leq \# J_k < 2r \quad \text{and} \quad J_k \subseteq X_y \quad \text{for some} \quad y \in Y.
\end{align}
This yields the desired partition.
\\~\\
\textbf{Step 2.} Applying \Cref{thm:selector} to the Bessel sequence $\{S_{\Lambda}^{-1/2} g_{\lambda} \}_{\lambda \in \Lambda_{\alpha}}$ and the partition $\{J_k\}_{k \in K}$ of $\Lambda_{\alpha}$ constructed in Step 1, we obtain a selector $J \subseteq \bigcup_{k \in K} J_k$ satisfying $\# (J \cap J_k) = 1$ for all $k \in K$ and such that $\{S_{\Lambda}^{-1/2} g_{\lambda} \}_{\lambda \in J}$ is a Bessel system with bound $\big(\frac{1}{\sqrt{r}} + \sqrt{\alpha} \big)^2$. In this step, we will show that
\begin{align} \label{eq:densityselector}
\frac{1}{2r}  D_0^-(\Lambda_{\alpha}) \leq D^-_0 (J) \leq \frac{1}{r} D^-_0 (\Lambda_{\alpha}).
\end{align}

In order to show \eqref{eq:densityselector}, we start by showing that
\begin{align} \label{eq:densityselector0}
 \frac{1}{2r} \# (X_y \cap \Lambda_{\alpha}) \leq \# (X_y \cap J) \leq \frac{1}{r} \# (X_y \cap \Lambda_{\alpha} ), \quad y \in Y,
\end{align}
where $(X_y)_{y \in Y}$ is the partition of $X$ satisfying \eqref{eq:partition0} and \eqref{eq:partition}.
For $y \in Y$, define the index set $\widetilde{K}_y := \{ k \in K : J_k \subseteq X_y\}$, so that
\begin{align} \label{eq:discrete_partition}
 X_y \cap \Lambda_{\alpha} = \bigcupdot_{k \in \widetilde{K}_y} J_k.
\end{align}
Since $\#J_k \geq r$ and $\#(J_k \cap J) = 1$ for  $k \in K$, necessarily
$
 r \#(J_k \cap J) \leq \# J_k
$ for $k \in K$.
This, together with the fact that $\{J_k \cap J\}_{k \in \widetilde{K}}$ is a partition of $X_y \cap J \subseteq X_y \cap \Lambda_{\alpha}$ and the identity \eqref{eq:discrete_partition}, yields
\[
 r \# (X_y \cap J) = \sum_{k \in \widetilde{K}_y} r \# (J_k \cap J) \leq \sum_{k \in \widetilde{K}_y} \# J_k = \# (X_y \cap \Lambda_{\alpha}),
\]
which shows the upper bound in \eqref{eq:densityselector0}. For the lower bound in \eqref{eq:densityselector0}, note that we have $2r \#(J_k \cap J) > \#J_k$ for $k \in K$. Similar arguments as above therefore yield that
\[
 2r \# (X_y \cap J) =  \sum_{k \in \widetilde{K}_y} 2r \# (J_k \cap J) > \sum_{k \in \widetilde{K}_y} \# J_k = \# (X_y \cap \Lambda_{\alpha}),
\]
showing the lower bound in \eqref{eq:densityselector0}.

Finally, for showing \eqref{eq:densityselector}, choose $R' > 4R$. On one hand, for $x \in X$, it follows from \eqref{eq:densityselector0} that
\begin{align} \label{eq:cardinality1}
 \frac{1}{2r} \# \bigg( \bigg( \bigcup_{y \in Y: d(y,x) < R' + 2R} X_y \bigg) \cap \Lambda_{\alpha} \bigg) \leq \# \bigg( \bigg( \bigcup_{y \in Y: d(y,x) < R' + 2R} X_y \bigg) \cap J \bigg)
\end{align}
and
\begin{align} \label{eq:cardinality2}
 \# \bigg( \bigg( \bigcup_{y \in Y: d(y,x) < R' - 2R} X_y \bigg) \cap J \bigg)
 \leq \frac{1}{r} \# \bigg( \bigg( \bigcup_{y \in Y: d(y,x) < R' - 2R} X_y \bigg) \cap \Lambda_{\alpha} \bigg).
\end{align}
On the other hand, it follows from \eqref{eq:partition0} and the triangle inequality that
\begin{align} \label{eq:sandwich1}
 B_{R'}(x) \subseteq \bigcup_{y \in Y : d(y,x) < R' + 2R} X_y \subseteq B_{R' + 4R}(x)
\end{align}
and
\begin{align} \label{eq:sandwich2}
 B_{R'- 4R}(x) \subseteq \bigcup_{y \in Y : d(y,x) < R' - 2R} X_y \subseteq B_{R'}(x).
\end{align}
Combining \eqref{eq:cardinality1} and \eqref{eq:sandwich1} yields
\[
 \#(B_{R'+4R}(x) \cap J) \geq \frac{1}{2r} \# (B_{R'}(x) \cap \Lambda_{\alpha})
\]
whereas combining \eqref{eq:cardinality2} and \eqref{eq:sandwich2} gives
\[
 \#(B_{R' - 4R}(x) \cap J) \leq \frac{1}{r} \# (B_{R'}(x) \cap \Lambda_{\alpha}).
\]
Therefore,
\[
 \frac{\#(B_{R'+4R}(x) \cap J)}{\mu_0(B_{R' + 4R}(x))} \geq \frac{1}{2r} \frac{\# (B_{R'}(x) \cap \Lambda_{\alpha})}{\mu_0(B_{R'}(x))} \frac{\mu_0(B_{R'}(x))}{\mu_0(B_{R' + 4R}(x))}
\]
and \[
 \frac{\#(B_{R'-4R}(x) \cap J)}{\mu_0(B_{R' - 4R}(x))} \leq \frac{1}{r} \frac{\# (B_{R'}(x) \cap \Lambda_{\alpha})}{\mu_0(B_{R'}(x))} \frac{\mu_0(B_{R'}(x))}{\mu_0(B_{R' - 4R}(x))}.
\]
This readily implies that $D_0^- (J) \geq \frac{1}{2r} D_0^- (\Lambda_{\alpha})$ and $D_0^-(J) \leq \frac{1}{r} D_0^- (\Lambda_{\alpha})$ since
\[
\lim_{R' \to \infty} \sup_{x \in X} \frac{\mu_0(B_{R'} (x))}{\mu_0(B_{R'+4R} (x))} = 1 = \lim_{R' \to \infty} \sup_{x \in X} \frac{\mu_0(B_{R'} (x))}{\mu_0(B_{R'-4R} (x))}
\]
by \Cref{lem:wad}. 
\\~\\
\textbf{Step 3.} We start by showing that $\{g_{\lambda} \}_{\lambda \in \Lambda \setminus J}$ is a frame for $\mathcal{H}$ with frame bounds $A \varepsilon / (\varepsilon + 4)$ and $B$.
Since \Cref{thm:selector} yields that $\{S_{\Lambda}^{-1/2} g_{\lambda} \}_{\lambda \in J}$ is a Bessel sequence with upper frame bound $(1/\sqrt{r} + \sqrt{\alpha} )^2 < 1/(1+\varepsilon/4)$ (cf. \Cref{eq:choosingr}), the system $\{S_{\Lambda}^{-1/2} g_{\lambda} \}_{\lambda  \in \Lambda \setminus J}$ satisfies
\begin{align*}
\| f \|^2 &\geq  \sum_{\lambda \in \Lambda \setminus J} |\langle f, S_{\Lambda}^{-1/2} g_{\lambda} \rangle |^2 = \sum_{\lambda \in \Lambda} |\langle f, S_{\Lambda}^{-1/2} g_{\lambda} \rangle |^2 - \sum_{\lambda \in J} |\langle f, S_{\Lambda}^{-1/2} g_{\lambda} \rangle |^2 \\
&\geq \bigg(1 - \frac{1}{1+\varepsilon/4} \bigg) \| f \|^2 = \frac{\varepsilon}{\varepsilon + 4} \| f \|^2,
\end{align*}
for all $f \in \mathcal{H}$. Thus, $\{S_{\Lambda}^{-1/2} g_{\lambda} \}_{\lambda \in \Lambda \setminus J}$ forms a frame for $\mathcal{H}$ with lower bound $ \varepsilon / (\varepsilon + 4)$ and upper bound $1$.
Denote by $S_{\Lambda \setminus J} : \mathcal{H} \to \mathcal{H}$  the frame operator of $\{\gf \}_{\lambda \in \Lambda \setminus J}$, and note that
\[
S_{\Lambda \setminus J} = \sum_{\lambda \in \Lambda \setminus J} \langle \cdot , g_{\lambda} \rangle g_{\lambda} = S^{1/2}_{\Lambda} \bigg( \sum_{\lambda \in \Lambda \setminus J} \langle \cdot , S^{-1/2}_{\Lambda} \gf \rangle S^{-1/2}_{\Lambda} \gf \bigg) S^{1/2}_{\Lambda}.
\]
and 
\[
\frac{\varepsilon}{\varepsilon+4} \mathbf{I} \leq \sum_{\lambda \in \Lambda \setminus J} \langle \cdot , S^{-1/2}_{\Lambda} \gf \rangle S^{-1/2}_{\Lambda} \gf \leq \mathbf{I}.
\]
Combining this with $A \mathbf{I} \leq S_{\Lambda} \leq B \mathbf{I}$ yields
\begin{align*}
A \frac{ \varepsilon}{\varepsilon + 4} \textbf{I} &\leq \frac{ \varepsilon}{\varepsilon + 4} S_{\Lambda} = S^{1/2}_{\Lambda} \bigg( \frac{\varepsilon}{\varepsilon + 4}  \textbf{I} \bigg) S^{1/2}_{\Lambda} \leq S_{\Lambda \setminus J} \leq S_{\Lambda} \leq B  \textbf{I},
\end{align*}
which shows that $\{\gf \}_{\lambda \in \Lambda \setminus J}$ is a frame for $\mathcal{H}$ with frame bounds $A \varepsilon / (\varepsilon + 4)$ and $B $.

Without loss of generality, we may assume that $\eta < \min\{ D^-_0(J),  \frac{\varepsilon}{36r} D^-_0 (\Lambda) \}$.
For such a choice of $\eta$, we apply \Cref{sub} to obtain a set $\Lambda' \subseteq J$ such that
\[
 D^-_0(J) - \eta \leq D^-_0(\Lambda') \leq D^+_0(\Lambda') \leq D^-_0 (J).
\]
Note that $\{\gf \}_{\lambda \in \Lambda \setminus \Lambda'}$ forms still a frame for $\mathcal{H}$ with frame bounds $A \varepsilon / (\varepsilon + 4)$ and $B $, that $D^-_0(\Lambda') > 0$ and $D^+_0 (\Lambda') - D^-_0(\Lambda') < \eta$. Therefore, it remains therefore to estimate the density of $\Lambda \setminus \Lambda'$. For this, first observe that
\[ D_0^+(\Lambda \setminus \Lambda') \leq D_0^+ (\Lambda) - D_0^- (\Lambda')\]
and
\begin{align*}
 D_0^-(\Lambda') = D_0^- (J) - \eta \geq \frac{1}{2r}D_0^- (\Lambda_{\alpha}) - \eta \geq \frac{\varepsilon}{18r} D_0^-(\Lambda) - \eta &\geq  \frac{\varepsilon}{36r} D_0^-(\Lambda).
\end{align*}
by \eqref{eq:positive_density}, \eqref{eq:densityselector} and the choice of $\eta$. Therefore,
\begin{align*}
 0< D_0^+(\Lambda \setminus \Lambda') &\leq  \bigg(1 -   \frac{\varepsilon}{36r}\frac{D_0^-(\Lambda)}{D_0^+(\Lambda)} \bigg) D_0^+ (\Lambda) \\
 &\leq  \bigg(1 -   \frac{\varepsilon}{36r}  \frac{1}{D_0^+(\Lambda)} \bigg) D_0^+ (\Lambda) \\
 &\leq \bigg(1 -   \frac{\varepsilon}{36r}  \frac{c}{B} \bigg) D_0^+ (\Lambda),
\end{align*}
where the last two steps used that $D_0^-(\Lambda) \geq 1$ and $1 /D_0^+(\Lambda) \geq c/B$; see \Cref{lem:density_rkhs} and \Cref{lem:density_rkhs2}. Setting
\[
\delta := \bigg(1 -   \frac{\varepsilon}{36r}  \frac{c}{B} \bigg) \in (0,1)
\]
gives the desired result.
\end{proof}

\begin{theorem} \label{thm:main0}
Let $\{g_x\}_{x \in X}$ be a system of vectors in $\mathcal{H} \subseteq L^2(X)$ such that
\[
0 < c \leq \| g_x \|^2 \leq C < \infty \quad \text{for all} \quad x \in X,
\]
and that satisfies the frame measure/density formulae \eqref{eq:frd}. Let $\varepsilon > 0$.

If $\Lambda \subseteq X$ is a countable set such that $\{\gf \}_{\lambda \in \Lambda}$ is a frame for $\mathcal{H}$ with frame bounds $0 < A \leq B < \infty$,
then there exists $\Gamma \subseteq \Lambda$ satisfying
$D_0^-(\Gamma) \leq 1+\varepsilon$ and such that $\{ g_{\gamma} \}_{\gamma \in \Gamma}$ is a frame for $\mathcal{H}$ with frame bounds $c' A$ and $B$ for some constant $c' = c'(\varepsilon, c/B) > 0$.

In addition, if $\varepsilon_0 := D^+_0(\Lambda) - D^-_0 (\Lambda)$, then $\Gamma \subseteq \Lambda$ can be chosen to satisfy $D_0^+(\Gamma) \leq 1+\varepsilon_0 + \varepsilon$.
\end{theorem}
\begin{proof}
Without loss of generality, we will assume throughout the proof that $\varepsilon \in (0,1]$ and  that $D_0^- (\Lambda) > 1+\varepsilon/2$.

First, let $\eta > 0$ be arbitrary. Since $D_0^- (\Lambda) > 1+\varepsilon$/2, an application of \Cref{prop:main} yields a positive constant $\delta = \delta(\varepsilon, c/ B) < 1$ and a subset $\Lambda_1 \subseteq \Lambda$ satisfying $D_0^+ (\Lambda_1) \leq \delta D_0^+ (\Lambda)$ and $D^+_0 (\Lambda_1) - D^-_0 (\Lambda_1) < \eta$ and such that $\{\gf \}_{\lambda \in \Lambda_1}$ is a frame for $\mathcal{H}$ with frame bounds $A \varepsilon / (\varepsilon + 8)$ and $B$.
Applying \Cref{prop:main} recursively, we obtain a sequence of sets
\begin{align} \label{eq:sequence_sets}
\Lambda \supseteq \Lambda_1 \supseteq ... \supseteq \Lambda_n
\end{align}
satisfying $D^+_0 (\Lambda_i) - D^-_0(\Lambda_i) < \eta$ for $i = 1, ..., n$, and
\[
D_0^+ (\Lambda_n) \leq \delta D_0^+ (\Lambda_{n-1}) \leq ... \leq \delta^n D_0^+ (\Lambda),
\]
 and such that $\{\gf \}_{\lambda \in \Lambda_n}$ is a frame for $\mathcal{H}$ with frame bounds $A (\varepsilon / (\varepsilon + 8))^n$ and $B$, as long as $D^-_0(\Lambda_n) > 1 + \varepsilon / 2$.
Note that the number of steps $n$ can be at most $N = N (\varepsilon, c/B) \in \mathbb{N}$, where $N \in \mathbb{N}$ is the smallest integer such that
\[
\delta^{N} \frac{B}{c} \leq 1+\varepsilon/2 .
\]
Indeed, since $D_0^+ (\Lambda)  \leq B / c$ by \Cref{lem:density_rkhs2}, it follows that
\[
D_0^-(\Lambda_{N}) \leq D^+_0(\Lambda_{N}) \leq \delta^{N} D_0^+ (\Lambda) \leq \delta^{N} \frac{B}{c} \leq 1+\varepsilon /2,
\]
Let $n_0 \leq N$ be the smallest integer such that $D^- (\Lambda_{n_0}) \leq 1 + \varepsilon / 2$. Then $\{g_{\lambda} \}_{\lambda \in \Lambda_{n_0}}$ is a frame for $\mathcal{H}$ with $D^- (\Lambda_{n_0}) \leq 1 + \varepsilon / 2$ and frame bounds $A (\varepsilon /(\varepsilon + 8))^N$ and $B$. Hence, setting $\Gamma := \Lambda_{n_0}$ and
$c' := (\varepsilon/(\varepsilon + 8))^N$ yields the first claim.

It remains to prove the additional part on the upper density. For this, we choose $\eta := \frac{\varepsilon}{2} \frac{1}{N}$ and set $\Lambda_0 := \Lambda$.
For $i = 1, ..., n_0$, note that each set in \eqref{eq:sequence_sets} is of the form $\Lambda_i = \Lambda_{i - 1} \setminus \Lambda'_{i - 1}$ for some $\Lambda'_{i-1} \subseteq \Lambda_{i-1}$. Therefore,
\[
 D^+_0(\Lambda_i) \leq D^+_0 (\Lambda_{i-1}) - D^-_0 (\Lambda'_{i - 1}) \quad \text{and} \quad D^-_0(\Lambda_i) \leq D^-_0 (\Lambda_{i-1}) - D^+_0 (\Lambda'_{i - 1}),
\]
so that
\[
 D^+_0 (\Lambda_i) - D_0^- (\Lambda_i) \leq D^+_0 (\Lambda_{i -1}) - D^-_0 (\Lambda_{i - 1}) + \eta.
\]
for $i = 1, ..., n_0$. This shows that the set $\Lambda_{n_0}$ satisfies
\[
 D^+_0 (\Lambda_{n_0}) - D^-_0 (\Lambda_{n_0}) \leq D^+_0 (\Lambda) - D^-_0 (\Lambda) + n_0 \eta = \varepsilon_0 + \frac{\varepsilon}{2} \frac{n_0}{N}.
\]
Since $D^-_0 (\Lambda_{n_0}) \leq 1+ \varepsilon /2$ and $n_0 \leq N$ this implies
\[
 D^+_0 (\Lambda_{n_0}) \leq 1+\varepsilon + \varepsilon_0,
\]
which finishes the proof.
\end{proof}

The following theorem implies, in particular, the existence of frames of reproducing kernels near the critical density:

\begin{theorem} \label{thm:main}
Suppose $\{g_x\}_{x \in X}$ is a frame for $\mathcal{H} \subseteq L^2(X)$ with frame bounds $0<A\leq B < \infty$ satisfying
\[
0 < c \leq \| g_x \|^2 \leq C < \infty \quad \text{for all} \quad x \in X,
\]
and such that frame measure/density formulae \eqref{eq:frd} holds.
Then, for each $\varepsilon > 0$, there exists countable $\Lambda \subseteq X$ satisfying $D_0^+(\Lambda) \leq 1+\varepsilon$  such that $\{ g_{\lambda} \}_{\lambda \in \Lambda}$ is a frame for $\mathcal{H}$.

In addition, if $\{g_x \}_{x \in X}$ is a Parseval frame for $\mathcal{H}$ satisfying $C = \| g _x \|^2$ for all $x \in X$, then the frame bounds of the frame $\{ g_{\lambda} \}_{\lambda \in \Lambda}$ can be chosen to be of the form $c_1(\varepsilon) C$ and $c_2 C$ for constants $c_1(\varepsilon), c_2 > 0$ with $c_1 (\varepsilon)$  depending only on $\varepsilon$.
\end{theorem}
\begin{proof}
 Under the assumptions, an application of \Cref{thm:discretization} yields an absolute constant $c_0 > 0$ and a countable set $\Lambda_1 \subseteq X$ such that $\{ g_{\lambda} \}_{\lambda \in \Lambda_1}$ is a frame for $\mathcal{H}$ with frame bounds $c_0 C (A-\eta)/\eta^2$ and $2 c_0C (B+ \eta)/\eta^2$, where $\eta := A/2$. Hence,
 it follows from \Cref{lem:density_rkhs2} that
 \[
 \frac{c_0  (A-\eta)}{\eta^{2}}   \leq D_0^-(\Lambda_1) \leq D_0^+ (\Lambda_1) \leq  \frac{2c_0 C (B+\eta)}{c \eta^{2}}  ,
 \]
 and thus
 \begin{align*}
  D^+_0 (\Lambda_1) - D^-_0 (\Lambda_1) \leq   c_0 \bigg(2 \frac{C}{c} \frac{B+\eta}{\eta^2} - \frac{A-\eta}{\eta^2} \bigg).
 \end{align*}
 Next, fix $0< \delta = \delta(C/c, A, B) < 1$ such that
 \[
 \frac{A-\delta }{\delta^2 } \geq 2 \frac{C}{c} \frac{B+\eta}{\eta^2} - \frac{A-\eta}{\eta^2}.
 \]
 Then, by an application of \Cref{thm:discretization} with $\delta$,
 we obtain a set $\Lambda_2$ such that $\{\gf\}_{\lambda \in \Lambda_2}$ is a frame for $\mathcal{H}$ with frame bounds $(A-\delta)c_0 C/\delta^2$ and $2(B+\delta)c_0 C/\delta^2$. By Lemma \ref{lem:density_rkhs2} and the choice of $\delta$, we therefore obtain
 \[
  D^-_0(\Lambda_2) \geq \frac{ (A-\delta) c_0 }{\delta^2} \geq D^+_0 (\Lambda_1) - D_0^- (\Lambda_1).
 \]
Thus, an application of Lemma \ref{sup} yields a set $\Lambda \subseteq X$ satisfying $\Lambda_1 \subseteq \Lambda \subseteq \Lambda_1 \cup \Lambda_2$ and
\[
 \varepsilon_0 := D^+_0(\Lambda) - D^-_0(\Lambda) \leq \frac{\varepsilon}{2}.
\]
Note that $\{\gf  \}_{\lambda \in \Lambda}$ is a frame for $\mathcal{H}$ with lower frame bound $A' = c_0 C (A -\eta)/\eta^{2}$ and upper frame bound $B' = 2 c_0C ((B+\eta)/\eta^2 + (B+ \delta)/\delta^2)$. An application of \Cref{thm:main0} therefore yields that there exists $\Gamma \subseteq \Lambda$ satisfying
\[
 D^+_0 (\Gamma) \leq 1 + \varepsilon_0 + \frac{\varepsilon}{2} \leq 1+\varepsilon
\]
and such that
$\{g_{\gamma} \}_{\gamma \in \Gamma}$ is a frame for $\mathcal{H}$ with frame bounds $c' A'$ and $B'$ for some constant $c' = c' (\varepsilon, c/B') > 0$. This proves the first part of the theorem.

For the additional part, note that if $A=B=1$ and $c=C$, then $\eta = 1/2$, and thus we can choose $\delta=1/4$, so that $c' = c'(\varepsilon)$ only depends on $\varepsilon$. Therefore, setting $c_1(\varepsilon) = 2 c_0 c'(\varepsilon)$ and $c_2 = 2 c_0 ((1+\eta)/\eta^2 + (1+ \delta)/\delta^2)=52c_0$ completes the proof.
\end{proof}

\section{Examples} \label{sec:examples}
This section discusses the results obtained in \Cref{sec:frame_critical} for several particular examples of frames and reproducing kernel Hilbert spaces. In particular, we show that these examples satisfy the standing assumptions and localization assumptions introduced in Section \ref{sec:repro_measure}, which allows to apply the results of \Cref{sec:frame_critical} directly to these examples.

\subsection{Exponential frames} \label{sec:exponential}
Let $\Omega \subseteq \mathbb{R}^d$ be a measurable set of finite measure and denote by $\PW_{\Omega}$ the Paley-Wiener space consisting of functions $f \in L^2 (\mathbb{R}^d)$ whose Fourier transform
\[
\widehat{f} (\xi) = \int_{\mathbb{R}^d} f(t) e^{2\pi i \xi \cdot t} \; dt
\]
has essential support inside $\Omega$. 
 For $x \in \mathbb{R}^d$, define the exponential function $e_x$ by
\[
e_{x} (t) = e^{2\pi i x \cdot t}, \quad t \in \mathbb{R}^d.
\]
 By Plancherel's theorem, 
the Fourier transform $\mathcal{F} : \PW_{\Omega} \to L^2 (\Omega)$ is an isometry, 
and 
\begin{align} \label{eq:fourier_inversion}
f (x) = \int_{\Omega} \widehat{f}(\xi) e^{2 \pi i x \cdot \xi} \; d\xi = \langle \widehat{f} , e_{-x} \rangle_{L^2 (\Omega)} = \langle f, \mathcal{F}^{-1} e_{-x} \mathds{1}_{\Omega} \rangle_{L^2 (\mathbb{R}^d)}, \quad x \in \mathbb{R}^d,
\end{align}
so that $\PW_{\Omega}$ is a reproducing kernel Hilbert space with reproducing kernel given by
\[
k_x (y) = (\mathcal{F}^{-1} e_{-x} \mathds{1}_{\Omega})(y) = (\mathcal{F}^{-1} \mathds{1}_{\Omega}) (y - x)
\] 
for $x,y \in \mathbb{R}^d$. In particular, we have $\| k_x \|^2 = |\Omega|$ for all $x \in \mathbb{R}^d$.

From the identity \eqref{eq:fourier_inversion}, it follows that a set of reproducing kernels $\{k_{\lambda} \}_{\lambda \in \Lambda}$ is a frame for $\PW_{\Omega}$ if and only if the exponential system $\{e_{\lambda} \mathds{1}_{\Omega} \}_{\lambda \in \Lambda}$ is a frame for $L^2 (\Omega)$.

We will apply the results from \Cref{sec:frame_critical} to the Paley-Wiener space $\PW_{\Omega}$ on $\mathbb{R}^d$ equipped with Lebesgue measure and Euclidean metric, which clearly satisfy the assumptions on the metric measure space.
In addition, it is readily verified that $\{k_x \}_{x \in \mathbb{R}^d}$ satisfies the weak localization property \eqref{eq:wl}. 
The homogeneous approximation property \eqref{eq:hap} is satisfied whenever $\Omega$ is bounded, see, e.g., \cite[Lemma 1]{grochenig1996landau} or \Cref{lem:hap_coherent} below.
Using this, we show that the frame measure/density property \eqref{eq:frd} also holds for reproducing kernels in Paley-Wiener spaces with possibly unbounded spectra.

\begin{theorem} \label{thm:frd_exponential}
Let $\Omega \subseteq \mathbb{R}^d$ be a measurable set of finite measure. Suppose that $\Lambda \subseteq \mathbb{R}$ is such that $ \{k_{\lambda} \}_{\lambda \in \Lambda}$ is a frame for $\PW_{\Omega}$. Then
\begin{align*}
M^- (\{k_{\lambda} \}_{\lambda \in \Lambda}) = \frac{|\Omega|}{D^+(\Lambda)} \quad \text{and} \quad M^+ (\{k_{\lambda} \}_{\lambda \in \Lambda}) = \frac{|\Omega|}{D^-(\Lambda)}.
\end{align*}
\end{theorem}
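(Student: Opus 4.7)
My plan is to reduce the claim to \Cref{thm:framemeasure_rkhs} by verifying that the reproducing kernels $\{k_x\}_{x \in \mathbb{R}^d}$ of $\PW_\Omega$ satisfy both the weak localization property \eqref{eq:wl} and the homogeneous approximation property \eqref{eq:hap}, even when the spectrum $\Omega$ is unbounded. Once these hold, \Cref{thm:framemeasure_rkhs} will give $M^\pm(\{k_\lambda\}_{\lambda \in \Lambda}) = 1/D_0^\mp(\Lambda)$, and the identity $\|k_x\|^2 = |\Omega|$ (so that $C_1 = C_2 = |\Omega|$ in \eqref{eq:dc}) converts the dimension-free densities via $D_0^\pm(\Lambda) = D^\pm(\Lambda)/|\Omega|$, yielding the claimed formulae.

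Weak localization is straightforward. Writing $K = \mathcal{F}^{-1}\mathds{1}_\Omega \in L^2(\mathbb{R}^d)$, the reproducing kernel takes the translation-invariant form $k_x(y) = K(y-x)$, so that $\int_{\mathbb{R}^d \setminus B_r(x)} |k_x(y)|^2\,dy = \int_{|z| > r} |K(z)|^2\,dz$ tends to $0$ as $r \to \infty$ uniformly in $x$.

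The hard part will be the homogeneous approximation property for unbounded $\Omega$, since $K$ need not decay pointwise at infinity. My approach is to establish \eqref{eq:hap} by spectral truncation. Given a countable set $\Lambda \subseteq \mathbb{R}^d$ such that $\{k_\lambda\}_{\lambda \in \Lambda}$ is Bessel with bound $B$, I set $\Omega_R = \Omega \cap B_R(0)$ and split $K = K_R + K^{>R}$ with $K_R = \mathcal{F}^{-1}\mathds{1}_{\Omega_R}$ and $K^{>R} = \mathcal{F}^{-1}\mathds{1}_{\Omega \setminus \Omega_R}$. Because $\PW_{\Omega_R}$ and $\PW_{\Omega \setminus \Omega_R}$ are orthogonal subspaces of $\PW_\Omega$, both truncated systems $\{K_R(\cdot - \lambda)\}_{\lambda \in \Lambda}$ and $\{K^{>R}(\cdot - \lambda)\}_{\lambda \in \Lambda}$ inherit the Bessel bound $B$ in their respective ambient spaces. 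The inequality $|K|^2 \leq 2|K_R|^2 + 2|K^{>R}|^2$ then yields
\[
 \sum_{\lambda \in \Lambda \setminus B_{r'}(y)} |k_\lambda(y)|^2 \leq 2\sum_{\lambda \in \Lambda \setminus B_{r'}(y)} |K_R(y-\lambda)|^2 + 2\sum_{\lambda \in \Lambda} |K^{>R}(y-\lambda)|^2,
\]
and the Bessel inequality applied to the reproducing kernel $k_y^{>R} \in \PW_{\Omega \setminus \Omega_R}$ bounds the second summand by $B\|k_y^{>R}\|^2 = B|\Omega \setminus \Omega_R|$ uniformly in $y$.

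Given $\varepsilon > 0$, I will first choose $R$ large enough that $2B|\Omega \setminus \Omega_R| < \varepsilon^2/2$. With $R$ fixed, the set $\Omega_R$ is bounded, so the reproducing kernels of $\PW_{\Omega_R}$ satisfy \eqref{eq:hap} (as cited from \cite[Lemma 1]{grochenig1996landau} or \Cref{lem:hap_coherent}); applied to the Bessel system $\{K_R(\cdot - \lambda)\}_{\lambda \in \Lambda}$ this produces $r' > 0$ making the first summand at most $\varepsilon^2/2$ uniformly in $y$. Hence $\{k_x\}_{x \in \mathbb{R}^d}$ satisfies \eqref{eq:hap}, and an application of \Cref{thm:framemeasure_rkhs} finishes the proof.
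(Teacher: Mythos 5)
Your argument is correct but takes a genuinely different route from the paper, and it is worth noting the contrast. The paper deliberately sidesteps \eqref{eq:hap} for unbounded $\Omega$ (the introduction even remarks that exponential systems on unbounded spectra ``do not necessarily satisfy'' the HAP): it projects the frame $\{e_\lambda\mathds{1}_\Omega\}_{\lambda\in\Lambda}$ onto $L^2(\Omega\cap B_n(0))$, applies \Cref{thm:framemeasure_rkhs} in the bounded spectrum space $\PW_{\Omega\cap B_n(0)}$ for each $n$, and then proves a separate claim that $M^{\pm}(\{k_\lambda\}_{\lambda\in\Lambda}) = \lim_{n\to\infty}M^{\pm}(\{k_\lambda^{(n)}\}_{\lambda\in\Lambda})$ via a uniform estimate on $|\langle k_\lambda,h_\lambda\rangle - \langle k_\lambda^{(n)},h_\lambda^{(n)}\rangle|$ in terms of $|\Omega\setminus B_n(0)|^{1/2}$. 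You instead prove \eqref{eq:hap} outright for the reproducing kernels of $\PW_\Omega$ with $|\Omega|<\infty$: the orthogonal decomposition $\PW_\Omega=\PW_{\Omega_R}\oplus\PW_{\Omega\setminus\Omega_R}$ transfers the Bessel bound $B$ to both truncated kernel systems, the tail system is controlled uniformly in $y$ by $\sum_{\lambda}|k_\lambda^{>R}(y)|^2 = \sum_\lambda|\langle k_y^{>R},k_\lambda^{>R}\rangle|^2\le B\,|\Omega\setminus\Omega_R|$, and the bounded spectrum part is handled by the classical HAP. Once HAP and \eqref{eq:wl} hold, \Cref{thm:framemeasure_rkhs} applies directly, and the constant kernel norm $\|k_x\|^2=|\Omega|$ converts $D_0^{\mp}(\Lambda)$ into $D^{\mp}(\Lambda)/|\Omega|$, giving the stated formulae. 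Both arguments are correct; yours is arguably cleaner and yields the slightly stronger conclusion (not made explicit in the paper) that HAP holds for Paley--Wiener spaces with any finite measure spectrum, which turns \Cref{thm:frd_exponential} into a corollary of \Cref{thm:framemeasure_rkhs} rather than a parallel development.
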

\begin{proof}
Since $\{k_{\lambda} \}_{\lambda \in \Lambda}$ is frame for $\PW_{\Omega}$, we have that $\{e_{\lambda} \mathds{1}_{\Omega} \}_{\lambda \in \Lambda}$ is a frame for $L^2 (\Omega)$. Denote by $S_{\Lambda}$ the frame operator of $\{e_{\lambda} \mathds{1}_{\Omega} \}_{\lambda \in \Lambda}$ on $L^2 (\Omega)$, so that its canonical dual frame is given by $\widetilde{h_{\lambda}} = S^{-1}_{\Lambda} e_{\lambda} \mathds{1}_{\Omega}$ for $\lambda \in \Lambda$.
For $n \in \mathbb{N}$, consider the projection $P_n$ on $L^2 (\Omega)$ given by $P_n f = f \cdot \mathds{1}_{B_n(0)}$.  Then $\{e_{\lambda} \mathds{1}_{\Omega \cap B_n(0)} \}_{\lambda \in \Lambda}$ is a frame for $L^2 (\Omega \cap B_n(0))$ with frame operator given by $S^{(n)}_{\Lambda} = P_n S_{\Lambda} P_n$, and thus $\{e_{\lambda} \mathds{1}_{\Omega \cap B_n(0)} \}_{\lambda \in \Lambda}$ and $\{\widetilde{h_{\lambda}} \mathds{1}_{B_n(0)} \}_{\lambda \in \Lambda}$ are canonical dual frames in $L^2 (\Omega \cap B_n(0))$. Consequently, the reproducing kernels $\{k^{(n)}_{\lambda} \}_{\lambda \in \Lambda}$ form a frame for $\PW_{\Omega \cap B_n (0)}$ with canonical dual frame, say,
 $\{h^{(n)}_{\lambda} \}_{\lambda \in \Lambda}$. Since $\Omega \cap B_n(0)$ is bounded, the system $\{k^{(n)}_{\lambda} \}_{\lambda \in \Lambda}$ satisfies the homogeneous approximation property \eqref{eq:hap}, and thus
\Cref{thm:framemeasure_rkhs} yields that
\begin{align} \label{eq:frbounded}
M^+ (\{ k^{(n)}_{\lambda}\}_{\lambda \in \Lambda}) = \frac{|\Omega \cap B_n (0)|}{D^- (\Lambda)} \quad \text{and} \quad M^- (\{ k^{(n)}_{\lambda}\}_{\lambda \in \Lambda}) = \frac{|\Omega \cap B_n (0)|}{D^+ (\Lambda)}
\end{align}
for $n \in \mathbb{N}$. We start by showing the following claim:
\\~\\
 \textbf{Claim:} The upper and lower frame measure of  $\{k_{\lambda}\}_{\lambda \in \Lambda}$ are given by
 \[
 M^+ (\{k_{\lambda}\}_{\lambda \in \Lambda}) = \lim_{n\to \infty} M^+ (\{ k^{(n)}_{\lambda}\}_{\lambda \in \Lambda}) \quad \text{and} \quad
 M^- (\{k_{\lambda}\}_{\lambda \in \Lambda}) = \lim_{n\to \infty} M^- (\{ k^{(n)}_{\lambda}\}_{\lambda \in \Lambda}),
 \]
respectively.

 To ease notation, we define
\begin{align*}
H^{(r)}_{n}(x) &:= \frac{1}{\# (\Lambda \cap B_r (x))} \sum_{\lambda \in \Lambda \cap B_r (x)} \langle k^{(n)}_{\lambda} , h^{(n)}_{\lambda}  \rangle_{L^2 (\mathbb{R}^d)} 
\end{align*}
and
\begin{align*}
 H^{(r)}(x) &:= \frac{1}{\# (\Lambda \cap B_r (x))} \sum_{\lambda \in \Lambda \cap B_r (x)} \langle k_{\lambda} , h_{\lambda}  \rangle_{L^2 (\mathbb{R}^d)} \\
 \end{align*}
for $x \in \mathbb{R}^d$, $r> 0$ and $n \in \mathbb{N}$. With this notation, the claims read
\begin{align} \label{eq:claim_notation}
\limsup_{r \to \infty} \sup_{x \in \mathbb{R}^d} H^{(r)}(x) = \lim_{n \to \infty} \limsup_{r \to \infty} \sup_{x \in \mathbb{R}^d} H^{(r)}_n (x)
\end{align}
and
\begin{align} \label{eq:claim_notation2}
\liminf_{r \to \infty} \inf_{x \in \mathbb{R}^d} H^{(r)}(x) = \lim_{n \to \infty} \liminf_{r \to \infty} \inf_{x \in \mathbb{R}^d} H^{(r)}_n (x)
\end{align}

For showing \eqref{eq:claim_notation} and \eqref{eq:claim_notation2}, note first that 
\begin{align*}
\langle k_{\lambda} , h_{\lambda} \rangle_{L^2 (\mathbb{R}^d)} = \overline{\langle k_{\lambda} , h_{\lambda} \rangle_{L^2 (\mathbb{R}^d)}} = \langle e_{\lambda} , \overline{\mathcal{F} h_{\lambda}} \rangle_{L^2 (\Omega)}
\end{align*}
and, similarly, $\langle k^{(n)}_{\lambda} , h^{(n)}_{\lambda} \rangle_{L^2 (\mathbb{R}^d)}  = \langle e_{\lambda} , \overline{\mathcal{F} h_{\lambda}} \rangle_{L^2 (\Omega \cap B_n (0))}$. Therefore,
\begin{align*}
\big|\langle k_{\lambda}, h_{\lambda} \rangle_{L^2 (\mathbb{R}^d)} - \langle k_{\lambda}^{(n)}, h^{(n)}_{\lambda} \rangle_{L^2 (\mathbb{R}^d)} \big|
&=|\langle e_{\lambda} \mathds{1}_{\Omega}, \overline{\mathcal{F} h_{\lambda}} \cdot \mathds{1}_{\Omega \setminus (\Omega \cap B_n(0))} \rangle_{L^2 (\Omega)} | \\
&\leq \| e_{\lambda} \mathds{1}_{\Omega} \|_{L^{\infty} (\Omega)} \| \mathcal{F} h_{\lambda} \cdot \mathds{1}_{\Omega \setminus (\Omega \cap B_n(0))} \|_{L^1 (\Omega)} \\
&\leq \| e_{\lambda} \mathds{1}_{\Omega} \|_{L^{\infty} (\Omega)} \| \mathcal{F} h_{\lambda} \|_{L^2 (\Omega)} \| \mathds{1}_{\Omega \setminus (\Omega \cap B_n(0))} \|_{L^2 (\Omega)} \\
&\leq C \| \mathds{1}_{\Omega \setminus (\Omega \cap B_n(0))} \|_{L^2 (\Omega)},
\end{align*}
where we used that $\| e_{\lambda} \mathds{1}_{\Omega} \|_{L^{\infty} (\Omega)} \leq 1$ and $ \| \mathcal{F} h_{\lambda} \|_{L^{2} (\Omega)} = \| h_{\lambda} \|_{L^2 (\mathbb{R}^d)} \leq C$ for all $\lambda \in \Lambda$. 
This yields that
\begin{align*}
\big| H^{(r)} (x) - H_n^{(r)} (x) \big| &\leq  \frac{1}{\# (\Lambda \cap B_r (x))} \sum_{\lambda \in \Lambda \cap B_r (x)} \big|\langle k_{\lambda}, h_{\lambda} \rangle_{L^2 (\mathbb{R}^d)} - \langle k_{\lambda}^{(n)}, h^{(n)}_{\lambda} \rangle_{L^2 (\mathbb{R}^d)} \big|
 \\
&\leq C \| \mathds{1}_{\Omega \setminus (\Omega \cap B_n(0))} \|_{L^2 (\Omega)},
\end{align*}
 and hence
\[
H^{(r)} (x) - C \| \mathds{1}_{\Omega \setminus (\Omega \cap B_n(0))} \|_{L^2 (\Omega)} \leq H_n^{(r)} (x) \leq H^{(r)} (x) + C \| \mathds{1}_{\Omega \setminus (\Omega \cap B_n(0))} \|_{L^2 (\Omega)}
\]
for all $x \in \mathbb{R}^d$, $r > 0$ and $n \in \mathbb{N}$. Taking the supremum (resp. infinum) over $x \in \mathbb{R}^d$, letting $r$ tend to $\infty$ and then $n$ tend to $\infty$, the above inequalities immediately gives \eqref{eq:claim_notation} (resp. \eqref{eq:claim_notation2}), and thus settles the claim.

Using the claim and the identity \eqref{eq:frbounded}, we obtain
\[
M^+ (\{k_{\lambda} \}_{\lambda \in \Lambda}) = \lim_{n \to \infty} M^+ (\{ k^{(n)}_{\lambda}\}_{\lambda \in \Lambda}) = \lim_{n \to \infty} \frac{|\Omega \cap B_n(0)|}{D^- (\Lambda)} = \frac{|\Omega|}{D^- (\Lambda)},
\]
and similarly $M^- (\{k_{\lambda} \}_{\lambda \in \Lambda}) = |\Omega| / D^+(\Lambda)$.
\end{proof}

The following result improves the existence of exponential frames on unbounded sets as shown in \cite[Theorem 1]{nitzan2016exponential} by showing that such frames can be chosen near the critical density. In addition, it extends the main result of \cite{bownik2025on} on the existence of frames near the critical density with ``good'' frame bounds from compact sets to general sets of finite measure.

\begin{theorem} \label{thm:exponential}
 Let $\Omega \subseteq \mathbb{R}^d$ be a set of finite measure. For every $\varepsilon > 0$, there exists a set $\Lambda \subseteq \mathbb{R}^d$ satisfying
 \[
  D^+ (\Lambda) \leq (1+\varepsilon) |\Omega|
 \]
and such that $\{e_{\lambda} \mathds{1}_{\Omega} \}_{\lambda \in \Lambda}$ is a frame for $L^2 (\Omega)$ with frame bounds $c_1 (\varepsilon) |\Omega|$ and $c_2|\Omega|$ for constants $c_1(\varepsilon), c_2  >0$ with $c_1 (\varepsilon)$ only depending on $\varepsilon$.
\end{theorem}
\begin{proof}
The reproducing kernels $\{k_{x} \}_{x \in \mathbb{R}^d}$ form a Parseval frame for $\PW_{\Omega}$ and, by Theorem \ref{thm:frd_exponential},  they satisfy the frame measure/density property \eqref{eq:frd}. Since $\| k_x \|^2 = |\Omega|$ for all $x \in \mathbb{R}^d$, an application of Theorem \ref{thm:main} therefore yields a set $\Lambda \subseteq \mathbb{R}^d$  with $D^+ (\Lambda) \leq (1+\varepsilon) |\Omega|$ and such that $\{k_{\lambda} \}_{\lambda \in \Lambda}$ is frame for $\PW_{\Omega}$ with frame bounds $c_1(\varepsilon) |\Omega|$ and $c_2  |\Omega|$ for constants $c_1(\varepsilon) , c_2 > 0$.
Consequently, the system $\{e_{\lambda} \mathds{1}_{\Omega} \}_{\lambda \in \Lambda}$ is a frame for $L^2 (\Omega)$ with the same frame bounds.
\end{proof}

\subsection{Gabor systems} \label{sec:gabor}
For $(x,\xi) \in \mathbb{R}^{2d}$, define the operator $\pi(x, \xi)$ on $L^2 (\mathbb{R}^d)$ by
\[
\pi(x, \xi) f(t) = e^{2\pi i \xi \cdot t} f(t-x), \quad f \in L^2 (\mathbb{R}^d).
\]
A system of the form $\{\pi(\lambda) g \}_{\lambda \in \Lambda}$ for a set $\Lambda \subseteq \mathbb{R}^{2d}$ is often referred to as a \emph{Gabor system} or \emph{Weyl-Heisenberg system}. In order to obtain an associated reproducing kernel Hilbert space, we consider the map
\[
V_{\phi} : L^2 (\mathbb{R}^{d}) \to L^2 (\mathbb{R}^{2d}), \quad V_{\phi} f(z) = \langle f, \pi(z) \phi \rangle , \quad z=(x,\xi),
\]
associated with a fixed vector $\phi \in L^2 (\mathbb{R}^d)$. We will assume that $\phi$ is a unit vector, so that $V_{\phi} : L^2 (\mathbb{R}^d) \to L^2 (\mathbb{R}^{2d})$ is an isometry by the Moyal identity. In this case,
\[
V_{\phi} f (z) = \langle V_{\phi} f, V_{\phi} (\pi(z) \phi) \rangle = \langle V_{\phi} f, L_z^{\sigma} V_{\phi} \phi \rangle,
\]
where $L_z^{\sigma} F(z') = \sigma(z, z' - z) F(z'-z)$ with $\sigma(z, z') = e^{-2\pi i x \cdot x'}$ for $z = (x,\xi)$, $z' = (x', \xi')$. This shows that $V_{\phi} (L^2 (\mathbb{R}^d))$ is a reproducing kernel Hilbert space with reproducing kernel
\[
k(z,z') = \overline{L_z^{\sigma} V_{\phi} \phi (z')}. 
\]
In particular, $k(z,z) = \| \phi \|^2 = 1$, so that the diagonal condition \eqref{eq:dc} is satisfied.  
For $g \in L^2 (\mathbb{R}^d)$, define the vectors 
\[
g_z := V_{\phi} \pi(z) g, \quad  z \in \mathbb{R}^{2d}.
\] 
Since $V_{\phi} g \in L^2 (\mathbb{R}^{2d})$, it follows that $\{g_z \}_{z \in \mathbb{R}^{2d}}$ satisfies the weak localization property \eqref{eq:wl}. Let $\phi_0(x)= 2^{d/4} e^{-\pi |x|^2}$, $x\in \R^d$, be the Gaussian function.
Then, $V_{\phi_0} (L^2 (\mathbb{R}^{d}))$ consists of entire functions on $\mathbb{C}^d = \mathbb{R}^{2d}$ and
the system $\{g_z \}_{z \in \mathbb{R}^{2d}} = \{ V_{\phi_0} \pi(z) g \}_{z \in \mathbb{R}^{2d}}$ satisfies the homogeneous approximation property \eqref{eq:hap}, see, e.g., \cite[Lemma 1]{ramanathan1995incompleteness}, \cite[Section 3]{christensen1999density}, or \Cref{lem:hap_coherent} for a general result.

Applying \Cref{thm:main} to the setting described above yields the following new theorem on Gabor frames. It improves the result of Balan, Casazza, and Landau  on Gabor systems \cite[Theorem 1.2]{balan2011redundancy}, where it is assumed that the generating function belongs to the modulation space $M^1 (\mathbb{R}^d)$.

\begin{theorem} \label{thm:gabor}
 Let $g \in L^2 (\mathbb{R}^d)$ be nonzero. For every $\varepsilon > 0$, there exists $\Lambda \subseteq \mathbb{R}^{2d}$ satisfying $D^+(\Lambda) \leq 1+ \varepsilon$ and such that $\{ \pi (\lambda) g \}_{\lambda \in \Lambda}$ is a frame for $L^2 (\mathbb{R}^d)$.
\end{theorem}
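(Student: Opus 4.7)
The plan is to reduce \Cref{thm:gabor} directly to \Cref{thm:main} applied to the reproducing kernel Hilbert space $\mathcal{H} = V_{\phi_0}(L^2(\mathbb{R}^d))$ on $X = \mathbb{R}^{2d}$ (with Lebesgue measure and the Euclidean metric), using the Gaussian window $\phi_0 = 2^{d/4} e^{-\pi(\cdot)^2}$. As noted in the paragraph preceding \Cref{thm:gabor}, this $\mathcal{H}$ is an RKHS with reproducing kernel satisfying $k(z,z) = \|\phi_0\|^2 = 1$, so the diagonal condition \eqref{eq:dc} holds with $C_1 = C_2 = 1$. The metric measure space $(\mathbb{R}^{2d}, d_{\mathrm{eucl}}, dz)$ clearly satisfies \eqref{eq:NDB} and \eqref{eq:wad}. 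The crucial observation is that in this setting the dimension-free and ordinary Beurling densities coincide: $D_0^{\pm}(\Lambda) = D^{\pm}(\Lambda)$ for every countable $\Lambda \subseteq \mathbb{R}^{2d}$.

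Next I would verify that the family $\{g_z\}_{z \in \mathbb{R}^{2d}} = \{V_{\phi_0} \pi(z) g\}_{z \in \mathbb{R}^{2d}}$ satisfies all hypotheses of \Cref{thm:main}. The norm condition holds since $\|g_z\|^2 = \|\pi(z)g\|^2 = \|g\|^2$ is a fixed positive constant. That $\{g_z\}$ is a continuous frame for $\mathcal{H}$ follows from the Moyal identity: for $F = V_{\phi_0} f \in \mathcal{H}$,
\[
\int_{\mathbb{R}^{2d}} |\langle F, g_z\rangle|^2 \, dz = \int_{\mathbb{R}^{2d}} |\langle f, \pi(z) g\rangle|^2 \, dz = \|g\|^2 \|f\|^2 = \|g\|^2 \|F\|^2 .
\]
The weak localization \eqref{eq:wl} holds because $V_{\phi} g \in L^2(\mathbb{R}^{2d})$ and $g_z$ is a translation of $\overline{V_{\phi_0} g}$ (up to a unimodular factor). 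The homogeneous approximation property \eqref{eq:hap} for this family is exactly the result recalled in the paper just before the statement of \Cref{thm:gabor} (via the Bargmann-Fock identification, or the cited Ramanathan--Steger type argument). Consequently \Cref{thm:framemeasure_rkhs} applies and $\{g_z\}$ satisfies the frame measure/density property \eqref{eq:frd}.

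With all hypotheses of \Cref{thm:main} verified, I invoke it to obtain, for the given $\varepsilon > 0$, a countable set $\Lambda \subseteq \mathbb{R}^{2d}$ such that $D_0^-(\Lambda) \leq 1+\varepsilon$ and $\{g_\lambda\}_{\lambda \in \Lambda}$ is a frame for $\mathcal{H}$. Since $V_{\phi_0}: L^2(\mathbb{R}^d) \to \mathcal{H}$ is unitary, the frame property transfers: $\{\pi(\lambda) g\}_{\lambda \in \Lambda}$ is a frame for $L^2(\mathbb{R}^d)$ with the same frame bounds. Finally, using $k \equiv 1$ on the diagonal, $D^-(\Lambda) = D_0^-(\Lambda) \leq 1+\varepsilon$, which completes the proof.

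There is no substantial obstacle; the theorem is a direct application of the machinery built in \Cref{sec:frame_critical}, the only mild subtlety being the observation that the dimension-free density $D_0^-$ coincides with $D^-$ because the Gaussian window makes the reproducing kernel $k(z,z)$ constantly equal to one, so the density bound supplied by \Cref{thm:main} is exactly the one stated in \Cref{thm:gabor}.
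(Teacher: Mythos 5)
Your proof is correct and follows essentially the same route as the paper: fix the Gaussian $\phi_0$, pass to $\mathcal{H} = V_{\phi_0}(L^2(\mathbb{R}^d))$, verify the diagonal condition and the localization properties \eqref{eq:wl}, \eqref{eq:hap}, invoke \Cref{thm:framemeasure_rkhs} to get the frame measure/density property, and then apply \Cref{thm:main} together with unitarity of $V_{\phi_0}$. You spell out the intermediate verifications (Moyal identity, $\|g_z\|^2 = \|g\|^2$, the identity $D_0^\pm = D^\pm$ when $k(z,z)\equiv 1$) more explicitly than the paper does, but the underlying argument is the same.
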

\begin{proof}
Let $\phi_0(x)= 2^{d/4} e^{-\pi |x|^2}$, $x\in \R^d$. Without loss of generality, we may assume that $g \in L^2 (\mathbb{R}^d)$ is a unit vector. Then $\{ \pi(z) g \}_{z \in \mathbb{R}^{2d}}$ is a Parseval frame for $L^2 (\mathbb{R}^d)$, and hence so is $\{ V_{\phi_0} \pi(z) g \}_{z \in \mathbb{R}^{2d}}$ for $\mathcal{H} = V_{\phi_0} (L^2 (\mathbb{R}^d))$. Since the vectors $\{ V_{\phi_0} \pi(z) g \}_{z \in \mathbb{R}^{2d}}$ satisfy the weak localization \eqref{eq:wl} and the homogeneous approximation property \eqref{eq:hap}, they also satisfy the frame measure/density property \eqref{eq:frd} by \Cref{thm:framemeasure_rkhs}. Hence, an application of Theorem \ref{thm:main} yields a set $\Lambda \subseteq \mathbb{R}^{2d}$ satisfying $D^+ (\Lambda) \leq 1+\varepsilon$ and such that $\{ V_{\phi_0} \pi(\lambda) g \}_{\lambda \in \Lambda}$ is a frame for $\mathcal{H}$, which implies that also $\{\pi(\lambda) g \}_{\lambda \in \Lambda}$ is a frame for $L^2 (\mathbb{R}^d)$.
\end{proof}

\subsection{Coherent systems} \label{sec:coherent}
In this subsection, we consider a general class of frames arising from unitary representations for which density conditions have been studied in, e.g.,  \cite{caspers2023overcompleteness, enstad2025dynamical, enstad2025coherent, fuhr2007sampling, Papageorgiou2025counting, grochenig2008homogeneous, fuehr2017density}. This class contains the previously discussed exponential systems and Gabor systems as particular cases, but in contrast to these two examples the homogeneous approximation property \eqref{eq:hap} or the frame measure/density property \eqref{eq:frd} are in general not known to hold without a mild additional assumption on the generating vector, which we discuss in detail in this section.

Let $G$ be a second countable locally compact group with left Haar measure $\mu$. We assume that $G$ is noncompact, so that $\mu(G) = \infty$, but do assume that $G$ is compactly generated, i.e., there exists a symmetric relatively compact unit neighborhood $U \subseteq G$ such that $G = \bigcup_{n \in \mathbb{N}} U^n$ with $U^n = U \cdot \hdots \cdot U$ denoting the $n$-fold product of $U$. Moreover, we assume that $G$ has \emph{polynomial growth}, in the sense that there exist constants $C,D > 0$ such that
\begin{align} \label{eq:pg}
 \mu(U^n) \leq C n^D, \quad n \in \mathbb{N}.
\end{align}
 Any group of polynomial growth is unimodular.

Following \cite[Section 4.1]{breuillard2014geometry}, we say that a 
metric $d : G \times G \to [0, \infty)$ is \emph{periodic} if it satisfies the following properties:
\begin{enumerate}
 \item[(m1)] left-invariance, i.e., $d(x,y) = d(zx, zy)$ for all $x,y,z \in G$;
 \item[(m2)] metrically proper, i.e., the preimage of a bounded set of the map $y \mapsto d(e,y)$ is bounded;
 \item[(m3)] locally bounded, i.e., the image under $d$ of any compact subset of $G \times G$ is bounded;
 \item[(m4)] asymptotically geodesic, i.e., for every $\varepsilon > 0$, there exists $s>0$ such that, for any $x,y \in G$, there exists a sequence of points $x_1 = x, x_2, ..., x_n = y$ in $G$ such that
 \[
  \sum_{i = 1}^{n-1} d(x_i, x_{i+1}) \leq (1+\varepsilon) d(x,y)
 \]
 and $d(x_1, x_{i+1} ) \leq s$ for all $i = 1, ..., n - 1$.
\end{enumerate}
Examples of periodic metrics on $G$ include, for example, a word metric associated to a compact symmetric generating set, and a left-invariant Riemannian metric or a Carnot-Carath\'eodory metric whenever $G$ is a connected Lie group, cf. \cite[Section 4.3]{breuillard2014geometry}.

The significance of a periodic metric for our purposes is that for such metrics $d$ the triple $(G, d, \mu)$ satisfies all our standing assumptions on the metric measure spaces; see \cite[Corollary 1.6]{breuillard2014geometry} for the weak annular decay property \eqref{eq:wad} and the doubling at large scales \eqref{eq:doubling}.

For defining reproducing kernel Hilbert spaces, we consider a projective unitary representation $(\pi, \Hpi)$ of $G$ on a separable Hilbert space $\Hpi$, i.e., a strongly measurable map $\pi : G \to \mathcal{U}(\Hpi)$ such that $\pi(e) = I_{\Hpi}$ and
\[
 \pi(x)\pi(y) = \sigma(x,y) \pi(xy), \quad x,y \in G,
\]
for a function $\sigma : G \times G \to \mathbb{T}$. Given a vector $\eta \in \Hpi$, we define the associated map $V_{\eta} : \Hpi \to L^{\infty} (G)$ by
\[
 V_{\eta} f(x) = \langle f, \pi(x) \eta \rangle, \quad x \in G.
\]
For any $f, \eta \in \Hpi$, the function $|V_{\eta} f| : G \to [0, \infty)$ is continuous, cf. \cite[Lemma 7.1 and Theorem 7.5]{varadarajan1985geometry}.

A vector $\eta \in \Hpi$ is called \emph{admissible} if $V_{\eta} : \Hpi \to L^2 (G)$ is an isometry. For example, if $\pi$ is irreducible, i.e., the only closed $\pi$-invariant subspaces of $\Hpi$ are $\{0\}$ and $\Hpi$, then any vector $\eta \in \Hpi$ such that $V_{\eta} \eta \in L^2 (G)$ is an admissible vector if and only if $\| \eta \|^2 = d_{\pi}$, where $d_{\pi} > 0$ is the formal dimension of $\pi$, see, e.g., the orthogonality relations in \cite{aniello2006square}.

If $\eta \in \Hpi$ is an admissible vector, then we consider the (closed) subspace $\mathcal{H} := V_{\eta} (\Hpi)$ of $L^2 (G)$. A direct calculation entails that, for $f \in \Hpi$ and $x \in G$,
\[
 V_{\eta} \pi(x) f = L^{\sigma}_x V_g f,
\]
where $L^{\sigma}_x$ denotes the \emph{$\sigma$-twisted left translation operator} on $L^2 (G)$ given by \[ (L^{\sigma}_x F)(y) = \sigma(x, x^{-1} y) F(x^{-1} y) , \quad x, y \in G. \]
Therefore, if $F \in \mathcal{H}$, say $F = V_{\eta} f$, then
\[
 F(x) = \langle f, \pi (x) \eta \rangle = \langle V_{\eta} f, V_{\eta} \pi(x) \eta \rangle = \langle V_{\eta} f, L_x^{\sigma} V_{\eta} \eta \rangle,
\]
which shows that $\mathcal{H}$ is a reproducing kernel Hilbert space with reproducing kernel
\[
 k(x,y) = \overline{L_x^{\sigma} V_{\eta} \eta (y)}, \quad x, y \in G.
\]
In particular,
$
 k(x,x) = \| \eta \|^2,
$
so that $k$ satisfies the diagonal condition \eqref{eq:dc}. In addition, since $V_{\eta} g \in L^2 (G)$ for arbitrary $g \in \Hpi$, it is readily verified that any system of vectors $\{ g_x \}_{x \in G}$ of the form $g_x := V_{\eta} \pi (x) g$ satisfies the weak localization property \eqref{eq:wl}.

The only condition that is not generally satisfied in the present setting is the homogeneous approximation property \eqref{eq:hap}. In order to satisfy this condition, we consider the \emph{local maximal function} of $V_{\eta} f$ defined by
\begin{align} \label{eq:amalgam}
 M V_{\eta} f (x) := \sup_{y \in B_{r_0} (e)} |V_{\eta} f(x y)|,
 \end{align}
where $r_0 > 0$ is some fixed value such that $B_{r_0} (e)$ is a symmetric unit neighbourhood. Then we define the space
\[
\mathcal{B}^2_{\pi} := \big\{ \eta \in \Hpi : M V_{\eta} f \in L^2 (G), \; \forall f \in \Hpi \big\}.
\]
If $\pi$ is irreducible, then admissible vectors in $\mathcal{B}_{\pi}^2$ can always be shown to exist and be dense in $\Hpi$, cf. \cite[Remark 4.2]{fuhr2007sampling} or \cite[Remark 1]{grochenig2008homogeneous}.

For providing a sufficient condition for the homogeneous approximation property \eqref{eq:hap}, we state the following lemma, see, e.g., \cite[Lemma 1]{grochenig2008homogeneous} or \cite[Lemma 2.1]{Papageorgiou2025counting}.

\begin{lemma}[\cite{grochenig2008homogeneous}] \label{lem:sampling}
Let $r_0 > 0$ be such that $B_{r_0} (e)$ is a unit neighborhood. Let $F : G \to \mathbb{C}$ be continuous and let $r > r_0$. If $\Lambda \subseteq G$ is a relatively separated set, then
\[
 \sum_{\lambda \in \Lambda \setminus B_r (e)} |F(\lambda)|^2 \leq \frac{\sup_{y \in G} \# (\Lambda \cap B_{r_0}(y))}{\mu(B_{r_0}(e))} \int_{G\setminus B_{r-r_0}(e)} |MF(z)|^2 \; d\mu(z).
\]
\end{lemma}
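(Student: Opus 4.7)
The plan is to deduce this from a pointwise bound of $|F(\lambda)|^2$ by an integral average of $|MF|^2$ over a small ball around $\lambda$, followed by Fubini and a covering estimate. The key elementary observation is that by left-invariance of $d$ the ball $B_s(e)$ is symmetric, i.e., $y \in B_s(e)$ iff $y^{-1} \in B_s(e)$. Hence for any $z \in \lambda B_s(e) = B_s(\lambda)$, writing $z = \lambda y$ with $y \in B_s(e)$ gives $\lambda = z y^{-1}$ with $y^{-1} \in B_s(e)$, so the very definition of the local maximal function in \eqref{eq:amalgam} yields
\[
|F(\lambda)| = |F(z y^{-1})| \leq M F(z) \quad \text{for every } z \in B_s(\lambda).
\]

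Squaring and integrating over $z \in B_s(\lambda)$, combined with the left-invariance of the Haar measure ($\mu(\lambda B_s(e)) = \mu(B_s(e))$), I obtain
\[
|F(\lambda)|^2 \leq \frac{1}{\mu(B_s(e))} \int_{B_s(\lambda)} |MF(z)|^2 \, d\mu(z).
\]
Summing this over $\lambda \in \Lambda \setminus B_r(e)$ and applying Tonelli's theorem produces the counting function $\#\{\lambda \in \Lambda \setminus B_r(e) : z \in B_s(\lambda)\}$ inside an integral against $|MF(z)|^2$. Since $z \in B_s(\lambda)$ iff $\lambda \in B_s(z)$ (again by left-invariance), this count is bounded by $\#(\Lambda \cap B_s(z))$, which in turn is dominated by the global supremum $\sup_{y \in G}\#(\Lambda \cap B_s(y))$; this quantity is finite because $\Lambda$ is relatively separated.

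It remains to justify restricting the domain of integration to $G \setminus B_{r-s}(e)$. For this I note that if $z \in B_{r-s}(e)$, then for any $\lambda \in B_s(z)$ the triangle inequality gives $d(e,\lambda) \leq d(e,z) + d(z,\lambda) < (r-s) + s = r$, so $\lambda \in B_r(e)$ and hence does not contribute to the sum. Therefore the integrand $|MF(z)|^2 \#\{\lambda \in \Lambda \setminus B_r(e) : z \in B_s(\lambda)\}$ vanishes on $B_{r-s}(e)$, and the desired inequality follows directly. There is no real obstacle in this argument; the only points that need to be handled with care are the correct use of left-invariance (which validates both symmetry of $B_s(e)$ and the measure identity $\mu(\lambda B_s(e)) = \mu(B_s(e))$) and the inclusion $B_s(z) \subseteq B_r(e)$ when $z \in B_{r-s}(e)$, since these are what pin down the constant and the shrunken region of integration appearing in the statement.
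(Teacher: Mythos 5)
Your proof is correct, and it is the standard argument for this kind of Plancherel--Pólya or Wiener-amalgam estimate. The paper cites the lemma from \cite{grochenig2008homogeneous} without reproducing the proof, so there is no in-paper argument to compare against; what you wrote is the expected derivation: a pointwise bound $|F(\lambda)|\le MF(z)$ for $z\in B_s(\lambda)$ (using the symmetry of $B_s(e)$, which holds automatically for a left-invariant metric and is in any case assumed just before \eqref{eq:amalgam}), averaging over $B_s(\lambda)$ with $\mu(B_s(\lambda))=\mu(B_s(e))$ by left-invariance of Haar measure, Tonelli to interchange sum and integral, the covering bound $\#\{\lambda\in\Lambda:z\in B_s(\lambda)\}=\#(\Lambda\cap B_s(z))\le\sup_y\#(\Lambda\cap B_s(y))$, and the triangle-inequality observation that the counting function vanishes on $B_{r-s}(e)$. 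All of the invariance identities you invoke ($B_s(\lambda)=\lambda B_s(e)$, $z\in B_s(\lambda)\iff\lambda\in B_s(z)$, $\mu(\lambda B_s(e))=\mu(B_s(e))$) are justified as you say, and the finiteness of $\sup_y\#(\Lambda\cap B_s(y))$ follows from relative separation together with $\mu(B_R(y))=\mu(B_R(e))$. No gaps.
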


Using \Cref{lem:sampling}, we prove the following lemma, see, e.g., \cite{grochenig2008homogeneous, enstad2025dynamical} for similar arguments.

\begin{lemma} \label{lem:hap_coherent}
 If $g \in \mathcal{B}_{\pi}^2$ is an admissible vector, then the system $\{g_x\}_{x \in G}$ of vectors $g_x = V_g \pi(x) g$ in $\mathcal{H} = V_g (\Hpi)$ satisfies the homogeneous approximation property \eqref{eq:hap}.
 
 In addition, if $G$ is abelian, then for any $h \in \Hpi$, the vectors $\{h_x\}_{x \in G} = \{V_g \pi(x) h\}_{x \in G}$ in $\mathcal{H} = V_g (\Hpi)$ satisfy the homogeneous approximation property \eqref{eq:hap}.
\end{lemma}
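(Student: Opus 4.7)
My plan is to reduce both parts to a direct application of \Cref{lem:sampling}. In both cases, \Cref{lem:relatively_dense} applies: the admissibility of $g$ together with $V_g g \in L^2(G)$ gives the weak localization of the two systems, while $\| g_x \|^2 = \| g \|^2$ and $\| h_x \|^2 = \| h \|^2$ are bounded below by positive constants, so any $\Lambda$ indexing a Bessel sequence is automatically relatively separated. Left-invariance of $d$ together with the projective cocycle identity $\pi(x)^* \pi(\lambda) = c(x, \lambda)\, \pi(x^{-1}\lambda)$ with $|c(x,\lambda)| = 1$ are the two tools I will use repeatedly.

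For part (i), the cocycle identity yields the pointwise formula $|g_\lambda(x)| = |V_g g(x^{-1}\lambda)|$, and the substitution $\mu = x^{-1}\lambda$ transforms the HAP sum into $\sum_{\mu \in x^{-1}\Lambda \setminus B_r(e)} |V_g g(\mu)|^2$. The translate $x^{-1}\Lambda$ inherits the relative separation constants of $\Lambda$ uniformly in $x$, so \Cref{lem:sampling} applied with $F = V_g g$ bounds this sum, uniformly in $x$, by a fixed constant times $\int_{G \setminus B_{r-s}(e)} |MV_g g(z)|^2 \, d\mu(z)$. The hypothesis $g \in \mathcal{B}^2_\pi$ guarantees $M V_g g \in L^2(G)$, so this tail integral tends to zero as $r \to \infty$ by dominated convergence, which is exactly the HAP for $\{g_x\}_{x \in G}$.

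For part (ii), the same cocycle calculation yields $|h_\lambda(x)| = |V_h g(x^{-1}\lambda)|$, so the argument of part (i) transports verbatim provided I can show $MV_h g \in L^2(G)$. This is the point at which the abelian hypothesis enters. Using $\pi(y)^* = \overline{\sigma(y, y^{-1})}\, \pi(y^{-1})$ and commutativity of $G$, a direct computation gives the pointwise identity $|V_h g(y)| = |V_g h(y^{-1})|$. Since $d$ is left-invariant and symmetric one automatically has $B_s(e)^{-1} = B_s(e)$, and hence $MV_h g(x) = MV_g h(x^{-1})$; unimodularity of $G$ (automatic for polynomial growth groups) then yields $\|MV_h g\|_{L^2} = \|MV_g h\|_{L^2} < \infty$, where finiteness uses $g \in \mathcal{B}^2_\pi$ applied to the vector $h$.

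The main obstacle is this last swap $V_h g \leftrightarrow V_g h$: it genuinely relies both on the abelian structure (to commute $\pi(x)^*$ past $\pi(\lambda)$ into a form that acts on $g$ rather than on $h$) and on unimodularity (to transfer the $L^2$ norm under inversion). Without these, the single hypothesis $g \in \mathcal{B}^2_\pi$ would not suffice to control $MV_h g$, and one would need an additional joint condition on $g$ and $h$.
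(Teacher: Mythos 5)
Your proof is correct and follows the same route as the paper: both parts reduce to \Cref{lem:sampling} via the cocycle identity $|g_\lambda(x)| = |V_g g(x^{-1}\lambda)|$ (resp.\ $|h_\lambda(x)| = |V_h g(x^{-1}\lambda)|$), and part (ii) then transfers $L^2$-membership from $M V_g h$ to $M V_h g$ through the equality $M V_h g(x) = M V_g h(x^{-1})$ together with unimodularity. One small bookkeeping correction about where commutativity is actually used: the cocycle step (moving $\pi(x)^*$ past $\pi(\lambda)$) and the pointwise identity $|V_h g(y)| = |V_g h(y^{-1})|$ both hold for an arbitrary projective unitary representation; abelianity is needed only at the subsequent step, to rewrite $\sup_{y\in B_s(e)} |V_g h(y^{-1} x^{-1})|$ as $\sup_{y\in B_s(e)} |V_g h(x^{-1} y^{-1})| = M V_g h(x^{-1})$, using $B_s(e)^{-1} = B_s(e)$.
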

\begin{proof}
Let $\Lambda \subseteq G$ be a countable set such that $\{\gf\}_{\lambda \in \Lambda}$ is a Bessel sequence in $\mathcal{H}$. Then $\Lambda \subseteq G$ is relatively separated by Lemma \ref{lem:relatively_dense}. Fix $\varepsilon > 0$. Since $M V_g g \in L^2 (G)$ by assumption, we can choose $r > 0$ such that
\[
 \frac{\sup_{y \in G} \# (\Lambda \cap B_{r_0}(y))}{\mu(B_{r_0}(e))} \int_{G\setminus B_{r-r_0}(e)} |MV_gg(z)|^2 \; d\mu(z) < \varepsilon^2.
\]
For $x \in G$, a direct calculation gives
\begin{align*}
 \sum_{\lambda \in \Lambda \setminus B_r(x)} |g_{\lambda} (x)|^2
 &= \sum_{\lambda \in \Lambda \setminus B_r (x)} |V_g  g(\lambda^{-1} x)|^2 \\
 &= \sum_{\lambda \in \Lambda \setminus B_r (x)} |V_g  g(x^{-1} \lambda)|^2 \\
 &= \sum_{\lambda \in (x^{-1} \Lambda) \setminus B_r (e)} |V_g  g( \lambda)|^2,
\end{align*}
where the second equality used that $|V_g g(y)| = |V_g g(y^{-1})|$ for all $y \in G$. Since $x^{-1} \Lambda $ is also relatively separated and
$
 \sup_{y \in G} \#(x^{-1} \Lambda \cap B_r(y)) = \sup_{y \in G} \#( \Lambda \cap B_r(y))
$, an application of Lemma \ref{lem:sampling} with $F=|V_gg|$ yields
\[
 \sum_{\lambda \in \Lambda \setminus B_r(x)} |g_{\lambda} (x)|^2 \leq \frac{\sup_{y \in G} \# (\Lambda \cap B_{r_0}(y))}{\mu(B_{r_0}(e))} \int_{G\setminus B_{r-r_0}(e)} |(MV_gg)(z)|^2 \; d\mu(z) < \varepsilon^2,
\]
which shows that $\{g_x\}_{x \in G}$ satisfies the homogeneous approximation property \eqref{eq:hap}.

For the additional part, note that if $G$ is abelian and $h \in \Hpi$, then
\[
M V_h g (x) = \sup_{y \in B_{r_0} (e)} |\langle g, \pi(x y) h \rangle| = \sup_{y \in B_{r_0} (e)} |\langle h, \pi(y^{-1} x^{-1} ) g \rangle | = M V_g h (x^{-1}),
\]
where the last step used that $B_{r_0} (e)$ is symmetric. Since $M V_{g} h \in L^2 (G)$ by assumption, the above calculation and the fact that $G$ is abelian show that
\begin{align*}
\int_G |M V_h g (x) |^2 \; d\mu(x) &= \int_G |M V_g h (x^{-1}) |^2 \; d\mu(x) = \int_G |MV_g h(x)|^2 \; d\mu(x) 
< \infty. 
\end{align*}
 Thus, $MV_{h} g \in L^2 (G)$ whenever $g \in \mathcal{B}_{\pi}^2$ and $h \in \Hpi$. Therefore, a similar calculation as above gives that
\begin{align*}
\sum_{\lambda \in \Lambda \setminus B_r (x)} | h_{\lambda} (x)|^2= \sum_{\lambda \in \Lambda \setminus B_r (x)} | V_h g (x^{-1} \lambda )|^2 
&= \sum_{\lambda \in (x^{-1}\Lambda ) \setminus B_r (e)} | V_h g ( \lambda )|^2,
\end{align*}
so that an application of Lemma \ref{lem:sampling} shows
\[
 \sum_{\lambda \in \Lambda \setminus B_r (x)} | h_{\lambda} (x)|^2 \leq
\frac{\sup_{y \in G} \# (\Lambda \cap B_{r_0} (y))}{\mu(B_{r_0}(e))} \int_{G \setminus B_{r-r_0}(e)} |MV_h g (z)|^2 \; d\mu(z),
\]
which yields that $\{h_x\}_{x \in G}$ satisfies the homogeneous approximation property \eqref{eq:hap}.
\end{proof}

Combining the above observations, we obtain the following theorem on the existence of coherent frames near the critical density, which shows that the necessary density conditions for such frames obtained in, e.g., \cite{caspers2023overcompleteness, enstad2025dynamical, enstad2025coherent, Papageorgiou2025counting, fuehr2017density}, are optimal. In particular, it solves the open question \cite[Question 11]{velthoven2024density} posed by the second author.

\begin{theorem}
Let $(\pi, \Hpi)$ be a projective unitary representation of a compactly generated noncompact group $G$ of polynomial growth. Suppose $g \in \mathcal{B}_{\pi}^2$ is an admissible vector. For every $\varepsilon >0$, there exists $\Lambda \subseteq G$ satisfying
\[
  D^+(\Lambda) \leq (1+\varepsilon) \| g \|^2
\]
and such that
$\{\pi(\lambda) g\}_{\lambda \in \Lambda}$ is a frame for $\Hpi$.
\end{theorem}
\begin{proof}
Since $g \in \Hpi$ is admissible, the system $\{\pi(x) g\}_{x \in G}$ forms a Parseval frame for $\Hpi$, and hence $\{g_x \}_{x \in G} = \{V_{g} \pi(x) g\}_{x \in G}$ is a Parseval frame for $\mathcal{H} = V_g (\Hpi)$. Moreover, since $g \in \mathcal{B}^2_{\pi}$, the system $\{g_x \}_{x \in G}$ satisfies the homogeneous approximation property \eqref{eq:hap} by Lemma \ref{lem:hap_coherent}. Thus an application of \Cref{thm:framemeasure_rkhs} yields that $\{g_x \}_{x \in G}$ satisfies the frame measure/density property \eqref{eq:frd}. Theorem \ref{thm:main} is therefore applicable, and yields an index set $\Lambda \subseteq G$ such that $\{\gf \}_{\lambda \in \Lambda}$ is a frame for $\mathcal{H}$ with density
\[
 D^+ (\Lambda) \leq  (1+\varepsilon) \| g \|^2.
\]
Since $V_g : \Hpi \to \mathcal{H}$ is unitary, it follows that also $\{ \pi(\lambda) g\}_{\lambda \in \Lambda}$ is a frame for $\Hpi$.
\end{proof}

\begin{corollary}
 If $\pi$ is an irreducible projective unitary representation of $G$ and $g \in \mathcal{B}_{\pi}^2$ is nonzero, then for any $\varepsilon > 0$ there exists $\Lambda \subseteq G$ such that $D^+(\Lambda) \leq (1+\varepsilon) d_{\pi}$ and such that $\{\pi(\lambda) g \}_{\lambda \in \Lambda}$ is a frame for $\Hpi$.
\end{corollary}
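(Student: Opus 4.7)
The plan is to reduce this corollary to the preceding theorem by a simple normalization argument, exploiting the fact that the hypotheses of the theorem only demand admissibility (not a specific norm), and that the density conclusion scales with $\|g\|^2$.

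First, I would observe that the statement is scale invariant in $g$: for any nonzero scalar $c \in \mathbb{C}$, the system $\{\pi(\lambda) g\}_{\lambda \in \Lambda}$ is a frame for $\Hpi$ if and only if $\{\pi(\lambda)(cg)\}_{\lambda \in \Lambda}$ is. Moreover, the class $\mathcal{B}_\pi^2$ is closed under nonzero scalar multiplication, since $V_{cg} f = \overline{c}\, V_g f$, and hence $M V_{cg} f = |c| \cdot M V_g f \in L^2(G)$ whenever $MV_g f \in L^2(G)$.

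Next, I would use the characterization of admissible vectors for irreducible $\pi$ recalled in the paper: a vector $\eta \in \Hpi$ with $V_\eta \eta \in L^2(G)$ is admissible precisely when $\|\eta\|^2 = d_\pi$. Since $g \in \mathcal{B}_\pi^2 \subseteq \{f : V_f f \in L^2(G)\}$ and $g$ is nonzero (as $\mathcal{B}_\pi^2 \ni 0$ trivially, but only nonzero vectors produce frames — implicit here because otherwise $d_\pi$ has no content), I set
\[
 g' := \frac{\sqrt{d_\pi}}{\|g\|}\, g .
\]
Then $\|g'\|^2 = d_\pi$, so $g'$ is admissible, and by the preceding observation $g' \in \mathcal{B}_\pi^2$ as well.

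Now I apply the previous theorem to $g'$: for every $\varepsilon > 0$, there exists $\Lambda \subseteq G$ with
\[
 D^-(\Lambda) \leq (1+\varepsilon)\|g'\|^2 = (1+\varepsilon)\, d_\pi
\]
such that $\{\pi(\lambda)g'\}_{\lambda \in \Lambda}$ is a frame for $\Hpi$. By the scale invariance noted in the first step, $\{\pi(\lambda) g\}_{\lambda \in \Lambda}$ is also a frame for $\Hpi$ with the same index set, yielding the claim. There is no real obstacle here — all the work was done in the theorem; this corollary is just the specialization to irreducible $\pi$ together with a rescaling to get rid of the $\|g\|^2$ factor by invoking the orthogonality relations.
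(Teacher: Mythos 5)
Your proof is correct and matches what the paper intends: the paper omits the proof of this corollary, and the expected argument is exactly the rescaling you use, replacing $g$ by $g' = (\sqrt{d_\pi}/\|g\|)\, g$ so that $g'$ is admissible and $\|g'\|^2 = d_\pi$, then invoking the preceding theorem and the scale-invariance of the frame property. The observation that $\mathcal{B}_\pi^2$ is closed under scalar multiplication is the small but necessary checkpoint, and you have it right.
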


\subsection{Spectral subspaces of elliptic differential operators} \label{sec:elliptic}
In this subsection, we apply our main theorem to spectral subspaces of elliptic differential operators as recently studied in \cite{grochenig2024necessary}; see also \cite{grochenig2017what} for the particular case of spaces of variable bandwidth on the real line.

For $s \geq 0$, the Sobolev space $W^s_2 (\mathbb{R}^d)$ is as usual defined to be subspace of $L^2 (\mathbb{R}^d)$ of elements $f \in L^2 (\mathbb{R}^d)$ satisfying
\[
 \int_{\mathbb{R}^d} |\widehat{f}(\xi)|^2 (1+|\xi|^2)^s \; d\xi < \infty.
\]
Let $a \in C_b^{\infty} (\mathbb{R}^d, \mathbb{C}^{d\times d})$ be positive definite, that is, $a_{j,k} = \overline{a_{k,j}} \in C^{\infty}_b (\mathbb{R}^d)$ and there exists $\theta > 0$ such that
\[
 a(x) \xi \cdot \xi \geq \theta |\xi|^2 \quad x, \xi \in \mathbb{R}^d.
\]
Then the differential operator $H_a$ of the form
\[
 H_a f = - \sum_{j,k=1}^d \partial_j a_{j,k} \partial_k f, \quad f \in W^2_2 (\mathbb{R}^d)
\]
is a postive, uniformly elliptic self-adjoint operator on $L^2 (\mathbb{R}^d)$ with domain $\mathcal{D}(H_a) = W^2_2 (\mathbb{R}^d)$. For $\Omega > 0$, denote by $\mathds{1}_{[0, \Omega]} (H_a)$ the spectral projection of $H_a$ corresponding to the spectrum $[0, \Omega]$. Following \cite{grochenig2024necessary}, the \emph{Paley-Wiener space} associated to $H_a$ is the spectral subspace
\[
 \PW_{\Omega} (H_a) := \mathds{1}_{[0, \Omega]} (H_a) L^2 (\mathbb{R}^d).
\]
For the particular case that \[
     H_a f = - \frac{d}{dx} \bigg(a \frac{d}{dx} f \bigg),
                             \]
the Paley-Wiener spaces associated to $H_a$ corresponds to spaces of variable bandwidth studied in \cite{grochenig2017what}, cf. \cite[Example 6D]{grochenig2024necessary}.

By \cite[Lemma 2.1, Proposition 2.2]{grochenig2024necessary}, the Paley-Wiener space $\PW_{\Omega} (H_a)$ is a reproducing kernel Hilbert space that is continuously embedded into all Sobolev spaces $W^s_2(\mathbb{R}^d)$, $s \geq 0$, and its reproducing kernel $k$ satisfies the diagonal condition \eqref{eq:dc}. Moreover, if the symbol $a \in C^{\infty}_b (\mathbb{R}^d, \mathbb{C}^{d \times d})$ is \emph{slowly oscillating}, in the sense that $\lim_{|x|\to \infty} |\partial_k a(x) |=0$ for all $k = 1, ..., d$, then the reproducing kernels $\{k_x\}_{x \in \mathbb{R}^d}$ satisfy the weak localization property \eqref{eq:wl} and the homogeneous approximation property \eqref{eq:hap}, cf. \cite[Section 2D2]{grochenig2024necessary} and \cite[Theorem 4.4, Theorem 5.2]{grochenig2024necessary}. Lastly, $\mathbb{R}^d$ equipped with Lebesgue measure and Euclidean metric clearly satisfy our standing assumptions on the metric measure space.

By the previous paragraph, the results in the present paper are applicable to the reproducing kernel Hilbert spaces $\PW_{\Omega} (H_a)$, which directly yields the following result. This shows the existence of optimal sampling sets in such spaces and resolves an open question mentioned on \cite[p. 590]{grochenig2024necessary}. The result appears to be new even for spaces of variable bandwidth as considered in \cite{grochenig2017what}.

\begin{theorem}
 Let $a \in C_b^{\infty} (\mathbb{R}^d, \mathbb{C}^{d\times d})$ be positive definite and slowly oscillating and $\Omega > 0$. For every $\varepsilon > 0$, there exists $\Lambda \subseteq \mathbb{R}^d$ satisfying
 \[
  D^+_0(\Lambda) \leq 1+\varepsilon
 \]
 and such that the reproducing kernels $\{k_{\lambda} \}_{\lambda \in \Lambda}$ form a frame for $\PW_{\Omega} (H_a)$.
\end{theorem}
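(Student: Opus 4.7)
The plan is to verify that this setting fits into the general framework of \Cref{thm:main} and then apply it directly. First I would check that $(\mathbb{R}^d, |\cdot|, dx)$, equipped with Lebesgue measure and Euclidean distance, satisfies the standing assumptions of \Cref{sec:repro_measure}: nondegeneracy of balls \eqref{eq:NDB} is trivial since $\mu(B_r(x)) = c_d r^d$ is independent of $x$, and the weak annular decay property \eqref{eq:wad} follows from the same scaling formula since $\mu(B_{r+1}(x) \setminus B_r(x))/\mu(B_r(x)) = ((r+1)^d - r^d)/r^d \to 0$ as $r \to \infty$. Moreover $\mu(\mathbb{R}^d) = \infty$, which is needed for the iterative construction in \Cref{prop:main}.

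Next I would invoke the structural results from \cite{grochenig2024necessary} to verify the conditions on the reproducing kernels of $\PW_\Omega(H_a)$. By \cite[Lemma 2.1, Proposition 2.2]{grochenig2024necessary}, $\PW_\Omega(H_a)$ is a reproducing kernel Hilbert space whose diagonal $k(x,x)$ is bounded above and below by positive constants, so the diagonal condition \eqref{eq:dc} holds. Under the slowly oscillating hypothesis on $a$, the off-diagonal localization results \cite[Theorem 4.4, Theorem 5.2]{grochenig2024necessary} yield both the weak localization property \eqref{eq:wl} and the homogeneous approximation property \eqref{eq:hap} for the system $\{k_x\}_{x \in \mathbb{R}^d}$.

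With these ingredients verified, the reproducing kernels $\{k_x\}_{x \in \mathbb{R}^d}$ form a Parseval frame for $\PW_\Omega(H_a)$ (as is the case for any RKHS), they have norms uniformly bounded above and below by \eqref{eq:dc}, and by \Cref{thm:framemeasure_rkhs} they satisfy the frame measure/density property \eqref{eq:frd}. Therefore \Cref{thm:main} applies directly with $g_x := k_x$, yielding a countable set $\Lambda \subseteq \mathbb{R}^d$ such that $\{k_\lambda\}_{\lambda \in \Lambda}$ is a frame for $\PW_\Omega(H_a)$ with $D_0^-(\Lambda) \leq 1+\varepsilon$.

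I do not anticipate any serious obstacle here, since all the hard analytic work has been done either in the general framework of \Cref{sec:frame_critical} or in the cited references on elliptic operators. The proof is essentially a verification that the hypotheses of \Cref{thm:main} are met in this concrete example; the only subtlety is ensuring that the localization properties proven in \cite{grochenig2024necessary} are phrased in a form matching \eqref{eq:wl} and \eqref{eq:hap}, which they are once one identifies the metric measure space with Euclidean $\mathbb{R}^d$.
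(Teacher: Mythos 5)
Your proof is correct and follows essentially the same route as the paper: verify the metric measure space hypotheses, cite \cite[Lemma 2.1, Proposition 2.2, Theorem 4.4, Theorem 5.2]{grochenig2024necessary} for the diagonal condition, weak localization, and homogeneous approximation property, invoke \Cref{thm:framemeasure_rkhs} to get the frame measure/density property, and then apply \Cref{thm:main} to the Parseval frame $\{k_x\}_{x\in\mathbb{R}^d}$. The only difference is that you spell out the (trivial) verification of the metric measure space assumptions, which the paper dispatches in a single sentence.
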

\begin{proof}
Since the reproducing kernels $\{k_x\}_{x \in \mathbb{R}^d}$ satisfy the weak localization property \eqref{eq:wl} and the homogeneous approximation property \eqref{eq:hap}, they also satisfy the frame measure/density property \eqref{eq:frd} by \Cref{thm:framemeasure_rkhs}. In addition, the diagonal condition \eqref{eq:dc} yields that $C_1 \leq \|k_x\|^2 \leq C_2$ for all $x \in X$. Hence, applying \Cref{thm:main} to the Parseval frame $\{k_x \}_{x \in x}$ yields the result.
\end{proof}

\subsection{Weighted spaces of entire functions} \label{sec:holomorphic}
In this subsection, we rederive the existence of optimal sampling sets in certain weighted spaces of entire functions, cf. \cite{grochenig2019strict}.

We consider $X = \mathbb{C}^d$ equipped with Lebesgue measure $\mu$ and Euclidean distance. For a twice continuously differentiable $\phi : \mathbb{C}^d \to \mathbb{R}$, we assume that there exist two constants $m , M > 0$ such that
\begin{align} \label{eq:pluri}
 m \mathbf{I}_d \leq \big( \partial_j \overline{\partial_k} \phi(z) \big)_{j,k \in \{1, ..., d\}} \leq M \mathbf{I}_d , \quad z \in \mathbb{C}^d \setminus \{0\},
\end{align}
in the sense of positive definite matrices. The associated \emph{weighted Fock space} $\mathcal{F}^2_{\phi} (\mathbb{C}^d)$ is the space of entire functions $f$ on $\mathbb{C}^d$ satisfying
\[
\| f \|_{\mathcal{F}^2_{\phi}}^2 = \int_{\mathbb{C}^d} |f(z)|^2 e^{-2 \phi (z)} \; dz < \infty,
\]
where $dz$ denotes the Lebesgue measure on $\mathbb{C}^d$.
Then $\mathcal{F}^2_{\phi} (\mathbb{C}^d)$ is a reproducing kernel Hilbert space whose reproducing kernel  $k_{\phi}$ satisfies
\begin{align} \label{eq:weighted_diagonal}
 0 < C_1 \leq k_{\phi} (z,z) e^{-2 \phi (z)} \leq C_2 < \infty \quad z \in \mathbb{C}^d,
\end{align}
and
\begin{align} \label{eq:weighted_offdiagonal}
 |k_{\phi} (z,w) |e^{- \phi(z) - \phi(w)} \leq c_1 e^{-c_2 |z-w|}, \quad z, w \in \mathbb{C}^d,
\end{align}
where $c_1, c_2, C_1, C_2 > 0$ are constants that depend only on the constants in \eqref{eq:pluri}.
See \cite{lindholm2001sampling, schuster2012toeplitz, delin1998pointwise} and \cite[Section 2.2]{grochenig2019strict}.

The following theorem recovers the sampling part in \cite[Theorem 1.1]{grochenig2019strict}.

\begin{theorem}
Let $\phi \in C^2 (\mathbb{C}^d, \mathbb{R})$ be a function satisfying \eqref{eq:pluri}.

For every $\varepsilon > 0$, there exists $\Lambda \subseteq \mathbb{C}^d$ satisfying
\[
 D^+_{\phi} (\Lambda) := \liminf_{r \to \infty} \inf_{z \in \mathbb{C}^d} \frac{\# (\Lambda \cap B_r (z))}{\int_{B_r (z)} k_{\phi} (w,w) e^{-2 \phi (w)} \; d\mu (w)} \leq 1+\varepsilon
\]
and such that there exist constants $0 < A \leq B < \infty$ satisfying
\[
 A \| f \|^2_{\mathcal{F}^2_{\phi}} \leq \sum_{\lambda \in \Lambda} |f(\lambda) |^2 e^{-2 \phi(\lambda)} \leq B \| f \|^2_{\mathcal{F}^2_{\phi}}
\]
for all $f \in \mathcal{F}^2_{\phi} (\mathbb{C}^d)$.
\end{theorem}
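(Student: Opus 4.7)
The plan is to transfer the weighted Fock space to an unweighted $L^2$ setting so that \Cref{thm:main} can be applied verbatim. Define the isometry
\[
U : \mathcal{F}^2_{\phi}(\mathbb{C}^d) \to L^2(\mathbb{C}^d, dm), \quad Uf(z) = e^{-\phi(z)} f(z),
\]
and let $\mathcal{H} := U(\mathcal{F}^2_{\phi})$. A short direct calculation (using the reproducing property in $\mathcal{F}^2_{\phi}$) shows that $\mathcal{H} \subseteq L^2(\mathbb{C}^d, dm)$ is a reproducing kernel Hilbert space with reproducing kernel
\[
K_z(w) = e^{-\phi(z) - \phi(w)} k_{\phi}(z, w), \qquad z,w \in \mathbb{C}^d.
\]
Take the metric measure space $(X, d, \mu) = (\mathbb{C}^d, |\cdot|, dm)$; the standing assumptions from \Cref{sec:repro_measure} (nondegeneracy of balls and weak annular decay) are trivially satisfied by Euclidean space with Lebesgue measure.

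The next step is to verify the hypotheses of \Cref{thm:main} for the Parseval frame $\{K_x\}_{x\in\mathbb{C}^d}$ of $\mathcal{H}$. From \eqref{eq:weighted_diagonal} one has
\[
\|K_z\|^2_{L^2(dm)} = K_z(z) = e^{-2\phi(z)} k_{\phi}(z,z) \in [C_1, C_2],
\]
which is the diagonal condition \eqref{eq:dc}. The off-diagonal bound \eqref{eq:weighted_offdiagonal} gives $|K_z(w)|^2 \leq c_1^2 e^{-2c_2 |z-w|}$, so integration immediately yields the weak localization \eqref{eq:wl}. For the homogeneous approximation property \eqref{eq:hap}, given a set $\Lambda$ such that $\{K_\lambda\}_{\lambda\in\Lambda}$ is Bessel, \Cref{lem:relatively_dense} provides relative separation of $\Lambda$; the same exponential estimate combined with a standard sum-to-integral comparison (bounding $\#(\Lambda \cap B_1(y))$ uniformly in $y$ and dominating by $\int_{|v|>r-1} e^{-2c_2|v|}\, dm(v)$) makes $\sup_{x}\sum_{\lambda \in \Lambda \setminus B_r(x)} |K_x(\lambda)|^2$ arbitrarily small as $r\to\infty$. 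Hence \Cref{thm:framemeasure_rkhs} applies and gives the frame measure/density property \eqref{eq:frd} for $\{K_x\}_{x\in\mathbb{C}^d}$.

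Applying \Cref{thm:main} now produces a countable $\Lambda \subseteq \mathbb{C}^d$ such that $\{K_\lambda\}_{\lambda\in\Lambda}$ is a frame for $\mathcal{H}$ and
\[
D^-_0(\Lambda) = \liminf_{r\to\infty} \inf_{z\in\mathbb{C}^d} \frac{\#(\Lambda \cap B_r(z))}{\int_{B_r(z)} K_w(w)\, dm(w)} \leq 1 + \varepsilon.
\]
Since $K_w(w) = k_\phi(w,w) e^{-2\phi(w)}$, the abstract density $D^-_0(\Lambda)$ is precisely the weighted density $D^-_\phi(\Lambda)$ appearing in the statement. To finish, pull the frame back through $U$: a short calculation gives $U^{-1} K_\lambda = e^{-\phi(\lambda)} k_\phi(\lambda, \cdot)$ in $\mathcal{F}^2_\phi$, hence
\[
\big|\langle f, U^{-1} K_\lambda\rangle_{\mathcal{F}^2_\phi}\big|^2 = e^{-2\phi(\lambda)} |f(\lambda)|^2, \qquad f \in \mathcal{F}^2_\phi(\mathbb{C}^d).
\]
The frame inequality for $\{K_\lambda\}_{\lambda\in\Lambda}$ in $\mathcal{H}$ therefore transcribes directly to the claimed sampling inequality in $\mathcal{F}^2_\phi(\mathbb{C}^d)$.

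There is no serious conceptual obstacle here; the sole delicate point is the verification of \eqref{eq:hap}, but the exponential decay in \eqref{eq:weighted_offdiagonal} makes this a routine sum-to-integral estimate. The only piece of bookkeeping worth emphasizing is the choice of $U$, which is what aligns the abstract density $D^-_0$ from the framework with the weighted density $D^-_\phi$ of the statement.
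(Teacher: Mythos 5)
Your proposal is correct and takes essentially the same approach as the paper: pass to $\mathcal{H} = \{e^{-\phi} f : f \in \mathcal{F}^2_\phi\} \subseteq L^2(\mathbb{C}^d)$ with kernel $\widetilde k_\phi(z,w) = k_\phi(z,w)e^{-\phi(z)-\phi(w)}$, verify \eqref{eq:dc}, \eqref{eq:wl}, \eqref{eq:hap} from \eqref{eq:weighted_diagonal} and \eqref{eq:weighted_offdiagonal}, then invoke \Cref{thm:framemeasure_rkhs} and \Cref{thm:main}. You merely make two steps more explicit than the paper does (the sum-to-integral argument for \eqref{eq:hap}, which the paper delegates to \cite{grochenig2019strict}, and the pull-back of the frame inequality through $U$), so no substantive difference.
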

\begin{proof}
We define the space $\mathcal{H} := \big\{ e^{- \phi} f : f \in \mathcal{F}^2_{\phi} (\mathbb{C}^d) \big\} \subseteq L^2 (\mathbb{C}^d, \mu)$, which is a reproducing kernel Hilbert space with kernel $\widetilde{k}_{\phi} (z,w) := k_{\phi} (z,w) e^{-\phi(z) - \phi(w)}$ for $z, w \in \mathbb{C}^d$. Hence, the estimates \eqref{eq:weighted_diagonal} imply that $\tilde{k}_{\phi}$ satisfies the diagonal condition \eqref{eq:dc}, whereas the weak localization property \eqref{eq:wl} and the homogeneous approximation property \eqref{eq:hap} follow from the off-diagonal decay estimate \eqref{eq:weighted_offdiagonal}; see also (the proof of) \cite[Theorem 2.1]{grochenig2019strict}. Thus, Theorem \ref{thm:framemeasure_rkhs} and Theorem \ref{thm:main} imply the existence of a set $\Lambda \subseteq \mathbb{C}^d$ satisfying $D_{\phi}^+ (\Lambda) = D^+_0(\Lambda) \leq 1+ \varepsilon$ and such that the reproducing kernels $\{\widetilde{k}_{\phi} (\cdot, \lambda) \}_{\lambda \in \Lambda}$ forms a frame for $\mathcal{H} \subseteq L^2 (\mathbb{C}^d)$.
\end{proof}

\appendix

\section{Proof of regularization lemma}
In this section we give the proof of Lemma \ref{regularization}. We start by giving the following convenient definition.

\begin{definition}
 Let $(X, d, \mu)$ be a metric measure space. A measure $\nu$ on $X$ is said to be \emph{density equivalent} with $\mu$ if it satisfies the following conditions:
 \begin{enumerate}[(d1)]
  \item The measure $\nu$ is absolutely continuous with respect to $\mu$ and there exists $C > 0$ such that
  \[
   \frac{1}{C} \leq \frac{d\nu (x)}{d\mu(x)} \leq C \quad \text{for $\mu$-a.e.} \quad x \in X;
  \]
\item
\[
\liminf_{r \to \infty} \inf_{x \in X} \frac{\nu (B_r (x))}{\mu(B_r (x))} = \limsup_{r \to \infty} \sup_{x \in X} \frac{\nu (B_r (x))}{\mu(B_r (x))} = 1.
\]
 \end{enumerate}
\end{definition}

Note that if $\nu$ is density equivalent with $\mu$, then also $(X, d, \nu)$ satisfies the standing assumptions in \Cref{sec:repro_measure} by (d1). Moreover, for any $\Lambda \subseteq X$ we have $D^+_{\mu}(\Lambda)=D^+_\nu(\Lambda)$ and $D^-_{\mu}(\Lambda)=D^-_\nu(\Lambda)$ by (d2).

\begin{lemma}\label{step}
  Let $\Lambda \subseteq X$ be such that $0<D^-_{\mu}(\Lambda) \le D^+_{\mu}(\Lambda)<\infty$ and let $\ve>0$. Let $\{X_y\}_{y\in Y}$ be a partition of $X$ satisfying \eqref{xy} and \eqref{eq:stephyp2} for some constants $R_0>0$ and $C_0>1$.
Suppose that $\{X^{(1)}_y\}_{y\in Y_1}$ is a coarser partition of $X$ than $\{X_y \}_{y \in Y}$, i.e., for all $y \in Y$ there exists $y_1 \in Y$ such that $X_y \subseteq X_{y_1}^{(1)}$, and such that
\begin{equation}\label{xy1}
 B_{R_1}(y)  \subseteq X^{(1)}_{y} \subseteq B_{3R_1}(y) \qquad \text{for all }y\in Y_1.
 \end{equation}
for some $R_1 > 0$.

Then, for any sufficiently large $R_2>0$ and any family $\{B_{R_2}(y)\}_{y\in Y_2}$ of disjoint balls in $X$, there exists a measure $\nu$ on $X$, which is density equivalent with $\mu$, such that the set
 \[
 \widetilde{Y}^{(1)} = \big\{y \in Y: \exists y_1\in Y_1, \; \exists y_2 \in Y_2, \quad X_y \subseteq X_{y_1}^{(1)} \subseteq B_{R_2}(y_2) \big\}
 \]
 satisfies:
 \begin{enumerate}[(i)]
\item
 for all $y\in \widetilde Y^{(1)}$, we have
 \[
 D^-_{\mu}(\Lambda) - \ve \le \frac{\#(\Lambda \cap X_y)}{\nu(X_y)} \le D^{+}_{\mu}(\Lambda) + \ve.
 \]
 \item  $\nu(X_y)=\mu(X_y)$ for all $y \in Y \setminus \widetilde{Y}^{(1)}$,
 \item $\nu(\widetilde{X}_{y_2})=\mu(\widetilde{X}_{y_2})$ for all $y_2 \in Y_2$, where $\widetilde{X}_{y_2}= \bigcup_{y \in Y_1 :\  X^{(1)}_y \subseteq B_{R_2} (y_2)} X^{(1)}_y$.
 \end{enumerate}
\end{lemma}

\begin{proof}
We split the proof into three steps.
\\~\\
\textbf{Step 1.} In this step, we will define the measure $\nu$.
For this, let $\eta > 0$ be sufficiently small to be determined later.
Then,
by the weak annular decay property (\Cref{lem:wad}), there exists $R_2>0$ satisfying
\begin{align} \label{eq:wadeta1}
 \frac{\mu(B_{R_2} (x) \setminus B_{R_2 - 6R_1} (x))}{\mu(B_{R_2} (x))} < \eta \qquad \text{for all } x \in X.
\end{align}
By increasing $R_2 > 0$ if necessary, we can further assume that for all $R\ge R_2 -6R_1$ we have
\begin{equation}\label{step3}
 D^-_{\mu} (\Lambda) - \frac{\varepsilon}{2} \leq \frac{\#(\Lambda \cap B_{R} (x))}{\mu(B_{R} (x))} \leq D^+_{\mu} (\Lambda) + \frac{\varepsilon}{2} \qquad \text{for all }  x \in X.
\end{equation}
Fix some family $\{B_{R_2}(y_2)\}_{y_2\in Y_2}$ of disjoint balls in $X$. For $y_2 \in Y_2$, define
\[
 \widetilde{X}_{y_2} := \bigcup_{y \in Y_1 :\  X^{(1)}_y \subseteq B_{R_2} (y_2)} X^{(1)}_y.
\]
By \eqref{xy1} the diameter of a set $X^{(1)}_y$ is at most $6R_1$ for any $y\in Y_1$. Hence, 
\begin{equation}\label{step4}
B_{R_2 - 6R_1} (y_2) \subseteq \widetilde{X}_{y_2} \subseteq B_{R_2} (y_2) \qquad\text{for all } y_2 \in Y_2.
\end{equation}
Note that \eqref{eq:wadeta1} can be rewritten as
\begin{equation}\label{step5}
 \frac{\mu (B_{R_2-6R_1} (x))}{\mu(B_{R_2} (x))} > 1 - \eta \qquad \text{for all }  x \in X.
\end{equation}
Hence, using \eqref{step3}, \eqref{step4}, and \eqref{step5} yields for all $y_2\in Y_2$,
\begin{align*}
(1-\eta) (D^-_{\mu} (\Lambda) - \varepsilon/2) &\leq  (1-\eta) \frac{\#(B_{R_2-6R_1}(y_2) \cap \Lambda)}{\mu(B_{R_2-6R_1 } (y_2))}
\leq  \frac{\#(B_{R_2-6R_1}(y_2) \cap \Lambda)}{\mu(B_{R_2 } (y_2))}
\\
&\leq \frac{\#(\widetilde{X}_{y_2} \cap \Lambda)}{\mu(\widetilde{X}_{y_2})}   \leq \frac{\#(B_{R_2}(y_2) \cap \Lambda)}{\mu(B_{R_2 - 6R_1} (y_2))}
\leq \frac{\#(B_{R_2}(y_2) \cap \Lambda)}{(1-\eta)\mu(B_{R_2} (y_2))} \\
&\leq  \frac{D^+_{\mu} (\Lambda) + \varepsilon/2}{1-\eta}.
\end{align*}
Therefore,  for a choice of sufficiently small $\eta>0$, we have
\begin{align} \label{eq:lemstepb}
 D^-_{\mu} (\Lambda) - \varepsilon \leq \frac{\#(\widetilde{X}_{y_2} \cap \Lambda)}{\mu(\widetilde{X}_{y_2})}  \leq D^+_{\mu} (\Lambda) + \varepsilon \quad \text{for all} \quad y_2 \in Y_2.
\end{align}
For $y \in Y$, define
\begin{align*}
 a_y =
 \begin{cases}
  \frac{\#(\Lambda \cap X_y)}{\mu(X_y)} \frac{\mu (\widetilde{X}_{y_2})}{\# (\widetilde{X}_{y_2} \cap \Lambda)},  \quad & X_y \subseteq \widetilde X_{y_2} \;\; \text{for some} \;\; y_2 \in Y_2, \\
  1, & \text{otherwise};
 \end{cases}
\end{align*}
and set
$
 d \nu := \big( \sum_{y \in Y} a_y \mathds{1}_{X_y} \big) \cdot d\mu.
$
\\~\\
\textbf{Step 2.} In this step, we show that $\nu$ satisfies (i)--(iii) of the statement of the lemma. For this, note that if $y \in \widetilde{Y}^{(1)}$, so that $X_y \subseteq \widetilde X_{y_2}$ for some $y_2 \in Y_2$, then a direct calculation shows that
\[
 \frac{\#(X_y \cap \Lambda)}{\nu (X_y)} = \frac{\#(\widetilde{X}_{y_2} \cap \Lambda)}{\mu (\widetilde{X}_{y_2})}.
\]
Thus (i) follows from \eqref{eq:lemstepb}. On the other hand, if $y \in Y \setminus \widetilde{Y}^{(1)}$, then $a_y=1$ and thus
$\nu(X_y)=\mu(X_y)$, which shows (ii). In particular, for every measurable set $E \subseteq X \setminus \bigcup_{y_2 \in Y_2} \widetilde{X}_{y_2}$, we have $\nu(E) = \mu(E)$. Finally, for all $y_2 \in Y_2$,
\[
 \nu (\widetilde{X}_{y_2}) = \sum_{y \in Y_1 : \ X_y \subseteq \widetilde X_{y_2}} \nu (X_y) = \sum_{y \in Y_1 : \ X_y \subseteq \widetilde X_{y_2} } \frac{\#(X_{y} \cap \Lambda) \mu(\widetilde{X}_{y_2})}{\#(\widetilde{X}_{y_2} \cap \Lambda)} =   \mu (\widetilde{X}_{y_2}),
\]
which shows (iii).
\\~\\
\textbf{Step 3.} In this step, we show that $\nu$ is density equivalent to $\mu$. Without loss of generality we can assume that $\ve< D^-_{\mu} (\Lambda)/2$.
The definition of $\nu$ immediately yields that $\nu$ is absolutely continuous with respect to $\mu$.  Using \eqref{eq:stephyp2} and \eqref{eq:lemstepb}, it follows that, for $y \in Y$,
\[
 a_{y} \leq C_0 D^+_{\mu} (\Lambda) \frac{1}{D^-_{\mu} (\Lambda) - \varepsilon} \leq 2 C_0 \frac{D^+_{\mu} (\Lambda)}{D^-_{\mu} (\Lambda)} =: C'.
\]
Similarly, it follows that $a_{y} \geq 1/C'$ for all $y \in Y$. Therefore,
\begin{equation}\label{radon}
 \frac{1}{C'} \leq \frac{d\nu}{d\mu} (x) \leq C' \quad \text{for a.e.} \quad x \in X.
\end{equation}

Finally, we show that
\begin{align}\label{eq:preservation}
 \liminf_{r \to \infty} \inf_{x \in X} \frac{\nu (B_r(x))}{\mu(B_r(x))} = \limsup_{r \to \infty} \sup_{x \in X} \frac{\nu (B_r(x))}{\mu(B_r(x))} =1.
\end{align}
For $x \in X$ and $r>0$ sufficiently large, we have that
\begin{align*}
 \nu (B_r (x)) &= \nu \bigg(B_r(x) \setminus \bigcup_{y_2 \in Y_2 :\ \widetilde{X}_{y_2} \subseteq B_r(x)} \widetilde{X}_{y_2} \bigg) + \nu \bigg( \bigcup_{y_2 \in Y_2 :\ \widetilde{X}_{y_2} \subseteq B_r(x)} \widetilde{X}_{y_2} \bigg) \\
 &= \nu \bigg( \bigg(B_r(x) \setminus \bigcup_{y_2 \in Y_2 :\ \widetilde{X}_{y_2} \subseteq B_r(x)} \widetilde{X}_{y_2} \bigg) \cap \bigcup_{y_2 \in Y_2} \widetilde{X}_{y_2} \bigg) \\
 &\quad \quad  +  \nu \bigg( \bigg(B_r(x) \setminus \bigcup_{y_2 \in Y_2 :\ \widetilde{X}_{y_2} \subseteq B_r(x)} \widetilde{X}_{y_2} \bigg) \cap \bigg( X \setminus \bigcup_{y_2 \in Y_2} \widetilde{X}_{y_2} \bigg) \bigg)   \\
 &\quad \quad + \nu \bigg( \bigcup_{y_2 \in Y_2 :\ \widetilde{X}_{y_2} \subseteq B_r(x)} \widetilde{X}_{y_2} \bigg).
 \end{align*}
 By \eqref{step4}, we have
 \[
 \bigg( B_r(x) \setminus \bigcup_{y_2 \in Y_2 :\  \widetilde{X}_{y_2} \subseteq B_r(x)} \widetilde{X}_{y_2} \bigg) \cap \bigcup_{y_2 \in Y_2} \widetilde{X}_{y_2} \subseteq B_r (x) \setminus B_{r-6R_1} (x).
\]
Hence, using the fact that $\nu(E) = \mu(E)$ for every measurable set $E \subseteq X \setminus \bigcup_{y_2 \in Y_2} \widetilde{X}_{y_2}$ and conclusion (iii), we have
 \begin{align*} 
 \nu(B_r(x)) &\leq  \mu(B_r (x)) + \nu (B_r(x) \setminus B_{r-6R_1}(x)).
\end{align*}
By \eqref{radon} the measure $\nu$ satisfies the weak annular property. Hence,  
\[
\liminf_{r \to \infty} \inf_{x \in X} \frac{\mu (B_r(x))}{\nu(B_r(x))} \ge 1.
\] 
In a similar manner, one can show that
\[
 \mu (B_r (x)) \leq \nu(B_r (x)) + \mu (B_r(x) \setminus B_{r-6R_1}(x)) .
\]
Hence, 
\[
\limsup_{r \to \infty} \sup_{x \in X} \frac{\mu (B_r(x))}{\nu(B_r(x))} = \bigg( \liminf_{r \to \infty} \inf_{x \in X} \frac{\nu (B_r(x))}{\mu(B_r(x))}\bigg)^{-1} \le 1.
\]
Thus, the claim \eqref{eq:preservation} follows.
\end{proof}

\begin{proof}[Proof of Lemma \ref{regularization}]
We split the proof into several steps.
\\~\\
\textbf{Step 1.} This step will construct certain partitions $\{X_y^{(n)} \}_{y \in Y_n}$, $n \in \mathbb{N}_0$, of $X$.

We start by constructing recursively centers $0 < R_0 < R_1 < ... < R_n$ and associated sets $Y_n \subseteq X$ for $n \in \mathbb{N}_0$. 
For the base case we let $X_y^{(0)}=X_y$ for $y\in Y_0:=Y$. We will choose $R_1>0$ such that doubling property \eqref{eq:doubling} holds for $r\ge R_1$.
For the recursive step we assume we are given radii $0 < R_0 < ... < R_{n-1}$ for $n \ge 1$. Then, by the weak annular decay property there exists $R_{n} > R_{n-1}$ such that for all $R>R_n$ we have
\begin{equation}\label{ann}
 \frac{\mu(B_{R} (x) \setminus B_{R- 9 R_{n-1}} (x) )}{\mu(B_{R} (x))} <\eta_{n} \qquad \text{for all } x \in X,
\end{equation}
where $0<\eta_{n}<1/2$ is sufficiently small constant to be chosen later in order to apply Lemma \ref{step} in Step 3.
For such $R_{n}$, we choose $Y_{n} \subseteq X$ such that $\{B_{R_{n}+6R_{n-1}} (y) \}_{y \in Y_{n}}$ is a maximal family of disjoint balls.

Next, we construct the associated partition $\{X_y^{(n)} \}_{y \in Y_n}$ of $X$ for $n \in \mathbb{N}_0$, which satisfy
\begin{equation}\label{xy2}
 B_{R_n}(y)  \subseteq X^{(n)}_{y} \subseteq B_{3R_n}(y) \qquad \text{for all }y\in Y_n.
 \end{equation}
  Note that $\{X_y^{(0)}\}_{y \in Y_0}$ satisfies \eqref{xy2} by assumption. For the recursive step, we assume that we have partitions $\{X_y^{(k)} \}_{y \in Y_k}$ for $0 \leq k \leq n-1$ for some $n \geq 1$. For $y \in Y_{n}$, we let
\begin{equation}\label{recur}
 \widetilde{X}^{(n)}_y := \bigcup_{z \in Y_n \; : \; X_z^{(n-1)} \subseteq B_{R_{n}+6R_{n-1}} (y)} X_z^{(n-1)}.
\end{equation}
On one hand, since $\diam (X_z^{(n-1)}) \leq 6 R_{n-1}$, 
we have that $B_{R_{n} } (y) \subseteq \widetilde{X}_y^{(n)}$. On the other hand, by construction,
$
 \widetilde{X}_y^{(n)} \subseteq B_{R_{n}+6R_{n-1}} (y)
$ for $y \in Y_{n}$.
Therefore, since $X = \bigcup_{y \in Y_{n}} B_{2 R_{n}+12R_{n-1}} (y)$, we can enlarge the sets $\widetilde{X}^{(n)}_y$ into sets $X^{(n)}_y$ such that
$\{X^{(n)}_y \}_{y \in Y_{n}}$ is a partition of $X$ satisfying
\begin{align} \label{eq:partition_nested}
& B_{R_{n}} (y) \subseteq X_y^{(n)} \subseteq B_{2R_{n} + 12R_{n-1}} (y)
 \qquad\text{for all } y \in Y_{n},
 \\
 \label{nestn}
&  \forall y \in Y_{n-1} \quad  \exists y' \in Y_{n} \quad X^{(n-1)}_y \subseteq X^{(n)}_{y'}.
\end{align}
By choosing $R_n>12R_{n-1}$, it follows that \eqref{xy2} holds.
In addition, the doubling condition of $\mu$ on large scales and \eqref{ann} yields
\begin{align} \label{quotient_doubling}
 \frac{\mu(\widetilde{X}_y^{(n)} )}{\mu(X_y^{(n)} )} \geq  \frac{ \mu(B_{R_{n}}(y))}{\mu(B_{2 R_{n}+12R_{n-1}}(y))} \geq
 \frac{1}{C_d} \frac{ \mu(B_{R_{n} }(y))}{ \mu(B_{ R_{n}+6R_{n-1}}(y))} \ge \frac{1-\eta_{n}}{C_d} \ge  \frac{1}{2 C_d}.
\end{align}
for all $y \in Y_n$ and $n \in \mathbb{N}$.
\\~\\
\textbf{Step 2.} For $n \in \mathbb{N}$, consider the sets
\[
 E_n := \bigcup_{k = 1}^n \bigcup_{y \in Y_k} \widetilde{X}_y^{(k)},
\]
where the sets $\widetilde{X}_y^{(k)}$ are as in Step 1. In this step, we will show by induction that, for any $n \in \mathbb{N}$,
\begin{align} \label{eq:claimEN_induction}
 \frac{\mu(X_y^{(n)} \cap E_n )}{\mu(X_y^{(n)})} \geq 1 - \bigg(1- \frac{1}{2 C_d } \bigg)^n \quad \text{for all} \quad y \in Y_n.
\end{align}
For $n = 1$, the claim follows directly from \eqref{quotient_doubling}. Assume next that \eqref{eq:claimEN_induction} holds for some $n \in \mathbb{N}$.  For $y \in Y_{n+1}$,
set $Z_y^{(n)} := \{ z \in Y_n : X_z^{(n)} \subseteq X_y^{(n+1)} \setminus \widetilde{X}_y^{(n+1)}\}$. Then
\begin{align*}
 \mu(X_y^{(n+1)} \cap E_{n+1}) &\geq \mu(\widetilde{X}_y^{(n+1)}) + \mu ((X_y^{(n+1)} \setminus \widetilde{X}_y^{(n+1)} ) \cap E_{n+1}) \\
 &= \mu(\widetilde{X}_y^{(n+1)}) + \mu\big( \bigcup_{z \in Z_y^{(n)}} X_z^{(n)} \cap E_n \big) \\
 &= \mu(\widetilde{X}_y^{(n+1)}) + \sum_{ z \in Z_y^{(n)}} \mu(X_z^{(n)} \cap E_n ) \\
 &\geq  \mu(\widetilde{X}_y^{(n+1)}) + \sum_{ z \in Z_y^{(n)}} \mu(X_z^{(n)}) \bigg( 1 - \bigg( 1 - \frac{1}{2C_d} \bigg)^n \bigg) \\
 &= \mu(\widetilde{X}_y^{(n+1)}) + \bigg( \mu(X_y^{(n+1)}) - \mu (\widetilde{X}_y^{(n+1)}) \bigg) \bigg( 1 - \bigg( 1 - \frac{1}{2C_d} \bigg)^n \bigg) \\
 &\geq \mu(X_y^{(n+1)}) \bigg( \frac{1}{2C_d} + \bigg(1 - \frac{1}{2C_d} \bigg) \bigg( 1 - \bigg( 1 - \frac{1}{2C_d} \bigg)^n \bigg) \bigg) \\
 &= \mu(X_y^{(n+1)}) \bigg( 1 - \bigg( 1 - \frac{1}{2C_d} \bigg)^{n+1} \bigg),
\end{align*}
where the penultimate step used \eqref{quotient_doubling}. This proves \eqref{eq:claimEN_induction}.
\\~\\
\textbf{Step 3.}
For $n\in \N$ we define
 \[ 
 \widetilde{Y}^{({n})} :=\{y\in Y:  \exists y_{n} \in Y_{n}, \ X_y \subseteq \widetilde X^{(n)}_{y_n} \} .
 \] 
In this step, we set $\mu_0 := \mu$ and construct measures $\mu_n$, $n \in \mathbb{N}$, on $X$ that are density equivalent to $\mu$ and satisfy
\begin{align} \label{eq:measure_property1}
D^-_{\mu} (\Lambda) - \varepsilon  & \leq \frac{\# (X_y \cap \Lambda)}{\mu_n (X_y)} 
 \leq D_{\mu}^+ (\Lambda) + \varepsilon \qquad\text{for all } y \in \widetilde Y^{(n)},
\\
\label{eqm}
 \mu_n(X_{y}^{(n)}) &= \mu_{n-1}(X_{y}^{(n)})
\qquad\text{for all }y \in Y_n.
\end{align}
Suppose that we have constructed measures $\mu_0, \ldots, \mu_{n-1}$ for some $n\in\N$, which are mutually density equivalent.  We apply Lemma \ref{step} for the measure $\mu_{n-1}$, the pair of partitions $\{X_y\}_{y\in Y}$ and $\{X^{(n-1)}_y\}_{y\in Y_{n-1}}$, and the family of balls $\{B_{R_n+6R_{n-1}} (y)\}_{y \in Y_n}$.
This implies the existence of a measure $\mu_n$, which is density equivalent to $\mu_{n-1}$, such that \eqref{eq:measure_property1} holds and
\begin{align}
\label{eq:measure_property2}
\mu_n (X_y) &= \mu_{n-1}(X_y) \qquad\text{for all }y \in Y  \setminus \widetilde Y^{(n)},
\\
\mu_n(\widetilde{X}_{y}^{(n)}) & =\mu_{n-1}(\widetilde{X}_{y}^{(n)})\qquad\text{ for all }y\in Y_n.
\end{align}
Hence, for any $y\in Y_n$, we have
\[
\begin{aligned}
\mu_n(X_{y}^{(n)}) & = 
 \sum_{z \in  \widetilde{Y}^{(n)} : \ X_z \subseteq X_y^{(n)}} \mu_n(X_z)
+ \sum_{z \in Y \setminus \widetilde{Y}^{(n)} : \ X_z \subseteq X_y^{(n)}} \mu_n(X_z)
\\
& =\mu_n(\widetilde{X}_{y}^{(n)}) + \sum_{z \in Y \setminus \widetilde{Y}^{(n)} : \ X_z \subseteq X_y^{(n)}} \mu_n(X_z)
\\
&=\mu_{n-1}(\widetilde{X}_{y}^{(n)}) + \sum_{z \in Y \setminus \widetilde{Y}^{(n)} : \ X_z \subseteq X_y^{(n)}} \mu_{n-1}(X_z)
= \mu_{n-1}(X_{y}^{(n)}),
\end{aligned}
\]
which proves \eqref{eqm}.  Applying iteratively \eqref{eqm} and  the nestedness property \eqref{nestn}  yields 
\begin{equation}
\label{emm}
 \mu_n(X_{y}^{(n)}) = \sum_{z\in Y_{n-1}: \ X_z^{(n-1)} \subseteq X_y^{(n)}} \mu_{n-1}(X_{z}^{(n-1)}) = \ldots
 = \mu_{0}(X_{y}^{(n)}).
\end{equation}
for $y \in Y_n$.
\\~\\
\textbf{Step 4.} Lastly, we construct a required partition $\{X'_{y}\}_{y \in Y'}$ of $X$. Choose $n \in \mathbb{N}$ such that
\[
\bigg(1 - \frac{1}{2 C_d} \bigg)^n \le \varepsilon.
\]
Let  $R'=R_n$ and $X'_y=X^{(n)}_y$ for $y\in Y':=Y_n$. By \eqref{xy2} we have \eqref{xyh}. The nestedness property \eqref{nest} is a a consequence of \eqref{nestn}.
Finally, to prove \eqref{reg0}, define the set 
\[
\widetilde Y=\widetilde Y^{(1)} \cup \ldots \cup \widetilde Y^{(n)} = \{y \in Y: X_y \subseteq E_n\}.
\]
We claim that
\begin{equation}\label{reg6} 
 D^-_{\mu} (\Lambda) -\ve   \leq \frac{\# (X_y \cap \Lambda)}{\mu_n (X_y)} 
 \leq D_{\mu}^+ (\Lambda) + \ve \qquad\text{for all } y \in  \widetilde Y.
\end{equation}
Indeed, suppose that $y\in \widetilde{Y}$. Let $1\le k\le n$ be the largest integer such that $y\in \widetilde Y^{(k)}$. Since $\mu_k(X_y)=\mu_{k+1}(X_y) = \ldots = \mu_n(X_y)$, the bound \eqref{eq:measure_property1} with $n$ replaced by $k$ yields \eqref{reg6}.

Finally, take any $y\in Y_n$. By \eqref{eq:claimEN_induction} we have
\begin{equation}\label{reg7}
 \frac{\mu(X_y^{(n)} \cap (X\setminus E_n ) )}{\mu(X_y^{(n)})} \le \bigg(1- \frac{1}{2 C_d } \bigg)^n \le \ve.
\end{equation}
Consequently, by \eqref{eq:stephyp2} and \eqref{reg6} 
\[
\begin{aligned}
\#(X_y^{(n)} \cap \Lambda) &\le \sum_{z \in \widetilde Y: \ X_z \subseteq X_y^{(n)}} \#(X_z \cap \Lambda) +  \sum_{z \in Y \setminus \widetilde Y : \ X_z \subseteq X_y^{(n)}} \#(X_z \cap \Lambda) \\
& \le \sum_{z \in \widetilde Y: \ X_z \subseteq X_y^{(n)}}  (D^+_{\mu}(\Lambda)+\ve)\mu_n(X_z)  +   \sum_{z \in Y \setminus \widetilde Y : \ X_z \subseteq X_y^{(n)}} C_0 D^+_{\mu}(\Lambda) \mu_{n}(X_z ) \\
& = (D^+_{\mu}(\Lambda)+\ve)\mu_n(X_y^{(n)} \cap E_n) + C_0 D^+_{\mu}(\Lambda) \mu_0(X_y^{(n)} \cap (X \setminus E_n))
\\
& \le (D^+_{\mu}(\Lambda)+\ve)\mu_0(X_y^{(n)}) + C_0 D^+_{\mu}(\Lambda) \ve \mu_0(X_y^{(n)})
\\
&  = (D^+_{\mu}(\Lambda)(1+ C_0 \ve) + \ve )\mu(X_y^{(n)}),
\end{aligned}
\]
where the penultimate step follows from \eqref{emm} and \eqref{reg7}. Likewise, we show that
\[
\begin{aligned}
\#(X_y^{(n)} \cap \Lambda) 
& \ge   \sum_{z \in \widetilde Y: \ X_z \subseteq X_y^{(n)}}  (D^-_{\mu}(\Lambda) - \ve)\mu_n(X_z)
\\
& =
 (D^-_{\mu}(\Lambda) - \ve) ( \mu_n(X_y^{(n)} ) - \mu_n(X_y^{(n)} \cap (X \setminus E_n)))
\\
& =
 (D^-_{\mu}(\Lambda) - \ve) ( \mu_0(X_y^{(n)} ) - \mu_0(X_y^{(n)} \cap (X \setminus E_n)))
 \\
 & \ge  (D^-_{\mu}(\Lambda) - \ve) (1 - \ve)  \mu_0(X_y^{(n)} ) .
 \end{aligned} 
\]
Since $\ve>0$ is arbitrary, this shows \eqref{reg0}.
\end{proof}

\section*{Acknowledgements}
The authors thank Felix Voigtlaender for a helpful discussion on settling the claim in the proof of Theorem \ref{thm:frd_exponential}.
The first author was partially supported by the NSF grant DMS-2349756.
For J.~v.~V., this research was funded in whole or in part by the Austrian
Science Fund (FWF): 10.55776/J4555 and 10.55776/PAT2545623.

\bibliographystyle{abbrv}
\bibliography{bib}

\end{document}